\newtheorem{thm}[subsection]{Theorem}
\newtheorem{thm/def}[subsection]{Theorem/Definition}
\newtheorem{cor}[subsection]{Corollary}
\newtheorem{lem}[subsection]{Lemma}
\newtheorem{prop}[subsection]{Proposition}
\theoremstyle{definition}
\newtheorem{notation}[subsection]{Notation}
\newtheorem{defn}[subsection]{Definition}
\theoremstyle{definition}
\theoremstyle{definition}
\newtheorem{rem}[subsection]{Remark}
\newtheorem{example}[subsection]{Example}
\numberwithin{equation}{subsection}
\newtheorem{claim}[subsection]{Claim}
\newtheorem{pg}[subsection]{}
\newcommand{\R}{\mathrm R}
\newcommand{\rH}{\mathrm H}
\newcommand{\mc}{\mathcal }
\newcommand{\Z}{\mathrm{Z}}
\newcommand{\Sp}{\text{\rm Spec}}
\newcommand{\Alg}{\text{\rm Alg}}
\newcommand{\alg}{\text{\rm Alg}}
\newcommand{\dlog}{\text{\rm dlog}}
\newcommand{\mls}{\mathscr}
\newcommand{\ms}{\mathscr}
\newcommand{\id}{\mathrm{id}}
\newcommand{\red}{\text{\rm red}}
\newcommand{\pf}{\textit{pf}}
\newcommand{\ord}{\text{\rm ord}}
\DeclareMathOperator{\Br}{Br}
\DeclareMathOperator{\Spf}{Spf}
\newcommand{\cHom}{\mathscr{H}\!{\it om}}
\newcommand{\cSpec}{\mathscr{S}\!{\it pec}}
\newcommand{\Gb}{\mathscr{G}\!{\it b}}
\DeclareMathOperator{\Pic}{Pic}
\DeclareMathOperator{\Ext}{Ext}
\DeclareMathOperator{\salg}{salg}
\DeclareMathOperator{\Mod}{Mod}
\DeclareMathOperator{\Sch}{Sch}
\DeclareMathOperator{\sm}{sm}
\DeclareMathOperator{\sh}{sh}
\newcommand{\et}{\text{\rm \'et}}
\newcommand{\liset}{\text{\rm lis-\'et}}
\newcommand{\RHom}{\text{R}\mathscr{H}\!{\it om}}
\newcommand{\ch}{\text{\rm ch}}
\newcommand{\ani}{\mathrm{ani}}
\newcommand{\op}{\mathrm{op}}
\newcommand{\coh}{\mathrm{coh}}
\newcommand{\Aff}{\mathrm{Aff}}
\newcommand{\Ab}{\mathrm{Ab}}
\newcommand{\Pro}{\mathrm{Pro}}
\newcommand{\Ind}{\mathrm{Ind}}
\newcommand{\Shv}{\mathscr{S}\!\mathscr{h}\!\mathscr{v}}
\newcommand{\Shvf}{\Shv^{\circ}}
\newcommand{\D}{\mathrm{D}}
\newcommand{\C}{\mathrm{C}}
\newcommand{\I}{\mathrm{I}}
\newcommand{\rL}{\mathrm{L}}
\newcommand{\F}{\mathrm{F}}
\newcommand{\bA}{\mathbf{A}}
\newcommand{\bB}{\mathbf{B}}
\newcommand{\bC}{\mathbf{C}}
\newcommand{\bD}{\mathbf{D}}
\newcommand{\bF}{\mathbf{F}}
\newcommand{\bG}{\mathbf{G}}
\newcommand{\bH}{\mathbf{H}}
\newcommand{\bU}{\mathbf{U}}
\newcommand{\bV}{\mathbf{V}}
\newcommand{\bW}{\mathbf{W}}
\newcommand{\bZ}{\mathbf{Z}}
\newcommand{\bmu}{\boldsymbol{\mu}}
\newcommand{\balpha}{\boldsymbol{\alpha}}
\newcommand{\brho}{\boldsymbol{\rho}}
\newcommand{\lotimes}{\otimes ^{\mathbf{L}}}
\DeclareMathOperator{\fppf}{fppf}
\DeclareMathOperator{\Spec}{Spec}
\DeclareMathOperator{\Sym}{Sym}
\DeclareMathOperator{\Hom}{Hom}
\DeclareMathOperator{\Poly}{Poly}
\DeclareMathOperator{\cocone}{Cocone}
\DeclareMathOperator{\cone}{Cone}
\newcommand{\Bl}{\mathrm{Bl}}
\newcommand{\cts}{\mathrm{cts}}
\newcommand{\gap}{\rule{.25cm}{0.4pt}}
   \def\MR#1{}
\DeclareMathOperator*{\hocolim}{hocolim}
\DeclareMathOperator*{\holim}{holim}
\DeclareMathOperator{\sha}{sha}
\DeclareMathOperator{\res}{res}
\renewcommand{\leq}{\leqslant}
\renewcommand{\geq}{\geqslant}
\newcommand{\uLOmega}{\underline{\rL\Omega}}
\newcommand{\uLZOmega}{\underline{\rL\Z\Omega}}
\newcommand{\iso}{\xrightarrow{
   \,\smash{\raisebox{-0.45ex}{\ensuremath{\scriptstyle\sim}}}\,}}
\begin{document}

\title{Representability of cohomology of finite flat abelian group schemes}

\author{Daniel Bragg and Martin Olsson}


\begin{abstract}
We prove various finiteness and representability results for cohomology of finite flat abelian group schemes.  In particular, we show that if $f\colon X\rightarrow \Spec(k)$ is a projective scheme over a field $k$ and $\bG$ is a finite flat abelian group scheme over $X$ then $\R^nf_*\bG$ is an algebraic space for all $n$.  More generally, we study the derived pushforwards $\R^nf_*\bG$ for $f\colon  X\rightarrow S$ a projective morphism and $\bG$ a finite flat abelian group scheme over $X$.  We also define compactly supported cohomology for finite flat abelian group schemes, describe cohomology in terms of the cotangent complex for group schemes of height $1$, and prove higher categorical versions of our main representability results.
\end{abstract}

\maketitle

\setcounter{tocdepth}{1}
\tableofcontents

\section{Introduction}

\begin{pg}\label{pg:intro 1}
The results of this article have their origins in an attempt to understand the properties of the flat cohomology sheaves $\R^nf_*\bmu_p$ for a prime $p$ and a smooth proper morphism $f\colon X\rightarrow S$ with geometrically connected fibers. Here, $\R^nf_*\bmu_p$ denotes the $n$th higher direct image of the pushforward $f_*$ from the fppf site of $X$ to that of $S$, which may be realized as the fppf sheafification of the functor on $S$-schemes given by
\[
    T\mapsto\rH^n(X\times_ST,\bmu_p).
\]
If $p$ is everywhere invertible in $S$, then fundamental results in \'{e}tale cohomology imply that the flat cohomology groups of $\bmu_p$ are isomorphic to the corresponding \'{e}tale cohomology groups, and for each $n$ the cohomology sheaf $\R^nf_*\bmu_p$ is representable by a finite locally constant group scheme (see \ref{ex:example 0}). Without this assumption (e.g. if $S$ has characteristic $p$) the situation is more subtle. For low degrees we may understand the flat cohomology of $\bmu_p$ using the Kummer sequence
\[
    \begin{tikzcd}
        1\arrow{r}&\bmu_p\arrow{r}&\mathbf{G}_m\arrow{r}{\F}&\mathbf{G}_m\arrow{r}&1.
    \end{tikzcd}
\]
For $n=0$, the fact that the adjunction map $\ms O_S\to f_*\ms O_X$ is universally an isomorphism implies that $f_*\bG_m\simeq\bG_m$, and hence we have $\R^0f_*\bmu _{p}\simeq \bmu _{p}$. For $n=1$ we can observe that $\R^1f_*\mathbf{G}_{m}$ is simply the Picard functor $\mathrm{Pic}_{X/S}$, whose representability was shown by Artin \cite{Artin-algebraization}. From this and the Kummer sequence it follows that 
\[
    \R^1f_*\bmu_{p}\simeq\Pic_{X}[p]:=\ker(\cdot p\colon \mathrm{Pic}_{X/S}\rightarrow \mathrm{Pic}_{X/S}),
\]
and, in particular, Artin's result implies that $\R^1f_*\bmu_p$ is representable by a finite group scheme over $S$. However, in degrees $n\geq 2$, the flat cohomology groups $\R^nf_*\bG_m$ are typically not representable. Thus, the Kummer sequence is of little help, and not much seems to have been known concerning the representability of the $\R^nf_*\bmu_p$ in higher degrees, even in the case when the target $S$ is the spectrum of a field.
\end{pg}

\begin{pg} 
In this article we generalize these representability results for $\R^nf_*\bmu_{p}$ in three directions:
\begin{itemize}
     \item We consider all integers $n\geq 0$.
     \item We replace $\bmu_{p}$ by a general commutative finite locally free  group scheme over $X$.
     \item We relax the assumptions on $f$.
\end{itemize}

\end{pg}
\begin{pg}{\bf Statements of results.}
Taking inspiration from the theory of \'{e}tale cohomology and its presentation by Deligne \cite{MR463174}, we consider two sorts of representability results. The first are \emph{generic representability results}, which require minimal assumptions on the morphism $f$, namely only projectivity or properness, but only conclude representability after passing to a dense open subset of the base.
\end{pg}


\begin{thm}[Generic representability of flat cohomology]
\label{T:1.3}
Let $f\colon X\to S$ be a projective morphism of noetherian schemes of characteristic $p>0$ with $S$ reduced and let $\bG$ be a commutative finite flat group scheme over $X$. For each $n\geq 0$ there exists a dense open subscheme $U\subset S$ such that $(\R^nf_*\bG)|_U$ is representable
by a group scheme that is flat, affine, and of finite type over $U$.
\end{thm}

We remark that, in the situation of Theorem \ref{T:1.3}, the functors $\R^nf_*\bG $ need not be representable over the entire base scheme $S$, even in very nice situations. For example, if $f\colon X\to S$ is a suitably varying family of K3 surfaces, then $\R^3f_*\bmu_p$ need not be representable (see Example \ref{ex:non representable cohomology in families}). Thus, the shrinking in Theorem \ref{T:1.3} is in general necessary for representability to hold. 

\begin{cor}[Constructibility of flat cohomology]
\label{thm:constructibility of flat cohomology}
    With the assumptions of \ref{T:1.3}, for each integer $n\geq 0$, the flat cohomology sheaf $\R^nf_*\bG$ has the property that there exists a finite partition $S=\bigsqcup_i Z_i$ of $S$ into reduced locally closed subschemes $Z_i\subset S$ such that for each $i$ the restriction of $\R^nf_*\bG$ to $Z_i$ is representable by a group scheme that is flat, affine, and of
    finite type over $Z_i$.
\end{cor}
\begin{proof}
This follows from \ref{T:1.3} and noetherian induction.
\end{proof}



\begin{cor}\label{C:1}
    Let $X$ be a projective scheme over a field $k$ of characteristic $p>0$ and let $f\colon X\rightarrow\Spec(k)$ denote the structure morphism. If $\bG$ is a commutative finite flat group scheme over $X$, then for each integer $n\geq 0$ the functor $\R^nf_*\bG$ is representable by
    an affine finite type $k$-group scheme.
\end{cor}
\begin{proof} This follows from specializing Theorem \ref{T:1.3} to the case when the base is the spectrum of a field.
\end{proof}

\begin{cor}\label{C:finiteness over a finite field}
    Let $X$ be a projective scheme over a finite field $k$. If $\bG$ is a commutative finite flat group scheme over $X$, then for all $n\geq 0$ the flat cohomology group $\rH^n(X,\bG)$ is finite.
\end{cor}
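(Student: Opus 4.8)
The plan is to deduce this from Corollary \ref{C:1} together with the finiteness of fppf cohomology of commutative group schemes over a finite field. Write $k=\mathbb{F}_q$ and let $f\colon X\to \Sp (k)$ be the structure morphism. Since $k$ is finite, hence perfect, Corollary \ref{C:1} applies: each $R^qf_*G$ is representable by a finite type commutative group scheme over $k$ which admits a finite filtration whose successive quotients are either finite commutative group schemes or $k$-forms of vector bundles. The flat cohomology of $X$ is computed by the Leray spectral sequence for $f$ in the fppf topology,
\[
E_2^{p,q}=H^p(\Sp (k),R^qf_*G)\ \Longrightarrow\ H^{p+q}(X,G).
\]
This is a first-quadrant spectral sequence, so $H^i(X,G)$ carries a finite filtration whose graded pieces are subquotients of the finitely many groups $E_2^{p,q}$ with $p+q=i$. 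It therefore suffices to show that each $E_2^{p,q}=H^p(\Sp (k),R^qf_*G)$ is finite. Feeding the short exact sequences coming from the filtration of $R^qf_*G$ into the long exact sequence of fppf cohomology and inducting on the length of the filtration, I reduce this to proving that $H^p(\Sp (k),H)$ is finite when $H$ is either a finite commutative group scheme over $k$ or a $k$-form of a vector bundle. (An extension of finite groups is finite, and in a long exact sequence the middle term is finite as soon as its two neighbours are.)

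For the two building blocks I argue separately. If $H$ is a finite flat commutative group scheme over the finite field $k$, then $H^p(\Sp (k),H)$ is finite for every $p$, and in fact vanishes for $p\geq 2$; this is classical (see, e.g., Milne's \emph{Arithmetic Duality Theorems} and Shatz's work on flat cohomology over local and finite fields). If instead $H$ is a $k$-form of a vector bundle, then $H$ is smooth, since smoothness is fppf-local and $H$ is fppf-locally a vector group, and $H$ is geometrically connected, since it becomes $\mathbf{G}_a^n$ over $\bar k$. Because $H$ is smooth, its fppf and \'etale cohomology coincide, so $H^p(\Sp (k),H)\cong H^p(\mathrm{Gal}(\bar k/k),H(\bar k))$. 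As $\mathrm{Gal}(\bar k/k)$ has cohomological dimension $1$, these groups vanish for $p\geq 2$; the group $H^0(\Sp (k),H)=H(k)$ is finite because $H$ is of finite type over the finite field $k$; and $H^1$ vanishes by Lang's theorem, which guarantees that a smooth connected group scheme over a finite field has no nontrivial torsors. In every case $H^p(\Sp (k),H)$ is finite, which completes the argument.

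The one point that genuinely uses more than \'etale-cohomological input is the infinitesimal ($p$-torsion) part of the finite building blocks, where one must invoke the fppf finiteness statement for finite flat group schemes over $k$ rather than its \'etale analogue; the forms of vector bundles are handled cleanly by Lang's theorem and the cohomological dimension of a finite field. Once these classical facts are assembled, the corollary is a formal consequence of Corollary \ref{C:1} and the convergence of the Leray spectral sequence, requiring no further geometric input.
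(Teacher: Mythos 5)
Your argument is correct and is essentially the derivation the paper intends: the corollary is stated as a direct consequence of Corollary \ref{C:1} (with no separate proof given), obtained exactly as you do via the fppf Leray spectral sequence for $X\to\Sp(k)$ and the finiteness of $H^p(\Sp(k),H)$ for the two building blocks --- finite flat group schemes (classical flat-cohomology finiteness over a finite field) and forms of vector groups (smoothness, Lang's theorem, and $\mathrm{cd}(\mathrm{Gal}(\bar k/k))=1$). Your handling of the filtration from \ref{C:1} by induction through long exact sequences is the right bookkeeping, so there is nothing to add.
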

\begin{proof}
Let $f\colon X\to\Spec k$ denote the structural morphism and consider the Leray spectral sequence
\[
    \rH^a(\Spec k,\R^bf_*\bG)\implies\rH^{a+b}(X,\bG).
\]
By \ref{C:1} each of the flat cohomology sheaves $\R^bf_*\bG$ is representable by an affine finite type $k$-group scheme. To prove the result it will therefore suffice to show that if $\bA$ is a commutative affine finite type group scheme over $k$ then the flat cohomology groups $\rH^n(\Spec k,\bA)=\rH^n((\Spec k)_{\fppf},\bA)$ are finite. Indeed, if $\epsilon\colon(\Sp (k))_{\fppf}\rightarrow (\Sp (k))_\et $ is the projection from the fppf topos to the small \'etale topos of $\Spec(k)$ then it is well known that $\R^i\epsilon_*\bA=0$ for $i>0$ (this uses that $k$ is perfect, see for example \cite[2.8]{Milne76}).  It follows that in this case we have $\rH ^n(\Sp (k), \bA) = \rH ^n(G_k, \bA(\bar k))$, where $G_k$ denotes the absolute Galois group of $k$ and $\bar k$ is an algebraic closure of $k$.  In particular, if $\F_k\in G_k$ is the Frobenius morphism then we see that $\rH ^n(\Sp (k), \bA)$ is calculated by the complex
\[
    \begin{tikzcd}
        \bA(\overline{k})\arrow{r}{\F_k-1}&\bA(\overline{k}).
    \end{tikzcd}
\]
By Lang's theorem \cite{MR86367} the map $\F _k-1$ is surjective on the points of the connected component of the identity; in particular, the kernel and cokernel of this map is finite and we conclude that $\rH ^n(\Sp (k), \bA)$ is finite for all $n$ (and vanishes for $n>1$).
\end{proof}

\begin{rem}
    In the situation of \ref{C:1}, if $k$ is algebraically closed then the group of $k$-points $(\R^nf_*\bG)(k)$ is identified with the flat cohomology group $\rH^n(X,\bG)$. Thus, $\rH^n(X,\bG)$ is finite if and only if $\R^nf_*\bG$ is represented by a zero-dimensional group scheme, and is infinite if and only if $\R^nf_*\bG$ is represented by a positive dimensional group scheme. For examples where the latter case occurs, see Examples \ref{ex:cohomology of alpha_p} and \ref{ex:k3 surfaces}.
\end{rem}

\begin{pg}
The other sort of representability results we consider are \emph{global representability results}, which hold over the entire base. As indicated above, even in very nice situations we cannot expect algebraicity of the individual cohomology sheaves over the entire base scheme. Instead, we consider representability properties of the complex $\R f_*\bG$, which is better behaved than the individual cohomology sheaves. For an algebraic space $S$ we write $\D(S):=\D(S_{\fppf})$ for the derived category of the category of sheaves of abelian groups on the big fppf site of $S$. For technical reasons, in the rest of this article we will prefer to work with the derived $\infty$-category $\mls D(S):=\mls D(S_{\fppf})$, but the triangulated category will suffice to state our results here. We define in \S\ref{S:section10} the notion of a \emph{stably algebraic complex} over $S$; this is a complex $\bC\in\D(S)$ with the property that for some sufficiently large integer $n$ the shift $\bC[n]$ corresponds under the Dold--Kan correspondence to an algebraic $\infty$-stack that is flat and locally of finite presentation over $S$ (we discuss the theory of algebraic $\infty $-stacks in section \ref{S:section10}). The basic examples of stably algebraic complexes are perfect complexes of vector bundles on $S$ and the complexes $\bG[j]$ for flat locally of finite presentation group schemes $\bG$ over $S$ and integers $j$.
\end{pg}

\begin{thm}[Global representability of flat cohomology]
\label{T:strongform}
Let $f\colon X\rightarrow S$ be a proper finitely presented morphism of algebraic spaces of characteristic $p>0$ and let $\bG$ be a commutative finite locally free group scheme over $X$. Suppose that (at least) one of the following holds.
\begin{enumerate}
    \item\label{item:coho etale ell} $f$ is smooth and $\bG$ is \'{e}tale.
    \item\label{item:coho AM} $f$ is flat and $\bG$ has coheight 1 (that is, the Cartier dual $\bG^{\vee}$ has height 1).
    \item\label{item:coho height 1} $f$ is flat and lci and $\bG$ has height 1.
\end{enumerate}
Then the flat cohomology $\R f_*\bG\in\D(S)$ is stably algebraic and is finitely presented over $S$. Furthermore, there exists a Zariski open cover $S'\to S$ such that the pullback $(\R f_*\bG)|_{S'}\in\D(S')$ is contained in the full triangulated subcategory of $\D(S')$ generated by commutative finite locally free group schemes and vector groups over $S'$.
\end{thm}

The proof of \ref{T:strongform} is given in section \ref{sec:algebraicity of cohomology}. We remark that it is only case~\eqref{item:coho height 1} that is essentially new; the results in cases~\eqref{item:coho etale ell} and~\eqref{item:coho AM} are straightforward consequences of standard results. Applying some general results on the representability of the cohomology sheaves of a stably algebraic complex (see \ref{prop:all coho flat, with fp 2}, \ref{cor:coho sheaves of a stably alg complex}, and \ref{cor:generic representability of stably alg complex}) we deduce the following.

\begin{cor}\label{cor:global rep corollary}
    Assume the notation of \ref{T:strongform}, and suppose that at least one of the given sets of conditions in \ref{T:strongform} hold. If $n$ is an integer such that the sheaves $\R^if_*\bG$ are representable and flat over $S$ for all $i<n$, then $\R^nf_*\bG$ is representable by a group algebraic space that is finitely presented over $S$.
    In particular, $f_*\bG$ is representable.
\end{cor}

\begin{rem}
We expect strengthenings of our results in several directions, though this seems to require new ideas and methods. First, we expect that Theorem \ref{T:1.3} should hold with the projective hypothesis on $f$ relaxed to merely proper and of finite presentation, and similarly for its corollaries \ref{thm:constructibility of flat cohomology}, \ref{C:1}, and \ref{C:finiteness over a finite field} (and further should hold for $X$ an algebraic space). Second, we expect that \ref{T:1.3}, \ref{thm:constructibility of flat cohomology}, and \ref{T:strongform} should hold without the assumption that the base $S$ has characteristic $p$. In particular, they should hold for a base scheme of mixed characteristic. Finally, we expect that the conclusion of \ref{T:strongform} should hold under more general sets of conditions on $\bG$ and $f$ than those given.
\end{rem}

\begin{rem}
    As we work with big sites, our flat cohomology sheaves are tautologically compatible with arbitrary base change.
\end{rem}

\begin{rem}\label{rem:etale case of main theorems}
    The conclusions of the above results \ref{T:1.3}, \ref{thm:constructibility of flat cohomology}, \ref{C:1}, and \ref{T:strongform} also hold when the base scheme has characteristic 0, or more generally when $\bG$ is finite \'{e}tale and has order everywhere invertible in $S$. Indeed, under these assumptions \ref{T:1.3} follows from Deligne's generic constructibility theorem \cite[Finitude, Th\'{e}or\`{e}me 1.9]{MR463174}, while \ref{T:strongform} follows from the smooth and proper base change theorems and the local acyclicity of smooth morphisms (see e.g. \cite[0GKD]{stacks-project}).
\end{rem}

\begin{rem}
    It follows from fundamental theorems on the proper pushforward of coherent sheaves that the conclusion of \ref{C:1} holds also if $\bG$ is a vector group over $X$. In fact, all of the above results, namely \ref{T:1.3}, \ref{thm:constructibility of flat cohomology}, \ref{C:finiteness over a finite field}, \ref{T:strongform}, and \ref{cor:global rep corollary}, hold in this case as well, and more generally the same is true if $\bG$ is an iterated extension of finite locally free group schemes and vector groups. 
    It is natural to ask what is the largest class of group schemes $\bG$ for which we should expect these sorts of results. We note that some assumptions on $\bG$ are necessary: for example, if $f\colon X\to\Spec k$ is a smooth proper variety over a field, then $\R^nf_*\bG_m$ is representable for $n=0,1$, but need not be representable for $n\geq 2$.
\end{rem}

\begin{rem}\label{rem:strenghtening result 2}
    Under some extra assumptions, we can use the global representability result of \ref{T:strongform} (which requires only properness) to weaken the projectivity hypothesis in \ref{T:1.3}, \ref{C:1}, and \ref{C:finiteness over a finite field} to merely properness. For example, if $f\colon X\to S$ is a proper  morphism of finite presentation of algebraic spaces of characteristic $p$ with $S$ a reduced noetherian scheme and $\bG$ is a commutative group scheme over $X$ that is an iterated extension of the group schemes in the various sets of conditions of \ref{T:strongform}, then by \ref{T:strongform} the cohomology $\R f_*\bG$ is stably algebraic. Applying some general results \ref{cor:generic representability of stably alg complex} on the algebraicity of the cohomology sheaves of a stably algebraic complex we obtain that the conclusion of \ref{T:1.3} holds for the cohomology of $\bG$. In particular, if $S=\Spec k$ is the spectrum of a field, then the conclusion of \ref{C:1} holds, and if $k$ is moreover finite, then the conclusion of \ref{C:finiteness over a finite field} holds.
\end{rem}

\begin{pg}{\bf Methods of proof.}\label{pg:methods of proof}
We outline our approach to the above results and the key technical inputs, some of which may be of independent interest. Over an algebraically closed field $k$ of positive characteristic $p$ there is a good understanding of the structure of commutative finite flat group schemes: the category of such group schemes is abelian, and has simple objects
\[
    \bmu_p,\,\balpha_p,\,\bZ/p,\,\text{and}\,\,\mathbf{Z}/\ell\,\,\,\,(\text{for prime }\ell\neq p).
\]
This follows from Dieudonn\'e theory \cite[Th\'eor\`eme 1 (iii)]{Fontaine}. We first explain our approach to understanding flat cohomology in these cases. The cohomology of $\bZ/\ell$ is well understood from the theory of \'{e}tale cohomology, so we focus here on $\bmu_p$, $\balpha_p$, and $\bZ/p$.
\end{pg}

Our representability results rely ultimately on relating finite flat group schemes to coherent sheaves, and hence flat cohomology to coherent cohomology. In some cases this is straightforward. For example, let $k$ be a field of characteristic $p$ and let $f\colon X\to\Spec k$ be a proper morphism. To understand the cohomology of $\balpha_p$, we may consider the exact sequence
\begin{equation}\label{eq:alpha_p sequence, first appearence}
        0\to\balpha_p\to\bG_a\xrightarrow{\F}\bG_a\to 0
\end{equation}
of group schemes over $X$. Applying $\R f_*$ we obtain the long exact sequence
\[
    \ldots\to\R^{n-1}f_*\bG_a\xrightarrow{\F}\R^{n-1}f_*\bG_a\to\R^nf_*\balpha_p\to\R^nf_*\bG_a\xrightarrow{\F}\R^nf_*\bG_a\to\ldots.
\]
 By flat base change \cite[02KH]{stacks-project}, the usual Zariski cohomology groups $\rH^n(X,\mls O_X)$ are compatible with arbitrary base change, in the sense that for any $k$-scheme $T$ the natural map $\rH^n(X,\mls O_X)\otimes_k\mls O_T\to\rH^n(X_T,\mls O_{X_T})$ is an isomorphism. This yields a natural isomorphism
    \[
        \bV(\rH^n(X,\mls O_X)):=\Spec(\Sym^{\ast}\rH^n(X,\mls O_X)^{\vee})\simeq\R^nf_*\bG_a
    \]
of functors, where the left side denotes the geometric vector bundle associated to the $k$-vector space $\rH^n(X,\mls O_X)$. Thus, the cohomology sheaves $\R^nf_*\bG_a$ are representable, and combined with the above long exact sequence this shows that the flat cohomology sheaves $\R^nf_*\balpha_p$ are representable. Similar arguments suffice to understand the cohomology of $\bZ/p$ via the Artin--Schreier sequence
\begin{equation}\label{eq:Z_p sequence, first appearence}
    0\to\bZ/p\to\bG_a\xrightarrow{\F-1}\bG_a\to 0.
\end{equation}
In families an additional complication arises owing to the fact that the Zariski cohomology of $\mls O_X$ may jump. Accordingly, the flat cohomology sheaves of $\bG_a$ and $\balpha_p$ need only be representable when restricted to a locally closed stratification of the base. 


More generally, with the above idea one can understand the flat cohomology of any finite flat group scheme that can be resolved by vector groups. This was observed by Artin and Milne \cite[Corollary 1.4]{ArtinMilne}. This notably does not apply to $\bmu_p$, which does not admit any nontrivial morphism to a vector group. We outline our approach in this more subtle case.
We first consider flat cohomology over affine schemes. Let $P$ be a smooth $\bF_p$-algebra. The cohomology of $\bmu_p$ over $P$ can be understood using a certain distinguished triangle
\begin{equation}\label{eq:intro hoobler for mup}
    \R\Gamma(\Spec P,\bmu_p)[1]\to\F_{P*}\Z\Omega^1_P\xrightarrow{\C-\widetilde\iota}\Omega^1_P\xrightarrow{+1}
\end{equation}
in the derived category of abelian groups $\D(\Ab)$, where $\C$ is the $\mls O_X$-linear Cartier operator and $\widetilde\iota$ is the $p$-linear map given locally by $s\omega\mapsto s^p\omega$ (more generally, there is a similar triangle when $\bmu_p$ is replaced with an arbitrary finite flat height 1 group scheme; see \ref{cor:hoobler for smooth algebras}). In particular, this sequence relates the flat cohomology of $\bmu_p$ over $P$ to certain coherent sheaves on $\Spec P$. However, due to the restriction that $P$ be smooth, this result is not strong enough for our applications to representability results, even in simple cases. For example, consider a smooth projective scheme $X$ over $\bF_p$ with structure morphism $f\colon X\to\Spec\bF_p$. To understand the sheaves $\R^nf_*\bmu_p$ one needs to control the groups $(\R^nf_*\bmu_p)(A)$ of $A$-points for arbitrary $\bF_p$-algebras $A$, including, for instance, infinitesimal thickenings of $\bF_p$. Even though $X$ is smooth, the above sequence gives information on these groups only when $A$ is smooth. To control flat cohomology over arbitrary $\bF_p$-algebras, we use the fact that the functor 
\[
    A\mapsto\R\Gamma(\Spec A,\bmu_p)
\]
on $\bF_p$-algebras has the remarkable property of being determined by its values on polynomial $\bF_p$-algebras by left Kan extension. This result is due to Bhatt and Lurie (private communication), and had also been observed by Scholze. We remark that this property is quite special to this situation; for instance, it does not hold if we replace $\bmu_p$ with $\bmu_{\ell}$ for a prime $\ell\neq p$. As a consequence, for an arbitrary $\bF_p$-algebra $R$ there is a canonical triangle
\[
        \R\Gamma(\Spec R,\bmu_p)[1]\to\F_{R*}\rL\Z\Omega^1_R\xrightarrow{\C-\widetilde \iota}\rL\Omega^1_R\xrightarrow{+1}
\]
in $\D(\Ab)$, where $\rL\Omega^1_R$ is the cotangent complex and $\rL\Z\Omega^1_R$ is a derived form of the sheaf of closed $1$-forms. In particular, this sequence relates the flat cohomology of $\bmu_p$ to complexes of coherent sheaves. In this paper we prove a generalization of these results in which $\bmu_p$ is replaced with a general commutative finite flat height 1 group scheme.

\begin{thm}\label{thm:flat cohomology is Kan extended from polynomial algebras}
    Let $R$ be a ring of characteristic $p$ and let $\bG$ be a commutative finite locally free height 1 group scheme over $R$. The functor $\Alg_R\to\mls D(\Ab), A\mapsto \R\Gamma (\Spec A, \bG_A)$ is left Kan extended from the subcategory $\Poly_R\subset\Alg_R$ of finitely generated polynomial $R$-algebras.
\end{thm}

\begin{thm}[Derived Hoobler sequence]\label{thm:derived hoobler}
    Let $R$ be a ring of characteristic $p$ and let $\bG$ be a commutative finite locally free height 1 group scheme over $R$ with Lie algebra $\mls L$ and $p$th power map $\widetilde{\rho} \colon \mls L\to\mls L$. There is a canonical distinguished triangle
    \[
        \R\Gamma(\Spec R,\bG)[1]\to\mls L\otimes^{\rL}_R\F_{R*}\rL\Z\Omega^1_R\xrightarrow{\C-\widetilde \rho}\mls L\otimes^{\rL}_R\rL\Omega^1_R\xrightarrow{+1}
    \]
    in $\mls D(\Ab)$. This triangle is determined up to canonical isomorphism by the requirements that it is functorial in $R$ and $\bG$, and that it agrees with the triangle~\eqref{eq:triangle in smooth case} when $R$ is smooth over $\bF_p$.
\end{thm}

The proofs of \ref{thm:flat cohomology is Kan extended from polynomial algebras} and \ref{thm:derived hoobler} are given in \S\ref{sec:flat coho via Kan}. We also show that both these results generalize further to the animated setting (see \ref{thm:coho is continuous} and \ref{rem:derived hoobler sequence}). In \S\ref{sec:algebraicity of cohomology} we use Theorem \ref{thm:derived hoobler} and standard results on flat proper pushforwards of perfect complexes to prove the global representability result of Theorem \ref{T:strongform}. As a further application of \ref{thm:derived hoobler}, we prove in \S\ref{sec:flat coho via Kan} the following projective bundle formula for flat cohomology.

\begin{thm}[Projective bundle formula for flat cohomology]\label{thm:proj bundle formula}
Let $X$ be an algebraic space over $\bF_p$ and let $\mls E$ be a locally free sheaf on $X$ with associated projective bundle $\pi \colon Y := \mathbf{P}(\mls E)\rightarrow X$. Let $\bG_X$ be a commutative finite locally free height 1 group scheme on $X$ with Lie algebra $\mls L$ and $p$-linear $p$th power map $\widetilde{\rho}\colon\mls L\to\mls L$. There is a canonical decomposition
    \[
        \R\pi_*\bG_Y\simeq \bG_X\oplus\bH_X[-2]
    \]
in $\D(X)$, where $\bG_Y:=\pi^{-1}\bG_X$ and $\bH_X$ is the group scheme on $X$ defined by
\[
    \bH_X=\ker\left(\boldsymbol{1}-\widetilde{\brho}:\bV(\mls L)\to\bV(\mls L)\right).
\]
Thus, for each $n$ we have a canonical isomorphism
\[
    \rH^n(Y,\bG_Y)\simeq\rH^n(X,\bG_X)\oplus\rH^{n-2}(X,\bH_X).
\]
\end{thm}

Here, $\widetilde{\brho}\colon\bV(\mls L)\to\bV(\mls L)$ denotes the homomorphism of vector groups induced by the $p$-linear map $\widetilde{\rho}$ of coherent sheaves (see \ref{pg:p linear map of vector bundles}).

\begin{example}
    For $\bG_X=\balpha_{p,X}$, we have $\widetilde{\rho}=0$, and so $\boldsymbol{1}-\widetilde{\brho}$ is an isomorphism and $\bH_X$ is trivial. Thus, \ref{thm:proj bundle formula} implies that the pullback map induces an isomorphism
        \[
            \R\pi_*\balpha_{p,Y}\simeq\balpha_{p,X}.
        \]
    For $\bG_X=\bmu _{p,X}$, we have $\bH_X\simeq(\mathbf{Z}/p)_X$, and so \ref{thm:proj bundle formula} gives a decomposition
    \[
        \R\pi_*\bmu_{p,Y}\simeq\bmu_{p,X}\oplus(\mathbf{Z}/p)_X\,[-2].
    \]
\end{example}

We now describe our approach to generic representability, and in particular Theorem \ref{T:1.3}.
We take as input Theorem \ref{T:strongform}, which implies in particular the special case of \ref{T:1.3} when $f\colon X\to S$ is smooth and projective and $\bG$ is equal to one of $\bmu_p$, $\balpha_p$, $\bZ/p$, or $\bZ/\ell$ for a prime $\ell\neq p$. We then reduce the general statement to these cases using alterations together with a certain devissage of the coefficient sheaf. The latter is not straightforward, however, as one cannot directly reduce to these cases when working with families of finite locally free group schemes. For example, one can have a finite flat group scheme $\bG$ over a curve $C$ whose restriction $\bG|_U$ to a dense open $U\subset C$ is isomorphic to $\bmu_p$, but whose fiber over $x\in C\setminus U$ is isomorphic to $\balpha_p$.  Similarly, one can construct degenerations of $\mathbf{Z}/p$ to $\balpha_p$.

To carry out this devissage we make systematic use of the now-standard methods of derived algebraic geometry and animated rings \cite{CS}. We review this theory in \S\ref{S:section4}. For our purposes, the main use of these methods is to allow a comparison of flat cohomology over a scheme and its formal completion along a closed subscheme. This is closely related to a theory of compactly supported flat cohomology, and our work in this regard is inspired by the theory of compactly supported cohomology for coherent sheaves (developed in \cite[Appendix]{RandD} and \cite{Hartshornecompact}). We prove \ref{T:1.3} in \S\ref{S:section8.5} as a consequence of a similar result for animated cohomology (Theorem \ref{T:8.1}). Our argument proving the latter consists of an induction on the relative dimension of the morphism $f\colon X\to S$, with inductive step involving the formation of suitably chosen alterations, derived blow-ups, and devissage of the coefficient sheaf to reduce to the cases covered by Theorem \ref{T:strongform}.

\begin{pg}\label{P:1.12}{\bf Relation with prior work.} We note some special cases in which our representability results were already known. Some of these results require weaker hypothesis than in   our general theorems, for example requiring only properness hypothesis or allowing the base to be of mixed characteristic.

\end{pg}

\begin{example}\label{ex:example 0}
    Let $f\colon X\to S$ be a smooth proper morphism of algebraic spaces and let $\bG$ be a commutative finite locally free group scheme over $X$. If $\bG$ is \'{e}tale and has order everywhere invertible, then it follows from the smooth proper base change theorem in \'{e}tale cohomology that for each $n$ the flat cohomology sheaf $\R^nf_*\bG$ is represented by a finite \'{e}tale group scheme over $S$. If $\bG$ admits a resolution by vector groups, then all of our results on the flat cohomology sheaves $\R^nf_*\bG$ are straightforward consequences of standard results on coherent cohomology
\end{example}

\begin{example}\label{ex:example 1}
    Let $f\colon X\to S$ be a flat, proper, and finitely presented morphism of schemes such that the adjunction map $\ms O_S\to f_*\ms O_X$ is an isomorphism and remains so after arbitrary base change. Let $\bG_S$ be a commutative finite locally free group scheme over the base $S$ and write $\bG_X:=f^{-1}\bG_S$. Using some standard results one can deduce that $\R^nf_*\bG_X$ is representable for $n=0,1$, as we explain. For $n=0$, we claim that in fact the adjunction map $\bG_S\to f_*\bG_X$ is an isomorphism. Indeed, if $T$ is an $S$-scheme then we have $f_{T*}\mls O_{X_T} = \mls O_T$ by our assumptions on the fibers of $f$, and so the map
    \[
        \bG_S(T) = \cHom_S(T,\bG_S)\rightarrow \cHom_S(X_T,\bG_S) = (f_*\bG_X)(T)
    \]
    is an isomorphism, since $\bG_S$ is affine.
    For $n=1$, a result of Raynaud \cite[6.2.1]{MR282993} shows that there is a canonical isomorphism
    \[
        \R^1f_*\bG_X\simeq\cHom_{S\text{-grp}}(\bG_S^{\vee},\Pic_{X/S})
    \]
    where $\bG_S^{\vee}$ is the Cartier dual of $\bG_S$. Combining the representability of the Picard scheme with standard results on the representability of the hom-functor, it follows that the right side and hence $\R^1f_*\bG_X$ is representable.
\end{example}


\begin{example}\label{ex:example 4}
    In the case when $q$ is a nonzero integer not invertible on the base, there is to the best of our knowledge only one nontrivial case besides the above in which the representability of $\R^nf_*\bmu_q$ for any $n\geq 2$ was previously known. This is the result, due to Max Lieblich and the first author of this article, that if $f\colon X\to S$ is a smooth proper morphism of relative dimension 2 with geometrically connected fibers whose degree 1 cohomology satisfies certain vanishing conditions, then for any nonzero integer $q$ the flat cohomology $\R^2f_*\bmu_q$ is representable \cite[Theorem 1.2]{10.1093/imrn/rnab334}. These vanishing conditions hold, for instance, for a family of K3 surfaces. We note that this result allows $S$ to be of mixed characteristic, and so is more general than the results considered in this article. The proof relies on results on moduli spaces of Azumaya algebras on surfaces, and does not generalize to higher dimensions. We remark that, when $f\colon X\to\Spec k$ is a projective surface over a field of characteristic $p$, the representability of $\R^nf_*\bmu_p$ for all $n$ was claimed by Artin \cite[Theorem 3.1]{ArtinSSK3}, with the caveat that the proof would appear elsewhere, although to the best of our knowledge it did not.
\end{example}

\begin{rem}
    Let $f\colon X\to\Spec k$ be a smooth proper morphism where $k$ is a perfect field of characteristic $p$. There is a significant literature studying the \emph{perfection} of the cohomology sheaves $\R^nf_*\bmu_p$, that is, the restriction of the functor $\R^nf_*\bmu_p$ to the category of perfect $k$-schemes. In case $\R^nf_*\bmu_p$ is representable by a group scheme say $\bH$, then the perfection of the functor $\R^nf_*\bmu_p$ is represented by the perfection of $\bH$, in the usual sense. The representability of the perfection of $\R^nf_*\bmu_p$ by a perfect group scheme turns out to be significantly easier, and was shown, for instance, in \cite[2.7]{Milne76}.
\end{rem}

\begin{rem}
 A key role in this article is played by the Hoobler sequence (see \ref{thm:derived hoobler}) relating the cohomology of height $1$ group schemes to coherent cohomology.  More generally, there should be a similar relationship between the cohomology of other group schemes, most notably $\mathbf{Z}/(p^n)(i)$ for various $i$ and $n$, relating flat cohomology to prismatic cohomology, as well as results over mixed-characteristic bases.  Recently Mondal and Madapusi have announced results in this direction.
\end{rem}
\begin{rem}In this article we develop a theory of compactly supported flat cohomology sufficient for the applications discussed above, but a more complete treatment of this theory would be useful;  in particular, establishing independence of compactifications in greater generality, making precise the relationship with Deligne and Hartshorne's theory for coherent sheaves (and more recently the theory of Clausen and Scholze), and considering the theory in mixed characteristic. 
\end{rem}
\begin{rem} We do not discuss duality for compactly supported cohomology as in the classical work of Artin and Milne \cite{ArtinMilne, Milne76}.  This is closely related to developing  a theory of $f^!$ for flat cohomology.
\end{rem}




\begin{pg}{\bf Organization of the article.}  In section \ref{S:section2new} we set up and review basic notation and results about derived categories, sheaves, and group schemes.   Sections \ref{sec:flat coho via Kan} and \ref{sec:proof of kan ext result} are devoted to the key technical result that the cohomology of height 1 group schemes can be described by Kan extension from the cohomology of smooth algebras. Combining this with results for smooth algebras we then obtained the Hoobler sequence relating cohomology of height 1 group schemes to coherent data.  In section \ref{S:section10} we develop a general formalism of algebraic complexes modeled on Simpson's work on algebraic $n$-stacks \cite{Simpson}. Section \ref{sec:algebraicity of cohomology} is devoted to the proofs of \ref{T:1.3} and \ref{T:strongform} for height $1$ group schemes and proper lci morphisms. In section \ref{S:section4} we provide some technical foundations concerning animated rings and sheaves on them. In sections \ref{sec:formal cohomology} and \ref{S:section8.5} we prove our main generic representability results. In \S\ref{S:compact} we give a brief formulation of a theory of compactly supported flat cohomology. Finally, in \S\ref{sec:examples} we record some examples of flat cohomology groups illustrating our main results.
\end{pg}

\begin{pg}{\bf Acknowledgments.}
The authors thank K. \v{C}esnavi\v{c}ius, O. Gabber,  A. J.  de Jong,  and M.  Lieblich for helpful correspondence.  Special thanks are due B. Bhatt for pointing out errors in earlier versions of the article and for explaining a proof of the continuity property for $\bmu_p$, and to the referee for a very large number of helpful suggestions and corrections. The authors gratefully acknowledge the support of the National Science Foundation under NSF Postdoctoral Research Fellowship DMS-1902875 and NSF grant DMS-1840190 (Bragg), and NSF grant DMS-1902251 (Olsson),  NSF FRG grant DMS2151946 (Olsson), and Olsson was also partially supported by the Simons Collaboration on Perfection in Algebra, Geometry, and Topology.
\end{pg}

\begin{pg}{\bf Conventions.} We fix throughout a prime number $p$ and work with schemes (or algebraic spaces, stacks, derived schemes, etc.) over $\mathbf{F}_p$.  All the locally free sheaves we consider will be of finite rank and will be referred to simply as \emph{locally free sheaves}.
\end{pg}
 We will work with algebraic spaces for most of the article, except in section \ref{S:section8.5} where some projectivity assumptions are needed. The added generality of working with algebraic spaces does not substantively change many of the arguments and therefore seems the more appropriate context. For schemes one can in many places work with the Zariski topology instead of the small \'etale topology, which is our preferred ``small'' topology. T


 By a \emph{group scheme $\bG $} over an algebraic space $X$ we mean a group algebraic space $\bG \rightarrow X$ such that for any morphism $T\rightarrow X$ with $T$ a scheme the fiber product $\bG \times _TX$ is a scheme over $T$.  A more proper terminology would be \emph{relative group scheme} but since we do not consider any other notion we simply use the term ``group scheme.''  In fact, most of the group schemes we consider will be affine over $X$ so the schematic nature of the base changes is automatic.  We say that an affine group scheme $\bG \rightarrow X$ is \emph{finite locally free} if the corresponding quasi-coherent sheaf of algebras $\mls O_{\bG }$ on $X$ is a locally free sheaf of finite rank.

\section{Sheaves, derived categories, and group schemes}\label{S:section2new}

In this section we establish some notation concerning the various sorts of sheaves and derived categories that we will use. We also recall some fundamental results concerning group schemes.

\begin{pg}\label{ssec:derived categories}{\bf Derived categories of sheaves.} Let $X$ be an algebraic space.
\end{pg}
\begin{notation}\label{N:2.2}
    We write $\Alg_X$ for the category whose objects are rings $A$ equipped with a map $\Spec A\to X$ and whose morphisms $A\to B$ are ring maps whose induced map on spectra $\Spec B\to\Spec A$ commutes with the maps to $X$. We write $\Aff_X\simeq(\Alg_X)^{\op}$ for the category of affine schemes over $X$ and $X_{\fppf}$ (resp. $X_{\et}$) for the big fppf (resp. small \'{e}tale) site of affine schemes over $X$. We write $\mls O_{\fppf}$ and $\mls O_{\et}$) for the respective structure sheaves.
\end{notation}


\begin{notation}
    We let $\mls D(X_{?})$ denote the derived $\infty$-category (in the sense of \cite[1.3.5.8]{LurieHA}) of the abelian category of sheaves of abelian groups on $X_?$, where $?=\fppf$ or $\et$. This is a stable $\infty$-category.
    We write $\D(X_?)$ for the 1-categorical derived category. There is a canonical map $\mls D(X_?)\to\D(X_?)$ which identifies the latter with the homotopy category of the former \cite[1.3.5.15]{LurieHA}. In particular, this map induces a bijection on isomorphism classes of objects. When written without a subscript, we will always intend $\mls D(X)=\mls D(X_{\fppf})$ and $\D(X)=\D(X_{\fppf})$.
\end{notation}

\begin{notation}\label{not:derived pushforward and pullback}
    Given a morphism $f\colon X\to Y$ we have the pushforward functor $\R f^?_*\colon \mls D(X_?)\to\mls D(Y_?)$. If we think confusion is unlikely, we will omit the superscript $?$, and so write simply $\R f_*$. We adopt similar conventions with respect to pullback.
\end{notation}

\begin{notation} 
    For $?=\fppf$ or $\et$,  we write $\mls D(X_{?},\mls O_{?})$ for the derived $\infty$-category of the category of sheaves of $\mls O_{?}$-modules on $X_{?}$. For locally noetherian $X$ we write $\mls D_{\coh}(X_{\et },\mls O_{\et})$ for the full subcategory of $\mls D(X_{\et},\mls O_{\et})$ spanned by the objects with coherent cohomology.
\end{notation}

\begin{notation}
     We let $\Ab$ denote the category of abelian groups, $\mls D(\Ab)$ the derived $\infty$-category of $\Ab$, and $\D(\Ab)$ the 1-categorical derived category. For a ring $R$, we will write $\mls D(\Mod_R)$ for the derived $\infty $-category of the category $\Mod_R$ of $R$-modules.
\end{notation}

\begin{notation}\label{not:infinity sheaves}
   For an $\infty$-category $\mls D$ with all finite limits we let $\Shv_{\mls D}(X)$ denote the $\infty$-category of $\infty$-sheaves on $X_{\fppf}$ with values in $\mls D$. That is, $\Shv_{\mls D}(X)$ is the full $\infty$-subcategory of $\text{Fun}(\Alg_X,\mls D)$ consisting of those functors
     \[
        \mls F\colon\Alg_X\to\mls D
     \]
     satisfying fppf descent in the $\infty$-categorical sense.
     In the special case when $\mls D=\mls D(\Ab)$, we will omit the subscript and write simply $\Shv(X)$ for $\Shv_{\mls D(\Ab)}(X)$.
\end{notation}

\begin{pg}\label{pg:the functor H}
    There is a functor
    \begin{equation}\label{eq:der cat map to shv}
        \rH\colon\mls D(X)\to\Shv(X),\hspace{1cm}\bC^{}\mapsto\rH_{\bC^{}}
    \end{equation}
    which sends a complex $\bC^{}$ of fppf sheaves on $X$ to the fppf sheafification (in the $\infty$-categorical sense) of the functor $U\mapsto \bC^{}(U)$. Explicitly, $\rH_{\bC^{}}$ is the functor
    \[
        (u\colon U\to X)\mapsto\R\Gamma(U_{\fppf},u^{-1}\bC^{}).
    \]
\end{pg}

\begin{pg}
    For a sheaf $\mls F\in\Shv(X)$ and an integer $n$ we may form the \emph{$n$th cohomology sheaf} $\mls H^n(\mls F)$ of $\mls F$ as the (1-categorical) sheaf associated to the presheaf $U\mapsto\mls H^n(\mls F(U))$. This is compatible under the embedding $\rH$ with the usual cohomology sheaves of a complex, in the sense that for a complex $\bC^{}\in\mls D(X)$ there is a natural isomorphism $\mls H^n(\bC^{})\simeq\mls H^n(\rH_{\bC^{}})$.
    
    In keeping with the usual notation for subcategories of derived categories, we use superscripts to denote the full subcategory of sheaves spanned by objects whose cohomology sheaves $\mls H^n$ vanish in a particular range. So, for example, $\Shv^{\leq 0}(X)$ is the full subcategory spanned by sheaves with $\mls H^n=0$ for $n>0$. As noted above, $\rH$ respects such conditions, and so for example restricts to a functor
    \[
        \rH\colon \mls D^{\leq 0}(X)\to\Shv^{\leq 0}(X).
    \]
\end{pg}

\begin{pg}\label{pg:hypercompleteness}
    It follows from \cite[C.3.6.1]{LurieSAG} that the functor $\rH$ is fully faithful, and induces an equivalence between $\mls D(X)$ and the full subcategory of $\Shv(X)$ spanned by those sheaves which are \emph{hypercomplete} in the sense of \cite[6.5.2.8ff]{LurieHTT} (explicitly, a sheaf $\mls F\in\Shv(X)$ is hypercomplete if $\Hom_{\Shv(X)}(\mls U,\mls F)$ is contractible for every $\mls U\in\Shv(X)$ such that $\mls H^n(\mls U)=0$ for all $n\in\bZ$). Moreover, by a result of Lurie \cite[6.5.3.12,6.5.3.13]{LurieHTT}, a sheaf $\mls F\in\Shv(X)$ is hypercomplete if and only if it is a \emph{hypersheaf}, meaning that it satisfies descent for fppf hypercoverings. Thus, the functor $\rH$ identifies $\mls D(X)$ with the category of fppf hypersheaves on $X$ with values in $\mls D(\Ab)$.
\end{pg}

\begin{pg}\label{pg:derived pushforward}
    Given a morphism $f\colon X\to S$ of algebraic spaces we write $\R f_*\colon \Shv(X)\to\Shv(S)$ for the resulting pushforward between categories of sheaves. Under the embedding $\rH$ this is compatible with the derived pushforward $\R f_*\colon \mls D(X)\to\mls D(S)$. 
\end{pg}

\begin{pg}\label{SS:qcoh}{\bf Coherent sheaves and vector bundles.}
    We will view quasicoherent sheaves over an algebraic space $X$ as sheaves on $X_{\et}$.
    We will be careful to distinguish between quasicoherent sheaves and their associated functors on big sites, and in particular between locally free sheaves and vector bundles.
\end{pg}
\begin{notation}
     If $\mls V$ is a locally free sheaf on $X$, we write
    \[
        \bV(\mls V):=\cSpec_X\Sym^{\ast}(\mls V^{\vee})
    \]
    for the associated vector bundle on $X$. The fppf sheaf $\Alg_X\to\Ab$ represented by $\bV(\mls V)$ is given by
    \[
        (u\colon U\to X)\mapsto\Hom_{\Sch_X}(U,\bV(\mls V))=\Gamma(U_{\et},u^{*}\mls V).
    \]
    A group scheme over $X$ is a \emph{vector group} if it is isomorphic to the underlying group scheme associated to a vector bundle. 
\end{notation}

\begin{example}
    When $\mls V=\mls O_X$ we have $\bV(\mls O_X)=\bG_a$.
\end{example}

\begin{pg}{\bf Perfect complexes.}\end{pg}

\begin{defn}
    A \emph{strictly perfect complex of locally free sheaves} on $X$ is a bounded complex of small \'etale sheaves whose terms are locally free sheaves on $X$ and whose differentials are $\mls O_X$-linear maps. A \emph{perfect complex of locally free sheaves} on $X$ (or simply a \emph{perfect complex on $X$}) is a complex $\mls V^{}\in\mls D(X_{\et},\mls O_{\et})$ for which there exists an \'etale cover $u\colon U\to X$ over which the pullback $\rL u^*\mls V^{}$ is isomorphic as an object of the category $\mls D(U_{\et},\mls O_{\et})$ to a strictly perfect complex of locally free sheaves on $U$.

    A \emph{strictly perfect complex of vector bundles} on $X$ is a bounded complex of big fppf sheaves on $X$ whose terms are vector bundles and whose differentials are $\mls O_{\fppf}$-linear maps. A \emph{perfect complex of vector bundles} on $X$ is a complex $\bV^{}\in\mls D(X_{\fppf},\mls O_{\fppf})$ for which there exists an \'etale cover $u\colon U\to X$ over which the pullback $\rL u^*\mls V^{}$ is isomorphic as an object of the category $\mls D(U_{\fppf},\mls O_{\fppf})$ to a strictly perfect complex  of vector bundles on $U$.
\end{defn}


\begin{notation}\label{not:perfect complexes and group schemes in Shv}
    Given a complex $\mls V^{}\in\mls D(X_{\et},\mls O_{\et})$, we write $\R\bV(\mls V^{})\in\mls D(X)$ for the image of $\mls V^{}$ under the composition
    \[
        \mls D(X_{\et},\mls O_{\et})\xrightarrow{\rL\epsilon^*}\mls D(X_{\fppf},\mls O_{\fppf})\xrightarrow{\mathrm{forget}}\mls D(X_{\fppf}),
    \]
    where $\epsilon\colon X_{\fppf}\to X_{\et}$ denotes the canonical morphism of ringed topoi. The corresponding sheaf $\rH_{\R\bV(\mls V^{})}\in\Shv(X)$ is given by
    \[
        (u\colon U\to X)\mapsto\R\Gamma(U_{\et},\rL u^{*}\mls V^{}).
    \]
    To lighten the notation, we will write $\rH_{\mls V^{}}$ for $\rH_{\R\bV(\mls V^{})}$.
\end{notation}
If $\mls V^{}$ is a perfect complex of locally free sheaves then $\R\bV(\mls V^{})$ is a perfect complex of vector bundles.

\begin{pg}{\bf $p$-linear maps.}
Key to our understanding of flat cohomology are certain complexes whose objects are locally free sheaves and whose differentials are $p$-linear maps. In this subsection we review the basic definitions and set up notation in this regard.
\end{pg}
\begin{pg}
\label{pg:p-linear maps}
    Let $X$ be an algebraic space over $\bF_p$ and let $\mls E$ and $\mls G$ be quasi-coherent sheaves of $\mls O_{X}$-modules (recall that by convention \ref{SS:qcoh} such sheaves are considered in the small \'etale topology). A map $\varphi\colon \mls E\to\mls G$ of sheaves is \emph{$p$-linear} if it is additive and satisfies $\varphi(se)=s^p\varphi(e)$ for local sections $s\in\mls O_X$ and $e\in\mls E$. Observe that giving such a $p$-linear map is equivalent to giving an $\mls O_X$-linear map $\mls E\rightarrow \F_{X*}\mls G$ or by adjunction an $\mls O_X$-linear map $\F_X^*\mls E\rightarrow \mls G$.
    In particular, for a quasi-coherent sheaf $\mls E$ the adjunction map $\F_X^*\F_{X*}\mls E\rightarrow \mls E$ corresponds to the $p$-linear map $\tau _{\mls E}\colon\F_{X*}\mls E\rightarrow \mls E$ given locally by $e\mapsto e$. Any $p$-linear map $\mls E\to\mls G$ factors uniquely through $\tau_{\mls G}$, and the association $\rho\mapsto\widetilde{\rho}:=\tau_{\mls G}\circ\rho$ defines a bijection between the set of $\mls O_X$-linear maps $\mls E\to\F_{X*}\mls G$ and the set of $p$-linear maps $\mls E\to\mls G$.

    For $i=0,1$ let $\mls E_i$ and $\mls G_i$ be $\mls O_{X}$-modules and let $\widetilde{\rho}_i\colon \mls E_i\to\mls G_i$ be $p$-linear maps. The \emph{tensor product} of $\widetilde{\rho}_0$ and $\widetilde{\rho}_1$ is the additive map
    \[
        \widetilde{\rho}_0\otimes\widetilde{\rho}_1\colon\mls E_0\otimes\mls E_1\to\mls G_0\otimes\mls G_1
    \]
    whose action on simple tensors is given by $e_0\otimes e_1\mapsto\widetilde{\rho}_0(e_0)\otimes\widetilde{\rho}_1(e_1)$. One verifies that this is indeed well-defined, and that $\widetilde{\rho}_0\otimes\widetilde{\rho}_1$ is again a $p$-linear map. This is the same as the $p$-linear map corresponding to the $\mls O_X$-linear tensor product $\F^*_X(\mls E_0\otimes\mls E_1)\simeq(\F^*_X\mls E_0)\otimes (\F^*_X\mls E_1)\to\mls G_0\otimes\mls G_1$, where the second map is the $\mls O_X$-linear tensor product of the maps $\rho_i\colon\F_X^*\mls E_i\to\mls G_i$ corresponding to $\widetilde{\rho}_i$.
    
    There is also a natural notion of pullback of $p$-linear maps: Given a $p$-linear map $\widetilde{\rho}\colon\mls E\to\mls G$ with corresponding $\mls O_X$-linear map $\rho\colon\F^*_X\mls E\to\mls G$ and a morphism $f\colon Y\to X$, the \emph{pullback} of $\widetilde{\rho}$ is the $p$-linear map
    \[
        f^*\widetilde{\rho}\colon f^*\mls E\to f^*\mls G
    \]
    corresponding to the $\mls O_X$-linear map $\F_Y^*(f^*\mls E)\simeq f^*(\F_X^*\mls E)\xrightarrow{f^*\rho}f^*\mls G$. 
\end{pg}
\begin{pg}
    \label{pg:p linear map of vector bundles}
    Suppose that $\mls E$ and $\mls G$ are locally free  sheaves on $X$. Given a $p$-linear map $\widetilde{\rho}\colon\mls E\to\mls G$ we obtain a morphism
    \[
        \widetilde{\brho}\colon\bV(\mls E)\to\bV(\mls G)
    \]
    between the associated vector groups by associating to a morphism $u\colon U\to X$ the map
    \[
        u^*\widetilde{\rho}\colon \Gamma(U,u^*\mls E)\to\Gamma(U,u^*\mls G).
    \]
    We note that $\widetilde{\brho}$ is only a homomorphism of additive group schemes, and not a morphism of vector bundles.

    \begin{example}
        For a locally free sheaf $\mls E$ there is a canonical $p$-linear map $\mls E\to\F_X^*\mls E=\mls E\otimes_{\mls O_X,\F_X^*}\mls O_X$ given by $e\mapsto e\otimes 1$. The induced map $\bV(\mls E)\to\bV(\F_X^*\mls E)\simeq\F_X^{-1}\bV(\mls E)$ on vector groups is the relative Frobenius morphism of $\bV(\mls E)$ over $X$.
    \end{example}
\end{pg}

 \begin{pg}\label{pg:derived p linear maps}
        We will also consider $p$-linear maps in the derived category. Let $\mls E$ and $\mls G$ be complexes of quasi-coherent sheaves, and consider a map $\rho\colon\rL \F_X^*\mls E \rightarrow \mls G $ in $\mls D(X_{\et},\mls O_{\et})$. Associated to such a map is a ``derived $p$-linear map'' $\widetilde{\rho}\colon\mls E\to\mls G$ defined in $\mls D(X_{\et})$ as follows. First note that there is a canonical morphism of functors $\F_{X*}\rightarrow \id_X$ on the category of sheaves of abelian groups on $X$: If $U\rightarrow X$ is a morphism and $\mls F$ is an abelian sheaf on $X$ then $(\F_{X*}\mls F)(U) = \mls F(U\times _{X, \F_X}X)$ which maps to $\mls F(U)$ by pullback along the relative Frobenius $\F_{U/X}$, and deriving this transformation we get a morphism $\tau \colon \R \F_{X*}\rightarrow \id_X$ of functors $\mls D(X)\rightarrow \mls D(X)$. We define $\widetilde{\rho}:=\tau_{\mls G}\circ\rho'$ to be the composition of the map $\rho'\colon\mls E \rightarrow \R \F _{X*}\mls G$ corresponding to $\rho$ under adjunction with the map $\tau _{\mls G}\colon\R\F_{X*}\mls G\to\mls G$.
        
        We note that, in contrast to the non-derived setting, the forgetful map $\mls D(X_{\et},\mls O_{\et})\to\mls D(X_{\et})$ is typically not faithful. Thus, we cannot in general recover $\rho$ from $\widetilde{\rho}$. Furthermore, there is not an obvious way to tensor or pull back maps of the latter form. Accordingly, when working with derived $p$-linear maps, we will choose to specify $\rho$, and refer to $\widetilde{\rho}$ as the \emph{associated derived $p$-linear map}. This enables us to define the tensor product and pullback of derived $p$-linear maps as in the non-derived case: given maps $\rho_i\colon\rL\F_X^*\mls E_i\to\mls G_i$ in $\mls D(X_{\et},\mls O_{\et})$ ($i=0,1$), we obtain a map
        \[
            \rL\F_X^*(\mls E_0\otimes^{\rL}\mls E_1)\simeq(\rL\F_X^*\mls E_0)\otimes^{\rL}(\rL\F_X^*\mls E_1)\xrightarrow{\rho_0\otimes^{\rL}\rho_1}\mls G_0\otimes^{\rL}\mls G_1
        \]
        in $\mls D(X_{\et},\mls O_{\et})$, and given a map $\rho\colon\rL\F_X^*\mls E\to\mls G$ in $\mls D(X_{\et},\mls O_{\et})$ and a morphism $f\colon Y\to X$ we get an induced map
        \[
            \rL \F _Y^*(\rL f^*\mls E )\simeq\rL f^*(\rL\F_X^*\mls E)\xrightarrow{\rL f^*\rho} \rL f^*\mls G.
        \]
        in $\mls D(X_{\et},\mls O_{\et})$.
    \end{pg}

\begin{pg}{\bf Group schemes.} We fix notation and review some fundamental results from \cite{SGA31} regarding group schemes. Let $X$ be an algebraic space over $\mathbf{F}_p$.
\end{pg}
\begin{notation}
    For a group scheme $\bG/X$, the \emph{$n$th Frobenius kernel} of $\bG$ is the group scheme
    \[
        \bG[\F^n]:=\ker\left(\F_{\bG/X}^n\colon\bG\to\bG^{(n)}\right),
    \]
    where $\bG^{(n)}$ is the fiber product $\bG \times _{X, \F_X^n}X.$
    \begin{example}
        We have $\bmu_p=\bG_m[\F]$ and $\balpha_p=\bG_a[\F]$.
    \end{example}
\end{notation}
\begin{defn}[{\cite[expos\'e $\text{VII}_{A}$, 4.1.3, D\'efinition]{SGA31}}]
    A group scheme $\bG/X$ has \emph{height $\leq n$} if $\F_{\bG /X}^n=0$, or equivalently if $\bG[\F^n] = \bG $ . We say that $\bG$ has \emph{height $n$} if $\bG$ has height $\leq n$ and does not have height $\leq n-1$. The \emph{coheight} of a commutative finite locally free group scheme $\bG$ is the height of the Cartier dual $\bG^{\vee}$.
    \begin{example}
        The trivial group scheme is the unique group scheme of height 0. The group schemes $\bmu_p$ and $\balpha_p$ have height 1 and the group schemes $\balpha_p$ and $\bZ/p$ have coheight 1.
    \end{example}
\end{defn}

\begin{pg}[The Lie algebra]
    Let $\bG$ be a group scheme over $X$. We set $\omega_{\bG}:=e^*\Omega^1_{\bG/X}$, where $e:X\to\bG$ is the identity section of $\bG$. The \emph{Lie algebra} of $\bG$ is the quasicoherent sheaf
    \[
        \mls L_{\bG}:=(\omega_{\bG})^{\vee}=e^*(\Omega^1_{\bG/X})^{\vee}.
    \]
    The bracket of differential forms endows $\mls L_{\bG}$ with a canonical structure of a sheaf of Lie algebras, and the $p$th power map (the map sending a differential form to its $p$th iterate) with a canonical restricted structure. A morphism $\varphi\colon \bG\to\bH$ of group schemes induces a map $\mathrm{d}\varphi\colon \mls L_{\bG}\to\mls L_{\bH}$ of sheaves of Lie algebras which is compatible with the restricted structure.
\end{pg}

\begin{pg}\label{P:2.34}
    Let $\bG$ be a finite locally free group scheme of height $\leq 1$ on $X$. The Lie algebra $\mls L_{\bG}$ is then locally free, and by \cite[expos\'e $\text{VII}_{A}$, Remarque 7.5]{SGA31} the functor $\bG\mapsto\mls L_{\bG}$ defines an equivalence between the category of finite locally free group schemes on $X$ of height $\leq 1$ and the category of locally free sheaves of restricted Lie algebras on $X$.
\end{pg}

\begin{pg}\label{pg:height 1 group schemes and lie algebras}
    In the commutative case, the target category for this equivalence may be described in simpler terms. Indeed, if $\bG$ is commutative, then the Lie bracket on $\mls L_{\bG}$ is trivial, and the restricted structure on $\mls L_{\bG}$ is simply a $p$-linear map
    \[
         \widetilde{\rho}_{\bG}:\mls L_{\bG}\to\mls L_{\bG}.
    \]
    Write $\rho_{\bG}:\mls L_{\bG}\to\F_{X*}\mls L_{\bG}$ for the corresponding $\mls O_X$-linear map. We refer to either of these as the \emph{$p$th power map}. The functor $\bG\mapsto (\mls L_{\bG},\rho_{\bG})$ defines an equivalence between the category of commutative finite locally free group schemes on $X$ of height $\leq 1$ and the category of pairs $(\mls L,\rho)$, where $\mls L$ is a locally free sheaf on $X$ of finite rank and $\rho \colon \mls L\to\F_{X*}\mls L$ is an $\mls O_X$-linear map.
    
   \begin{example}
        The group scheme $\bmu_p$ has Lie algebra $\mls L_{\bmu_p}\simeq\mls O_X$ and $\widetilde{\rho}_{\bmu_p}\colon\mls O_X\to\mls O_X$ is the $p$th power map $s\mapsto s^p$. The group scheme $\balpha_p$ has Lie algebra $\mls L_{\balpha_p}\simeq\mls O_X$ and $\widetilde{\rho}_{\balpha_p}=0$.
   \end{example}
    
\end{pg}

    \begin{pg}[The moduli stack of commutative height 1 group schemes]
    \label{pg:moduli stack of height 1 group schemes}
        Let $\mls B$ denote the moduli stack of commutative finite locally free height 1 group schemes over $\bF_p$. We can give an explicit quotient presentation of this stack using the above equivalence of categories. We identify $\mls B$ with the stack of pairs $(\mls L,\rho)$ as above. The rank of a locally free sheaf is a locally constant function, so we have
        \[
            \mls B=\bigsqcup_{r\geq 1}\mls B_r,
        \]
        where $\mls B_r$ classifies pairs $(\mls L, \rho )$ with $\mls L$ of rank $r$. Each $\mls B_r$ may be presented as the quotient stack 
        \[
            \mls B_r = \left[\mathrm{M}_r/\mathbf{GL}_r\right],
        \]
        where $\mathrm{M}_r$ denotes the scheme of $r\times r$ matrices and $\mathbf{GL}_r$ acts on $\mathrm{M}_r$ by the formula $M\mapsto \Lambda^{-1}M\Lambda^{(p)}$, where $M\in\mathrm{M}_r$, $\Lambda\in\mathbf{GL}_r$, and $\Lambda^{(p)}$ denotes the matrix obtained by raising the entries of $\Lambda$ to the $p$th power.
       We note that this quotient presentation shows that $\mls B$ is smooth over $\bF_p$ and has affine diagonal.
    \end{pg}
    


\begin{pg}[The B\'egueri resolution]\label{pg:begueri resolution}
    Let $\bG$ be a commutative finite locally free group scheme on $X$. The \emph{B\'egueri resolution} \cite[2.2.1]{Begueri} of $\bG$ is a certain canonical and functorial short exact sequence
    \[
        0\to\bG\to\bB^0\to\bB^1\to 0
    \]
    of group schemes over $X$ in which both $\bB^0$ and $\bB^1$ are commutative, smooth, and affine.
    Note that the existence of this sequence implies, in particular, that $\bG$ is a local complete intersection over $X$. 
    
\end{pg}

\begin{pg}[The Artin--Milne resolution]\label{pg:artin milne resolution}
    Let $\bG$ be a commutative finite locally free group scheme on $X$ of coheight 1 (e.g. $\balpha_p$ or $\mathbf{Z}/p$, but not $\bmu_p$). Let $\bV=\mathbf{V}(\omega_{\bG^{\vee}})$ be the vector bundle associated to the locally free sheaf $\omega_{\bG^{\vee}}$ and let $\bV^{(p)}$ be the Frobenius twist of $\bV$ over $X$. Artin and Milne \cite[\S 2]{ArtinMilne} construct a short exact sequence
    \begin{equation}\label{eq:AM sequence}
        0\to\bG\to\mathbf{V}\xrightarrow{\F-v}\mathbf{V}^{(p)}\to 0
    \end{equation}
    of group schemes over $X$, where $\F=\F_{\bV/X}$ is the relative Frobenius morphism of $\bV$ over $X$ and $v$ is the map induced by the Verschiebung of $\bG^{\vee}$.

    \begin{example}\label{ex:artin schreier}
        If $\bG=\balpha_p$ then~\eqref{eq:AM sequence} specializes to the sequence~\eqref{eq:alpha_p sequence, first appearence}. If $\bG=\mathbf{Z}/p$ then~\eqref{eq:AM sequence} specializes to the Artin--Schreier sequence~\eqref{eq:Z_p sequence, first appearence}.
    \end{example}

    \begin{rem}
        The Artin--Milne resolution gives a resolution by smooth group schemes whose dimensions are typically smaller than those appearing in the B\'{e}gueri resolution. This construction has the additional property of being a resolution by vector groups. Such a resolution does not exist in general: for instance, there does not exist any nontrivial map from $\bmu_p$ to a vector group.
    \end{rem}

\end{pg}

\begin{pg}[Hoobler's 4-term exact sequence]\label{P:Hoobler}
    We will use a certain exact sequence associated to a height 1 group scheme constructed by Hoobler \cite{Hoobler}. For the sake of exposition, we describe this here only in a special case which will suffice for our applications.
    
    Let $X$ be a smooth algebraic space over $\bF_p$.
    The differential $\mathrm{d}\colon\Omega^1_{X}\to\Omega^2_{X}$ is $\ms O_{X}^p$-linear, and so the pushforward
    \[
        \F_{X*}\mathrm{d}\colon\F_{X*}\Omega^1_{X}\to\F_{X*}\Omega^2_{X}
    \]
    is a map of $\ms O_{X}$-modules. Thus, the subsheaf
    \[
        \F_{X*}\Z\Omega^1_{X}:=\ker\left(\F_{X*}\mathrm{d}\colon\F_{X*}\Omega^1_{X}\to\F_{X*}\Omega^2_{X}\right)
    \]
    has a natural $\mls O_{X}$-module structure and is locally free of finite rank \cite[2.2.8]{Ill79}.  Let
        $\C_X\colon\F_{X*}\Z\Omega^1_{X}\to\Omega^1_{X}$
    denote the Cartier operator and let $\iota\colon\F_{X*}\Z\Omega^1_X\hookrightarrow\F_{X*}\Omega^1_X$ denote the inclusion. Both of these maps are $\mls O_X$-linear. We let $\widetilde{\iota}:=\tau\circ\iota\colon\F_{X*}\Z\Omega^1_X\to\Omega^1_X$ denote the $p$-linear map associated to $\iota$.


    Let $\bG$ be a commutative group scheme which is flat and locally of finite presentation over $X$ whose Frobenius kernel $\bG[\F]$ is locally free. These assumptions hold, for instance, if $\bG$ is finite locally free of height $1$, or if $\bG$ is smooth. Then the Lie algebra $\mls L=\mls L_{\bG}$ of $\bG$ is locally free, and comes with a  $p$-linear map $\widetilde{\rho}_{\bG}\colon\mls L\to\mls L$. Consider the maps
    \begin{equation}\label{eq:hooblers maps}
        \C,\widetilde{\rho}\colon\mls L\otimes\F_{X*}\Z\Omega^1_{X}\to\mls L\otimes\Omega^1_{X}
    \end{equation}
    where $\C=1_{\mls L}\otimes\C_X$ and $\widetilde{\rho}=\widetilde{\rho}_{\bG}\otimes\widetilde{\iota}$ is the tensor product of $p$-linear maps as defined above \ref{pg:p-linear maps}. Thus, $\C$ is $\mls O_X$-linear and $\widetilde{\rho}$ is $p$-linear. As described in \ref{pg:p linear map of vector bundles}, these maps extend to homomorphisms
    \[
        \bC,\widetilde{\brho}\colon\bV(\mls L\otimes\F_{X*}\Z\Omega^1_{X})\to\bV(\mls L\otimes\Omega^1_{X})
    \]
    between the associated vector groups. Hoobler shows \cite[3.2]{Hoobler} that the sequence
    \begin{equation}\label{eq:hoobler's sequence}
        \begin{tikzcd}
            0\arrow{r}&\bG\arrow{r}&\F_{X*}\bG^{(p)}\arrow{r}{\dlog}&\bV\left(\mls L\otimes\F_{X*}\Z\Omega^1_{X}\right)\arrow{r}{\bC-\widetilde{\brho}}&\mathbf{V}\left(\mls L\otimes\Omega^1_{X}\right)\arrow{r}&0
        \end{tikzcd}
    \end{equation}
    of group schemes on $X$ is exact, where $\F_{X*}\bG^{(p)}$ is the Weil restriction of $\bG^{(p)}:=\bG\times_{X,\F_X}X$ along $\F_X$ and $\bG\to\F_{X*}\bG^{(p)}$ is the adjunction map.

\end{pg}

\section{Cohomology of height 1 group schemes via Kan extension}\label{sec:flat coho via Kan}




In this and the following sections we will describe the cohomology of a height 1 group scheme over a ring of characteristic $p$ in terms of the cotangent complex. We will also show that the cohomology of height 1 group schemes on rings of characteristic $p$ is determined by its values on smooth $\bF_p$-algebras via Kan extension. The main result of these two sections is Theorem \ref{thm:main kan theorem, over a stack}.

\begin{pg}{\bf Flat cohomology over a smooth $\bF_p$-algebra.} Let $P$ be a smooth $\bF_p$-algebra and let $\bG$ be a commutative finite locally free height 1 group scheme over $P$. We will use Hoobler's exact sequence \ref{P:Hoobler} to describe the flat cohomology $\R\Gamma(\Spec P,\bG)$. Let $\epsilon\colon(\Spec P)_{\fppf}\to (\Spec P)_{\et}$ denote the projection from the big fppf topos  of $X$ to the small \'{e}tale topos of $X$.
\end{pg}
    
    \begin{prop}\label{prop:cotangent complex description on smooth algebras}
        We have $\R\epsilon_*(\F_{P*}\bG^{(p)})=0$.
    \end{prop}
    \begin{proof}
        This follows from \cite[2.2 and 2.4]{ArtinMilne}.
    \end{proof}

    Consider Hoobler's sequence~\eqref{eq:hoobler's sequence} for $\bG$ with $X=\Spec P$. Adopting the notation of \ref{P:Hoobler}, we take derived global sections to obtain a distinguished triangle
    \[
        \R\Gamma(\Spec P,[\bG\to\F_{P*}\bG^{(p)}])[1]\to\mls L\otimes_P\F_{P*}\Z\Omega^1_P\xrightarrow{\C-\widetilde{\rho}} \mls L\otimes_P\Omega^1_P\xrightarrow{+1}
    \]
    in $\mls D(\Ab)$, where the complex $[\bG \rightarrow \F_{P*}\bG^{(p)}]$ has terms in degrees 0 and 1. By Proposition \ref{prop:cotangent complex description on smooth algebras} the projection map $[\bG \rightarrow \F_{P*}\bG^{(p)}]\rightarrow \bG$ induces an isomorphism upon applying $\R\Gamma(\Spec P,\rule{.25cm}{0.4pt})$. We obtain the following result.

  \begin{cor}\label{cor:hoobler for smooth algebras} Hoobler's 4-term sequence \eqref{eq:hoobler's sequence} induces a distinguished triangle
  \begin{equation}\label{eq:triangle in smooth case}
        \R\Gamma(\Spec P,\bG)[1]\to\mls L\otimes_P\F_{P*}\Z\Omega^1_P\xrightarrow{\C-\widetilde \rho } \mls L\otimes_P\Omega^1_P\xrightarrow{+1}
    \end{equation}
in $\mls D(\Ab)$.
\end{cor}



\begin{pg}{\bf Kan extension.}
    We will analyze flat cohomology of height 1 group schemes over general $\bF_p$-algebras by Kan extension from smooth $\bF_p$-algebras. We recall here some facts regarding Kan extensions. For a ring $R$, we write $\Alg_R$ for the category of $R$-algebras and $\Poly_R\subset\Alg_R$ for the full subcategory of finitely generated polynomial algebras over $R$. We say that a functor
            \[
                \mls F\colon \Alg_R\to\mls D(\Ab)
            \]
    is \emph{left Kan extended} from a full subcategory $\mls C\subset\Alg_R$ if $\mls F$ is a left Kan extension of the restriction $\mls F\big\lvert_{\mls C}$ along the inclusion $\mls C\hookrightarrow\Alg_R$. Recall from \cite[5.5.8.15]{LurieHTT} that for any functor $\mls F\colon\Poly_R\to \mls D(\Ab )$ the left Kan extension $\rL \mls F\colon\Alg _R\to\mls D(\Ab )$ may be characterized as the unique extension of $\mls F$ which commutes with sifted colimits.
    
      \end{pg}  
        \begin{example}[The cotangent complex]
        \label{ex:cotangent complex}
        We consider the absolute cotangent complex as a functor
        \[
            \uLOmega^1\colon\Alg_{\bF_p}\to\mls D(\Ab),\hspace{1cm}A\mapsto\rL\Omega^1_{A}.
        \]
        This functor is a left Kan extension along the inclusion $\Poly_R\subset\Alg_R$ of the functor
        \[
            \underline{\Omega}^1\colon\Poly_{\bF_p}\to\mls D(\Ab),\hspace{1cm}P\mapsto\Omega^1_{P}.
        \]
        For an $\mathbf{F}_p$-algebra $R$, we let
        \[
            \uLOmega^1_{R}\colon\Alg _R\rightarrow \mls D(\Ab ),\hspace{1cm}A\mapsto\rL\Omega^1_A
        \]
        denote the precomposition of $\uLOmega^1$ by the forgetful functor $\Alg_R\rightarrow \Alg _{\mathbf{F}_p}$. The forgetful functor commutes with sifted colimits, so $\uLOmega^1_R$ does as well. Thus, $\uLOmega^1_R$ is left Kan extended from $\Poly_R$.
        Similarly, we define a functor
        \[
            \uLZOmega ^1\colon\Alg _{\bF_p}\rightarrow \mls D(\Ab ),\hspace{1cm}A\mapsto\rL\Z\Omega^1_A
        \]
        by left Kan extension of the functor of closed forms $P\mapsto \Z\Omega^1_{P}\colon\Poly_{\mathbf{F}_p}\rightarrow \mls D(\Ab )$, and for an $\bF_p$-algebra $R$ we let $\uLZOmega^1_R\colon\Alg_R\to\mls D(\Ab)$ denote its precomposition with the functor $\Alg _R\rightarrow \Alg _{\mathbf{F}_p}$. As before, $\uLZOmega^1_{R}$ is left Kan extended from $\Poly_R$.
        \end{example}
        \begin{rem} Note the distinction between $\uLOmega ^1_R$ and $\rL \Omega ^1_R$. The former is a functor on $    \Alg _R$ while the latter is a complex of $R$-modules.
        \end{rem}
        
        \begin{example}\label{ex:functor from a complex}
            Let $\mls V^{}$ be a complex of $R$-modules. Consider the functor
            \[  
                \rH_{\mls V^{}}\colon\Alg_R\to\mls D(\Ab),\hspace{1cm}A\mapsto\mls V^{}\otimes^{\rL}_RA.
            \]
            If $\mls V^{}$ is a perfect complex then this functor is left Kan extended from $\Poly_R$ (equivalently, commutes with sifted colimits). To see this, note first that as $\Spec R$ is affine we may assume that $\mls V^{}$ is strictly perfect \cite[2.3.1(d)]{MR1106918}.
            Arguing using the tautological filtration on $\mls V^{}$ we reduce to the case when $\mls V^{}$ is a single projective $R$-module say $P$. Then $P$ is a retract of a finitely generated free $R$-module, and hence is strongly of finite presentation, meaning that $\Hom_R(P,\gap)$ commutes with sifted colimits (see \cite[5.1.3]{CS} ff.). It follows that the functor $\R\Hom_R(P,\gap)\colon\Mod_R\to\mls D(\Ab)$ also commutes with sifted colimits. Applying these observations to $P^{\vee}$, we get that the functor $\R\Hom_R(P^{\vee},\gap)\simeq P\otimes^{\rL}_R\gap\colon\Mod_R\to\mls D(\Ab)$ commutes with sifted colimits. The claim follows by noting that $\rH_{\mls V}$ agrees with the precomposition of this functor with the forgetful functor $\Alg_R\to\Mod_R$, which also preserves sifted colimits (as in e.g. \cite[5.1.3]{CS} ff.).
        \end{example}

\begin{pg}{\bf Hoobler's complex in general.}
        Let $R$ be a ring of characteristic $p$, let $\mls V^{}_R$ be a complex of $R$-modules, and let $\rho_R\colon\rL\F_R^*\mls V^{}_R\to\mls V^{}_R$ be a map in $\mls D(\Mod_R)$. We define a map
        \[
            \C\,\,\,\,(\text{resp. }\widetilde{\rho})\colon\mls V^{}_R\otimes^{\rL}_{R}\F_{R*}\rL\Z\Omega^1_{R}\to\mls V^{}_R\otimes^{\rL}_R\rL\Omega^1_R
        \]
        in $\mls D(\Mod_R)$ (resp. $\mls D(\Ab)$) as follows. Let
        \[
            \C_R\colon\F_{R*}\rL\Z\Omega^1_R\to\rL\Omega^1_R\hspace{1cm}(\text{resp.}\,\,\iota_R\colon\F_{R*}\rL\Z\Omega^1_R\to\F_{R*}\rL\Omega^1_R)
        \]
        denote the map in $\mls D(\Mod_R)$ obtained by Kan extension from polynomial $\bF_p$-algebras of the Cartier operators $\C_P\colon\F_{P*}\Z\Omega^1_P\to\Omega^1_P$ (resp. the inclusions $\F_{P*}\Z\Omega^1_P\to\F_{P*}\Omega^1_P$). We define $\C$ to be the derived tensor product $\C:=1_{\mls V^{}_R}\otimes^{\rL}_R\C_R$, and define $\widetilde{\rho}$ using the derived tensor product of derived $p$-linear maps as in \ref{pg:derived p linear maps}; explicitly, $\widetilde{\rho}$ is the derived $p$-linear map associated to the map
        \[
            \rL\F_R^*(\mls V^{}_R\otimes^{\rL}_{R}\F_{R*}\rL\Z\Omega^1_{R})\simeq(\rL\F_R^*\mls V_R)\otimes^{\rL}_R(\rL\F_R^*\F_{R*}\rL\Z\Omega^1_R)\xrightarrow{\rho_R\otimes^{\rL}\iota'_R}\mls V^{}_R\otimes^{\rL}_R\rL\Omega^1_R
        \]
        where $\iota'_R$ is the adjoint to $\iota_R$.
    \end{pg}
    \begin{notation}\label{not:notation for C_G}
        Let $\mls V^{}_R$ be a complex of $R$-modules and let $\rho_R\colon\rL\F_R^*\mls V^{}_R\to\mls V^{}_R$ be a map in $\mls D(\Mod_R)$. We define the associated \emph{Hoobler complex} by
        \begin{equation}\label{eq:def of Hoobler complex}
            \mls C(\mls V^{}_R,\rho_R):=\cocone\left(\mls V^{}_R\otimes^{\rL}_{R}\F_{R*}\rL\Z\Omega^1_{R}\xrightarrow{\C-\widetilde \rho}\mls V^{}_R\otimes^{\rL}_R\rL\Omega^1_R\right)\in\mls D(\Ab).
        \end{equation}
        As in \ref{pg:derived p linear maps}, these constructions may be pulled back in a natural way, and so we obtain a functor
        \[
            \mls C_{(\mls V^{},\rho)}\colon\Alg_R\to\mls D(\Ab),\hspace{1cm}A\mapsto\mls C(\mls V^{}_A,\rho_A).
        \]
    \end{notation}

    
        \begin{lem}\label{lem:kan extension for C_R}
            Let $R$ be a ring of characteristic $p$ and let $(\mls V^{}_R,\rho_R)$ be a pair as in \ref{not:notation for C_G}. If $\mls V^{}_R$ is perfect, then the functor $\mls C_{(\mls V^{}_R,\rho_R)}\colon\Alg_R\to\mls D(\Ab)$ is left Kan extended from $\Poly_R$.
        \end{lem}
        \begin{proof}
            As noted in \ref{ex:functor from a complex} and \ref{ex:cotangent complex}, the functors $\Alg_R\to\mls D(\Ab)$ given by $A\mapsto\rL\Omega^1_A$, $A\mapsto\F_{A*}\rL\Z\Omega^1_A$, and $A\mapsto\mls V^{}_R\otimes^{\rL}_RA$ are all left Kan extended from $\Poly_R$. The claim follows.
        \end{proof}

    \begin{pg}
        We will eventually want to work universally over the moduli stack of height 1 group schemes. To that end, we now consider an algebraic stack $\mls X$ over $\bF_p$ with affine diagonal. We write $\Alg_{\mls X}$ for the category whose objects are rings $A$ equipped with a map $\Spec A\to\mls X$, and $\Alg_{\mls X}^{\sm}\subset\Alg_{\mls X}$ (resp. $\Alg_{\mls X}^{\sm/\bF_p}\subset\Alg_{\mls X}$) for the full subcategory spanned by objects which are smooth over $\mls X$ (resp. smooth over $\bF_p$).


    \begin{notation}\label{not:notation for C_G, the functor}
        We globalize \ref{not:notation for C_G} as follows. Let $\mls V^{}$ be a perfect complex of locally free sheaves on $\mls X$ and let $\rho \colon\rL\F_{\mls X}^*\mls V^{}\to\mls V^{}$ be a morphism in $\mls D(\mls X_{\liset},\mls O_{\liset})$. Given a ring $A$ and a morphism $a\colon\Spec A\to\mls X$, we define the pullback $(\mls V^{}_A,\rho_A)$ as in \ref{pg:derived p linear maps}, and denote the resulting functor by
        \[
            \mls C_{(\mls V^{},\rho)}\colon\Alg_{\mls X}\to\mls D(\Ab),\hspace{1cm}A\mapsto\mls C(\mls V^{}_A,\rho_A).
        \]
    \end{notation}    

    \end{pg}
    \begin{pg}\label{pg:description of C}
        The functor $\mls C_{(\mls V^{},\rho)}$ may be described globally as follows. Write $\uLOmega^1_{\mls X}$ (resp. $\uLZOmega^1_{\mls X}$, resp. $\F_*\uLZOmega^1_{\mls X}$) for the functor $\Alg_{\mls X}\to\mls D(\Ab)$ defined by $A\mapsto\rL\Omega^1_A$ (resp. $A\mapsto\rL\Z\Omega^1_A$, resp. $A\mapsto\F_{A*}\rL\Z\Omega^1_A$). It follows from \cite[Theorem 3.1]{BMS} and \cite[2.2]{devalapurkar2023ptypical} that these functors are all fppf sheaves. We have
        \begin{equation}\label{eq:description of C as a functor}
            \mls C_{(\mls V^{},\rho)}\simeq\cocone\left(\rH_{\mls V^{}}\otimes^{\rL}\F_*\uLZOmega^1_{\mls X}\xrightarrow{\C-\widetilde{\rho}}\rH_{\mls V^{}}\otimes^{\rL}\uLOmega^1_{\mls X}\right)
        \end{equation}
    where the cocone is taken in $\Shv(\mls X)$. In particular, $\mls C_{(\mls V^{},\rho)}$ is an fppf sheaf.
    \end{pg}

        Before establishing various properties of $\mls C_{(\mls V^{},\rho)}$ it will be useful to note the following lemma (which will also be used later in the article).

\begin{lem}\label{L:3.15c} Consider categories $\mc A$ and $\mc B$ with subcategories $\mc A^\circ \subset \mc A$ and $\mc B^\circ \subset \mc B$ and a functor $r\colon \mc A\rightarrow \mc B$ sending $\mc A^\circ $ to $\mc B^\circ $. Assume that these four categories admit finite coproducts, and let $\mc U$ be an $\infty $-category which admits small sifted colimits.  Let $\mls F\colon \mc B\rightarrow \mc U$ be a functor, and let $\widehat {\mls F}$ be the left Kan extension of $\mls F|_{\mls B^\circ }$ so that there is a canonical map $\varepsilon \colon \widehat {\mls F}\rightarrow \mls F$.  Assume that the following conditions hold:
\begin{enumerate}
    \item [(a)] The functor $r$ has a left adjoint $u\colon \mc B\rightarrow \mc A$ sending $\mc B^\circ $ to $\mc A^\circ $.
    \item [(b)] The functor $\mls F\circ r\colon\mc A\rightarrow \mc U$ is the left Kan extension of its restriction to $\mc A^\circ $.
\end{enumerate}
Then $\varepsilon |_{\mc A}\colon \widehat {\mls F}\circ r\rightarrow \mls F\circ r$ is an isomorphism.
\end{lem}
\begin{proof}
Let $A\in \mc A$ be an object and let $\mc B^\circ |_{r(A)}$ and $\mc A^\circ |_A$ be the associated over-categories.  Then $r$ induces a functor, which we denote by the same letter
\begin{equation}\label{E:functorcategorymap}
r\colon\mc A^\circ |_{A}\rightarrow \mc B^\circ |_{r(A)},
\end{equation}
and the map $(\widehat {\mls F}\circ r)(A)\rightarrow (\mls F\circ r)(A)$ is identified with the natural map
\[
    \hocolim _{(B\rightarrow r(A))\in \mc B^\circ |_{r(A)}}\mls F(B)\rightarrow \mls F(r(A)).
\]
Now since $\mls F\circ r$ is Kan extended from $\mls A^\circ $ the map
\[
    \hocolim _{(A'\rightarrow A)\in \mls A^\circ |_A}\mls F(r(A'))\rightarrow \mls F(r(A))
\]
is an isomorphism, so it suffices to show that the map
\begin{equation}\label{E:colimitmap}
    \hocolim _{(A'\rightarrow A)\in \mc A^\circ |_{A}}\mls F(r(A'))\rightarrow \hocolim _{(B\rightarrow r(A))\in \mc B^\circ |_{r(A)}}\mls F(B)
\end{equation}
induced by \eqref{E:functorcategorymap} is an isomorphism.
Now observe that for any object $(\alpha \colon B\rightarrow r(A))\in \mls B^\circ |_{r(A)}$ the category of pairs $(\beta \colon A'\rightarrow A, h\colon B\rightarrow r(A'))$, consisting of an object $(A'\rightarrow A)\in \mls A^\circ |_A$ and a map $h$ making the diagram
\[
\begin{tikzcd}
    B\arrow{rr}{h}\arrow{dr}&&r(A')\arrow{dl}\\
    &A&
\end{tikzcd}
\]
commute, has an initial object given by the  pair $(\alpha ^t\colon(u(B)\rightarrow A, \mathrm{adj}\colon B\rightarrow ru(B))$, where $\alpha ^t$ denotes the adjoint of $\alpha $ and $\mathrm{adj}$ denotes the adjunction map.  It follows from this and general properties of homotopy colimits (see for example \cite[4.1.1.8]{LurieHTT}) that the map \eqref{E:colimitmap} is an isomorphism.
\end{proof}

    \begin{lem}\label{lem:kan ext lemma}
         Let $\mls F\colon \Alg_{\mls X}\to\mls D(\Ab)$ be an fppf sheaf with the property that for every $A\in\Alg_{\mls X}$, the restriction of $\mls F$ to $\Alg_A$ is left Kan extended from $\Alg_A^{\sm}$. Then $\mls F$ is the fppf sheafification of the left Kan extension of its restriction to the subcategory $\Alg_{\mls X}^{\sm}\subset\Alg_{\mls X}$.
    \end{lem}
    \begin{proof}
           Let $\widehat{\mls F}$ be the left Kan extension along $\Alg_{\mls X}^{\sm}\hookrightarrow\Alg_{\mls X}$ of the restriction of $\mls F$. There is a canonical map $\widehat{\mls F}\to\mls F$. As $\mls F$ is an fppf sheaf, to prove the result it will suffice to show that this map restricts to an isomorphism on some smooth cover of $\mls X$. Consider a smooth morphism $\Spec R\to\mls X$ from an affine scheme. We claim that if $A$ is an $R$-algebra then the map
            $\widehat{\mls F}(A)\to\mls F(A)$
        is an isomorphism.   If $\mls X$ admits a smooth cover by an affine scheme, then this will suffice to prove the lemma. In general, we consider a smooth cover $U\to\mls X$ where $U=\sqcup_i\Spec R_i$ is a disjoint union of affine schemes, and argue that is suffices to consider the case when $\Spec A$ is connected, so that any map $\Spec A\to U$ factors through some $\Spec R_i$.
        
To prove the claim  we apply \ref{L:3.15c} with $\mc B = \Alg _{\mls X }$, $\mc B^\circ = \Alg _{\mls X }^{\mathrm{sm}}$, $\mc A = \Alg _R$, $\mc A^\circ = \Alg _R^{\mathrm{sm}}$, and the 
 functor $r\colon\Alg_R\to\Alg_{\mls X}$ given by composition. To see that $r$ has a left adjoint consider a morphism $\Spec B\to\mls X$ and  the cartesian diagram
        \[
            \begin{tikzcd}
                \Spec B\times_{\mls X}\Spec R\arrow{d}\arrow{r}&\Spec R\arrow{d}\\
                \Spec B\arrow{r}&\mls X.
            \end{tikzcd}
        \]
        As $\mls X$ has affine diagonal, the fiber product $\Spec A\times_{\mls X}\Spec R$ is affine, say equal to $\Spec C$, and the association $B\mapsto C$ defines the desired left adjoint $u\colon\Alg_{\mls X}\to\Alg_R$. As $\Spec R\to\mls X$ is smooth, these maps restrict to a pair of adjoint functors $(u^{\sm }, r^{\sm })$ between the subcategories $\Alg_R^{\sm}$ and $\Alg_{\mls X}^{\sm}$. This implies the lemma.
    \end{proof}

    \begin{prop}\label{prop:its a Kan extension}
        For any pair $(\mls V^{},\rho)$ over $\mls X$ as in \ref{not:notation for C_G, the functor}, the functor $\mls C_{(\mls V^{},\rho)}\colon\Alg_{\mls X}\to\mls D(\Ab)$ is the fppf sheafification of the left Kan extension of its restriction to the subcategory $\Alg_{\mls X}^{\sm}\subset\Alg_{\mls X}$.
    \end{prop}
    \begin{proof}
        We check the conditions of \ref{lem:kan ext lemma}. As noted in \ref{pg:description of C} the functor $\mls C_{(\mls V^{},\rho)}$ is an fppf sheaf. Let $R$ be a ring and let $r\colon\Spec R\to\mls X$ be a morphism. The restriction of $\mls C_{(\mls V^{},\rho)}$ to $\Alg_R$ is the functor $\mls C_{(\mls V^{}_R,\rho_R)}$, where $(\mls V^{}_R,\rho_R)$ is the pullback of $(\mls V^{},\rho)$ along $r$. By \ref{lem:kan extension for C_R} the functor $\mls C_{(\mls V^{}_R,\rho_R)}$ is left Kan extended from $\Poly_R$, and hence also left Kan extended from $\Alg_R^{\sm}$. The result follows from \ref{lem:kan ext lemma}.
    \end{proof}




\begin{pg}{\bf Flat cohomology over general $\bF_p$-algebras.}
We now consider cohomology over $\bF_p$-algebras which are not necessarily smooth over $\bF_p$. Let $\mls X$ be an algebraic stack over $\bF_p$ with affine diagonal and let $\bG_{\mls X}$ be a commutative finite locally free height 1 group scheme on $\mls X$.
\end{pg}
    
    \begin{notation}\label{not:notation for C_G, just a group}
        We write $\mls C_{\bG_{\mls X}}$ for the functor $\mls C_{(\mls L,\rho)}\colon\Alg_{\mls X}\to\mls D(\Ab)$, where $\mls L$ is the Lie algebra of $\bG_{\mls X}$ and $\rho \colon\F_{\mls X}^*\mls L\to\mls L$ is the $\mls O$-linear $p$th power map (see \ref{P:2.34}).
    \end{notation}

    We also consider the functor
    \[
        \rH_{\bG_{\mls X}}\colon\Alg_{\mls X}\to\mls D(\Ab),\hspace{1cm}A\mapsto\R\Gamma(\Spec A,\bG_A)
    \]
    obtained by viewing $\bG_{\mls X}$ as a complex concentrated in degree 0 and applying the embedding~\eqref{eq:der cat map to shv}. We will define a certain canonical morphism
    \begin{equation}\label{eq:kan extension map, over a stack}
        \mls C_{\bG_{\mls X}}\to\rH_{\bG_{\mls X}}[1]
    \end{equation}
    of functors $\Alg_{\mls X}\to\mls D(\Ab)$. We will then show in \ref{thm:main kan theorem, over a stack} that this map is an isomorphism.

\begin{pg}[Definition of the morphism~\eqref{eq:kan extension map, over a stack}]
\label{pg:definition of the map via kan extension}
    We first consider the universal case of the moduli stack $\mls B$ of commutative height 1 group schemes over $\bF_p$ and the universal height 1 group scheme $\bG_{\mls B}$ over $\mls B$. As discussed in \ref{pg:moduli stack of height 1 group schemes}, $\mls B$ is smooth over $\bF_p$ and has affine diagonal.
    
    We identify $\Alg_{\mls B}$ with the category whose objects are pairs $(A,\bG_A)$, where $A$ is a ring of characteristic $p$ and $\bG_A$ is a commutative finite locally free height 1 group scheme over $A$, and whose morphisms $(A,\bG_A)\to(B,\bG_B)$ consist of pairs $(\varphi,\psi)$, where $\varphi\colon A\to B$ is a ring homomorphism and $\psi\colon \bG_B\to\bG_A$ is a morphism of schemes such that the diagram
    \[
        \begin{tikzcd}
            \bG_B\arrow{d}\arrow{r}{\psi}&\bG_A\arrow{d}\\
            \Spec B\arrow{r}&\Spec A
        \end{tikzcd}
    \]
    commutes and is cartesian and the induced isomorphism $\bG_B\simeq\bG_A\otimes_AB$ is an isomorphism of group schemes over $B$. The subcategory $\Alg_{\mls B}^{\sm/\bF_p}\subset\Alg_{\mls B}$ is the full subcategory spanned by those pairs $(A,\bG_A)$ such that $A$ is smooth over $\bF_p$.

    Let $P$ be a smooth $\bF_p$-algebra and let $\bG_P$ be a commutative finite locally free height 1 group scheme over $P$. The complex $\mls C_{\bG_P}(P)$ is then identified with the cocone of the right  morphism in the distinguished triangle~\eqref{eq:triangle in smooth case}, and the roof
    \[
        \begin{tikzcd}
            &\R\Gamma(\Spec P,[\bG_P\to\F_{P*}\bG^{(p)}_P])[1]\arrow{dl}{\mathrm{qis}}[swap]{\dlog}\arrow{dr}{\pi}[swap]{\mathrm{qis}}&\\
            \mls C_{\bG_P}(P)&&\R\Gamma(\Spec P,\bG_P)[1]
        \end{tikzcd}
    \]
    determines an isomorphism
    \begin{equation}\label{eq:kan extension map, smooth case}
        \mls C_{\bG_P}(P)\iso\R\Gamma(\Spec P,\bG_P)[1]
    \end{equation}
    in $\mls D(\Ab)$. Varying $(P,\bG_P)$, these maps give an isomorphism
    \begin{equation}\label{eq:iso on restrictions to smooth}
        \mls C_{\bG_{\mls B}}\big\lvert_{\Alg_{\mls B}^{\sm/\bF_p}}\iso \rH_{\bG_{\mls B}}[1]\big\lvert_{\Alg_{\mls B}^{\sm/\bF_p}}
    \end{equation}
    of functors $\Alg_{\mls B}^{\sm/\bF_p}\to\mls D(\Ab)$. By \ref{prop:its a Kan extension} the source $\mls C_{\bG_{\mls B}}$ is the fppf sheafification of the left Kan extension of its restriction to the subcategory $\Alg_{\mls B}^{\sm}\subset\Alg_{\mls B}$. As $\mls B$ is smooth over $\bF_p$, we have inclusions
    \[
        \Alg_{\mls B}^{\sm}\subset\Alg_{\mls B}^{\sm/\bF_p}\subset\Alg_{\mls B},
    \]
    and thus $\mls C_{\bG_{\mls B}}$ is also the sheafification of the left Kan extension of its restriction to the subcategory $\Alg_{\mls B}^{\sm/\bF_p}$. As the functor $\rH_{\bG_{\mls B}}$ is an fppf sheaf, the map~\eqref{eq:iso on restrictions to smooth} extends canonically to a morphism
    \begin{equation}\label{eq:kan extension map}
        \mls C_{\bG_{\mls B}}\to\rH_{\bG_{\mls B}}[1]
    \end{equation}
    of functors $\Alg_{\mls B}\to\mls D(\Ab)$. This defines the morphism~\eqref{eq:kan extension map, over a stack} in the universal case. We define this morphism in the general case by pullback from the universal case.
\end{pg}

    \begin{thm}\label{thm:main kan theorem, over a stack}
        For any algebraic stack $\mls X$ over $\bF_p$ with affine diagonal and any commutative finite locally free height 1 group scheme $\bG_{\mls X}$ over $\mls X$, the morphism~\eqref{eq:kan extension map, over a stack} is an isomorphism.
    \end{thm}
    The proof of \ref{thm:main kan theorem, over a stack} is involved, and will be postponed to \S\ref{sec:proof of kan ext result}. We remark that the result is significantly easier in the special case when $\bG=\balpha_p$, or more generally when $\bG$ is the Frobenius kernel of a vector group (see the proof of \ref{thm:kan extension theorem for a complex}). In the remainder of this section we derive some consequences of Theorem \ref{thm:main kan theorem, over a stack}.

\begin{pg}{\bf Proofs of \ref{thm:flat cohomology is Kan extended from polynomial algebras} and \ref{thm:derived hoobler}.}
    We prove Theorems \ref{thm:flat cohomology is Kan extended from polynomial algebras} and \ref{thm:derived hoobler}, granting Theorem \ref{thm:main kan theorem, over a stack}. Applying the latter to the universal pair $(\mls B,\bG_{\mls B})$, we obtain an isomorphism $\mls C_{\bG_{\mls B}}\simeq\rH_{\bG_{\mls B}}[1]$ of functors $\Alg_{\mls B}\to\mls D(\Ab)$. Recalling the description of the category $\Alg_{\mls B}$ given in \ref{pg:definition of the map via kan extension}, this means that for a pair $(R,\bG_R)$ we have an isomorphism
    \begin{equation}\label{eq:the flatest cohomology}
        \mls C_{\bG_R}(R)\simeq\R\Gamma(\Spec R,\bG_R)[1]
    \end{equation}
    that is functorial with respect to both $R$ and $\bG_R$. By its definition as a cocone, the left  term $\mls C_{\bG_R}(R)$ sits in a distinguished triangle
    \[
        \mls C_{\bG_R}(R)\to\mls L\otimes^{\rL}_R\F_{R*}\rL\Z\Omega^1_R\xrightarrow{\C-\widetilde \rho}\mls L\otimes^{\rL}_R\rL\Omega^1_R\xrightarrow{+1}
    \]
    in $\mls D(\Ab)$, and we obtain the desired triangle by combining this with the isomorphism~\eqref{eq:the flatest cohomology}. As this isomorphism was defined by Kan extension from the subcategory $\Alg_{\mls B}^{\sm}$, the resulting triangle agrees with~\eqref{eq:triangle in smooth case} if $R$ is smooth over $\bF_p$. This completes the proof of \ref{thm:derived hoobler}. Theorem \ref{thm:flat cohomology is Kan extended from polynomial algebras} follows from this combined with \ref{lem:kan extension for C_R}.
\end{pg}


\begin{pg}{\bf Proof of \ref{thm:proj bundle formula}.}
As an application of Theorem \ref{thm:main kan theorem, over a stack} we prove the projective bundle formula for flat cohomology stated in Theorem \ref{thm:proj bundle formula}. This is not used elsewhere in the article so if the reader desires they can skip this section.

We adopt the notation of the statement, so $X$ is an algebraic space over $\bF_p$ and $\mls E$ is a locally free sheaf on $X$ with associated projective bundle $\pi \colon Y := \mathbf{P}(\mls E)\rightarrow X$. We will deduce \ref{thm:proj bundle formula} as a consequence of the following result. Let $(\mls V^{}_X, \rho_X)$ be a pair consisting of a perfect complex $\mls V^{}_X$ on $X$ and a map $\rho_X\colon\rL\F_X^*\mls V^{}_X\to\mls V^{}_X$ in $\mls D(X_{\et},\mls O_{\et})$. Let $(\mls V^{}_Y,\rho_Y)$ be the pullback of this pair to $Y$ (in the sense of \ref{pg:derived p linear maps}). Let $\widetilde{\rho}_X:=\tau\circ\rho_X\colon\mls V^{}_X\to\mls V^{}_X$ (resp. $\widetilde{\rho}_Y$) be the derived $p$-linear map in $\mls D(X_{\et})$ associated to $\rho_X$ (resp. $\rho_Y$). The following relates the functors $\mls C_{(\mls V^{}_X, \rho_X)}\colon\Alg_X\to\mls D(\Ab)$ and $\mls C_{(\mls V^{}_Y,\rho_Y)}\colon\Alg_Y\to\mls D(\Ab)$.
\end{pg}

\begin{thm}\label{T:4.23}  The first Chern class of the universal line bundle on $Y$ induces an isomorphism
\[
    \R\Gamma(Y,\mls C_{(\mls V^{}_Y,\rho_Y)})\simeq\R\Gamma(X,\mls C_{(\mls V^{}_X,\rho_X)})\oplus\R\Gamma(X,[\mls V^{}_X\xrightarrow{1-\widetilde{\rho}_X}\mls V^{}_X])[-1],
\]
where the complex $[\mls V^{}_X\xrightarrow{1-\widetilde{\rho}_X}\mls V^{}_X]$ has terms in degrees 0 and 1.
\end{thm}

In particular, combining \ref{T:4.23} with \ref{thm:main kan theorem, over a stack}, and noting that the map $\boldsymbol{1}-\widetilde{\brho}$ is surjective, we obtain \ref{thm:proj bundle formula} as an immediate consequence. Before proving \ref{T:4.23} we prove the following decomposition for sheaves of differential forms on $X$ and $Y$.

\begin{lem}\label{lem:decompositions of differentials on projective bundle}
Suppose that $X$ is smooth over $\bF_p$. The first Chern class of the universal invertible sheaf $\mls O_Y(1)$ on $Y$ induces isomorphisms
\begin{equation}\label{E:cherniso1}
    \R\pi_*\Omega^1_Y\simeq\Omega^1_X\oplus\mls O_X[-1]
\end{equation}
and
\begin{equation}\label{E:cherniso2}
    \R\pi_*\left(\F_{Y*}\Z\Omega^1_{Y}\right)\simeq\F_{X*}\Z\Omega^1_{X}\oplus \mls O_X[-1].
\end{equation}
\end{lem}
\begin{proof}
Applying $\R\pi_*$ to the canonical short exact sequence of K\"ahler differentials for $\pi$ and using the isomorphism $\mls O_X\simeq\R\pi _*\mls O_Y$ and the projection formula, we find a distinguished triangle
\begin{equation}\label{eq:triangle for kahler differentials}
    \begin{tikzcd}
    \Omega^1_X\arrow{r}&\R\pi_*\Omega^1_Y\arrow{r}&\R\pi_*\Omega^1_{Y/X}\arrow{r}{+1}&\,.
    \end{tikzcd}
\end{equation}
We recall that the first Chern class map
\[
    c_1\colon\Pic(Y)\to\rH^1(Y,\Omega^1_Y)
\]
is the map induced by the map
\[
    \dlog \colon \mls O_Y^{\times}\rightarrow\Omega^1_{Y}, \ \ u\mapsto \mathrm{d}u/u
\]
of small \'{e}tale sheaves. Consider the map $\mls O_Y[-1]\to\Omega^1_{Y}$ in the derived category corresponding to the first Chern class $c_1(\mls O_Y(1))\in\rH^1(Y,\Omega^1_Y)$. Applying $\R\pi_*$ gives a map $c\colon\mls O_X[-1]\to\R\pi_*\Omega^1_Y$. Arguing by restricting to the fibers of $\pi$, we see that this map induces an isomorphism $\mls O_X[-1]\iso\R\pi_*\Omega^1_{Y/X}$. Thus, $c$ splits the above distinguished triangle~\eqref{eq:triangle for kahler differentials}, and induces the desired direct sum decomposition~\eqref{E:cherniso1}.

Next, we note that the map $\dlog$ factors through the subsheaf $\Z\Omega^1_Y\subset\Omega^1_Y$ of closed forms, and we have a commutative diagram
\[
    \begin{tikzcd}
        \F_{Y*}\mls O_Y^{\times}\arrow{r}{\sim}\arrow{d}[swap]{\F_{Y*}\dlog}&\mls O_Y^{\times}\arrow{d}{\dlog}\\
        \F_{Y*}\Z\Omega^1_Y\arrow{r}{\C_Y}&\Omega^1_Y
    \end{tikzcd}
\]
of small \'{e}tale sheaves. Here, the top horizontal arrow is the canonical isomorphism, given on sections over an \'{e}tale $Y$-scheme $U\to Y$ by the pullback map $\F_{U/Y}^*\colon (\F_{Y*}\mls O_Y^{\times})(U)=\mls O_Y^{\times}(U')\iso\mls O_Y^{\times}(U)$. Thus, the first Chern class map factors canonically through $\C_Y$. The invertible sheaf $\mls O_Y(1)$ yields a map $\mls O_Y[-1]\to\F_{Y*}\Z\Omega^1_Y$, and applying $\R\pi_*$ we obtain a map $\tilde{c}\colon\mls O_X[-1]\to\R\pi_*\left(\F_{Y*}\Z\Omega^1_Y\right)$ rendering the diagram
\[
    \begin{tikzcd}
        \mls O_X[-1]\arrow{r}[swap]{\tilde{c}}\arrow[bend left=20]{rr}{c}&\R\pi_*\left(\F_{Y*}\Z\Omega^1_Y\right)\arrow{r}[swap]{\C_Y}&\R\pi_*\Omega^1_Y
    \end{tikzcd}
\]
commutative. Now we apply $\R\pi_*$ to the exact sequence
\[
    \begin{tikzcd}
        0\arrow{r}&\mls O_Y\arrow{r}&\F_{Y*}\mls O_Y\arrow{r}{\mathrm{d}}&\F_{Y*}\Z\Omega^1_Y\arrow{r}{\C_Y}&\Omega^1_Y\arrow{r}&0
    \end{tikzcd}
\]
to obtain a commutative diagram
\[
    \begin{tikzcd}
        \mls O_X\arrow[d,"\sim"' sloped]\arrow{r}&\F_{X*}\mls O_X\arrow[d,"\sim"' sloped]\arrow{r}{\mathrm{d}}&\F_{X*}\Z\Omega^1_X\arrow{d}{\beta}\arrow{r}{\C_X}&\Omega^1_X\arrow{d}{\alpha}\\
        \R\pi_*\mls O_Y\arrow{r}&\R\pi_*\left(\F_{Y*}\mls O_Y\right)\arrow{r}{\mathrm{d}}&\R\pi_*\left(\F_{Y*}\Z\Omega^1_Y\right)\arrow{r}{\C_Y}&\R\pi_*\Omega^1_Y.
    \end{tikzcd}
\]
We have already observed that the map $c$ gives rise to a splitting of $\alpha$. It follows that $\tilde{c}$ induces a splitting of $\beta$, giving the desired decomposition~\eqref{E:cherniso2}.
\end{proof}

\begin{proof}[Proof of \ref{T:4.23}]
Suppose first that $X$ is smooth over $\bF_p$. The restriction of the functor $\R\pi _*\mls C_{(\mls V^{}_Y,\rho_Y)}$ to the (affine) small \'{e}tale site of $X$ is the cocone of the morphism
\begin{equation}\label{eq:pi_* morphism}
    \C-\widetilde{\rho}_Y\colon \R\pi _*(\pi^*\mls V^{}_X\otimes\F_{Y*}\Z\Omega^1_Y)\rightarrow\R\pi _*(\pi^*\mls V^{}_X\otimes \Omega ^1_Y).
\end{equation}
One checks that, under the isomorphisms
\[
    \R\pi _*(\pi ^*\mls V^{}_X\otimes \F_{Y*}\Z\Omega^1_Y)\simeq \mls V^{}_X\otimes\R\pi _*\left(\F_{Y*}\Z\Omega^1_Y\right)\simeq \left(\mls V^{}_X\otimes\F_{X*}\Z\Omega^1_X\right)\oplus \mls V^{}_X[-1], 
\]
and
\[
    \R\pi _*(\pi ^*\mls V^{}_X\otimes \Omega ^1_Y)\simeq \left(\mls V^{}_X\otimes \Omega ^1_X\right)\oplus \mls V^{}_X[-1]
\]
obtained from the decompositions of Lemma \ref{lem:decompositions of differentials on projective bundle}, the map~\eqref{eq:pi_* morphism} is identified with the direct sum of the maps
\[
    \C-\widetilde{\rho}_X\colon\mls V^{}_X\otimes\F_{X*}\Z\Omega^1_X\rightarrow \mls V^{}_X\otimes \Omega ^1_X
\]
and
\[
    1-\widetilde{\rho}_X\colon\mls V^{}_X[-1]\rightarrow \mls V^{}_X[-1].
\]
This implies \ref{T:4.23} in the case when $X$ is smooth over $\bF_p$.

The general case of \ref{T:4.23} follows from the smooth case as follows. Consider the stack $\mathrm{B}\mathbf{GL}_r$ over $\bF_p$ classifying vector bundles of rank $r$ and the universal projective bundle $[\mathbf{P}^{r-1}/\mathbf{GL}_r]\rightarrow \mathrm{B}\mathbf{GL}_r$. As $\mathrm{B}\mathbf{GL}_r$ is smooth over $\bF_p$, we have inclusions
\[
    \Alg_{\mathrm{B}\mathbf{GL}_r}^{\sm}\subset\Alg_{\mathrm{B}\mathbf{GL}_r}^{\sm/\bF_p}\subset\Alg_{\mathrm{B}\mathbf{GL}_r}.
\]
We have have already proved the result when $X$ is an affine scheme which is smooth over $\bF_p$, and in particular when $X$ is an affine scheme smooth over $\mathrm{B}\mathbf{GL}_r$. By Kan extension this implies the result for all affine $X$. For a general $X$ the result may be checked locally, so this implies the result.
\end{proof}

\begin{pg}\label{ssec:complexes of height 1 group schemes}{\bf Generalization to complexes of height 1 group schemes.}
    Let $X$ be an algebraic space over $\bF_p$. It is natural to seek to generalize Theorem \ref{thm:main kan theorem, over a stack} by replacing $\bG$ with a \emph{perfect complex of height 1 group schemes}, that is, an object $\bG^{}\in\mls D(X)$ which is fppf locally on $X$ quasi-isomorphic to a bounded complex of commutative finite locally free height 1 group schemes, and replacing the Lie algebra and its $p$th power map with a suitably chosen pair $(\mls V,\rho)$, where $\mls V$ is a perfect complex on $X$ and $\rho\colon\rL\F_X^*\mls V\to\mls V$ is a map in $\mls D(X_{\et},\mls O_{\et})$. In this section we will make some steps in this direction.

    \begin{lem}\label{lem:C is a hypersheaf}
        For any pair $(\mls V,\rho)$ as above, the functor $\mls C_{(\mls V^{},\rho)}\colon\Alg_{X}\to\mls D(\Ab)$ is an fppf hypersheaf.
    \end{lem}
    \begin{proof}
        By \cite[08C9]{stacks-project} we may choose an fppf cover $U\to X$ over which $\mls V^{}$ is quasi-isomorphic to a strictly perfect complex and $\rho$ is induced by a map $\F_X^*\mls V\to\mls V$ of complexes (note that as $\mls V$ is strictly perfect the Frobenius pullback does not need to be derived). By \cite[6.5.2.22]{LurieHTT}, the property of being an fppf hypersheaf may be checked fppf locally, so we may assume that $\mls V^{}$ is a strictly perfect complex and that $\rho$ is a strict morphism of complexes. Let $\bG^{}$ be the bounded complex of height 1 group schemes corresponding to $(\mls V^{},\rho)$ under the equivalence described in \ref{pg:height 1 group schemes and lie algebras}. By functoriality, the map~\eqref{eq:kan extension map, over a stack} extends to a map
        \[
            \mls C_{(\mls V^{},\rho)}\to\rH_{\bG^{}}[1]
        \]
        of functors $\Alg_{X}\to\mls D(\Ab)$, and Theorem \ref{thm:main kan theorem, over a stack} implies that this map is an isomorphism. As described in \ref{pg:hypercompleteness}, a sheaf $\mls F\in\Shv(X)$ is a hypersheaf if and only if $\mls F$ is in the essential image of the functor $\rH$, so we conclude the result.
    \end{proof}

    \begin{thm}\label{thm:main kan theorem for a perfect complex of height 1 group schemes}
        For any pair $(\mls V,\rho)$ on $X$, there exists a perfect complex of height 1 group schemes $\bG^{}\in\mls D(X)$ (unique up to isomorphism) such that $\mls C_{(\mls V^{},\rho)}\simeq\rH_{\bG^{}}[1]$.
    \end{thm}
        \begin{proof}
            By \ref{lem:C is a hypersheaf} the functor $\mls C_{(\mls V,\rho)}$ is an fppf hypersheaf, so there exists a complex $\bA\in\mls D(X)$ such that $\mls C_{(\mls V,\rho)}\simeq\rH_{\bA}$. Thus, setting $\bG=\bA[-1]$, we have $\mls C_{(\mls V,\rho)}\simeq\rH_{\bG[1]}\simeq\rH_{\bG}[1]$. We claim that $\bG$ is a perfect complex of height 1 group schemes. This can be checked fppf locally on $X$, so we may assume that $\mls V$ is strictly perfect and $\rho$ is induced by a map of complexes. Reasoning as in the proof of \ref{lem:C is a hypersheaf}, the result follows from Theorem \ref{thm:main kan theorem, over a stack}.
        \end{proof}

\end{pg}

\begin{rem}
    Note that we have not described how to associate a pair $(\mls V,\rho)$ to a perfect complex of height 1 group schemes (Theorem \ref{thm:main kan theorem for a perfect complex of height 1 group schemes} describes an association in the reverse direction).
\end{rem}

\begin{pg}\label{pg:perfect complex of vector bundles}
    We now consider a special case in which we can describe explicitly the perfect complex of height 1 group schemes produced in \ref{thm:main kan theorem for a perfect complex of height 1 group schemes}.
    Let $\mls V^{}$ be a perfect complex on $X$ and let $\bV^{}=\R\bV(\mls V^{})\in\mls D(X_{\fppf},\mls O_{\fppf})$ be the associated perfect complex of vector bundles. We write $\bV^{{(p)}}=\rL\F_{X}^*\bV$ for the Frobenius twist of $\bV^{}$ and $\F_{\bV^{}/X}\colon\bV^{}\to\bV^{{(p)}}$ for the relative Frobenius of $\bV^{}$. We define a complex $\bG_{\mls V^{}}\in\mls D(X)$ by
        \[
            \bG_{\mls V^{}}=\cocone\left(\bV^{}\xrightarrow{\F_{\bV^{}/X}}\bV^{(p)}\right).
        \]
    It is immediate that $\bG_{\mls V^{}}$ is a perfect complex of height 1 (and coheight 1) group schemes.
    \begin{example}\label{ex:alpha_p case}
        When $\mls V^{}=\mls O_{X}$, we have $\bV^{}=\bG_a$, and so $\bG_{\mls V^{}}\simeq\balpha_p$.
    \end{example}
    \begin{thm}\label{thm:kan extension theorem for a complex}
        For a perfect complex $\mls V^{}$ over $X$, there is a canonical isomorphism  (to be defined in the proof below)
        \[
            \mls C_{(\mls V^{},0)}\simeq\rH_{\bG_{\mls V^{}}}[1]
        \]
        of functors $\Alg_{X}\to\mls D(\Ab)$.
    \end{thm}
    \begin{proof}
    Consider Hoobler's sequence for $\bG_a$ and a smooth $\bF_p$-algebra $P$. Taking global sections, this yields the exact sequence
    \[
        0\to P\to\F_{P*}P\xrightarrow{\F_{P*}\mathrm{d}}\F_{P*}\Z\Omega^1_P\xrightarrow{\C_P}\Omega^1_P\to 0.
    \]
    We note that the morphisms in this sequence are $P$-linear, so this is an exact sequence of $P$-modules. By Kan extension, this sequence gives for a general $\bF_p$-algebra $A$ an isomorphism
    \begin{equation}\label{eq:triangle for G_a}
        \mls C_{(A,0)}(A)=[\F_{A*}\rL\Z\Omega^1_A\xrightarrow{\C_A}\rL\Omega^1_A]\simeq\R\Gamma(\Spec A,[A\to\F_{A*}A])[1]
    \end{equation}
    in $\mls D(\Mod_A)$ (not merely in $\mls D(\Ab)$). Now suppose we have a map $a\colon\Spec A\to X$. We tensor the above with the pullback $\mls V^{}_A:=\rL a^*\mls V$ and use the projection formula isomorphism $\mls V^{}_A\otimes_A\F_{A*}A\simeq\rL\F_A^*\mls V_A=:\mls V^{(p)}_A$. This yields an isomorphism
    \[
        \mls C_{(\mls V^{},0)}(A)=\R\Gamma(\Spec A,[\mls V^{}_A\to\mls V^{(p)}_A])[1]\simeq\rH_{\bG_{\mls V^{}}}[1](A)
    \]
    Varying $A$ we obtain the desired isomorphism of functors.
    \end{proof}

    \begin{rem}
        Note that, unlike \ref{lem:C is a hypersheaf} and \ref{thm:main kan theorem for a perfect complex of height 1 group schemes}, the proof of \ref{thm:kan extension theorem for a complex} is independent of Theorem \ref{thm:main kan theorem, over a stack}.
    \end{rem}
\end{pg}

\section{Proof of Theorem \ref{thm:main kan theorem, over a stack}}
\label{sec:proof of kan ext result}

Our proof is a generalization of an argument due to Bhatt and Lurie (private communication), who considered the case of $\bmu_p$ 
(see also \cite{BL2} for a different argument in that case). The case of $\bmu _p$ had also previously been observed by Peter Scholze.

\begin{pg}
We first note that it will suffice to prove \ref{thm:main kan theorem, over a stack} in the universal case, that is, for the pair $(\mls B,\bG_{\mls B})$ where $\mls B$ is the moduli stack of commutative height 1 group schemes (see \ref{pg:moduli stack of height 1 group schemes}) and $\bG_{\mls B}$ is the universal height 1 group scheme over $\mls B$. Let $\Spec R\to\mls B$ be a smooth morphism from an affine scheme. We claim that the map
\begin{equation}\label{eq:kan ext map 3}
    \mls C_{\bG_R}\to\rH_{\bG_R}[1]
\end{equation}
of functors $\Alg_R\to\mls D(\Ab)$ is an isomorphism. By \ref{prop:its a Kan extension}, the functor $\mls C_{\bG_R}$ is an fppf sheaf, as is the functor $\rH_{\bG_R}$, so applying this to a smooth cover of each connected component of $\mls B$ by an affine scheme we will obtain the result. By \ref{prop:its a Kan extension} the functor $\mls C_{\bG_R}$ is left Kan extended from the subcategory $\Alg_R^{\sm}\subset\Alg_R$. As $\mls B$ is smooth over $\bF_p$, so is any smooth $R$-algebra, and therefore~\eqref{eq:kan ext map 3} restricts to an isomorphism on $\Alg_R^{\sm}$. Thus, to show that~\eqref{eq:kan ext map 3} is an isomorphism it will suffice to show that $\rH_{\bG_R}$ is left Kan extended from $\Alg_R^{\sm}$.
\end{pg}
    \begin{notation}\label{not:notation for C_R(A)}
        For an $R$-algebra $A$ we write $C_R(A)$ for the category whose objects are smooth $R$-algebras $P$ equipped  with an $R$-algebra homomorphism $P\to A$.
    \end{notation}

By definition, the statement that $\rH_{\bG_R}$ is left Kan extended from $\Alg_R^{\sm}$ is equivalent to the following proposition, whose proof occupies the remainder of the section.

    \begin{prop}\label{prop:reduction to universal case}
        For every $R$-algebra $A$ the canonical map
        \begin{equation}\label{eq:hocolim isomorphism}
            \hocolim_{P\in C_{R}(A)}\R\Gamma(\Spec P,\bG_P)\to\R\Gamma(\Spec A,\bG_A)
        \end{equation}
        is an isomorphism.
    \end{prop}

The proof of \ref{prop:reduction to universal case} is by a series of reductions to the case when $A$ is perfect, which we handle with a direct argument. The key to this reduction is the following proposition.

\begin{prop}\label{prop:infinitesimal thickening induction}
    Let $B\twoheadrightarrow A$ be a surjective homomorphism of $R$-algebras all elements of the kernel of which are nilpotent. If the map~\eqref{eq:hocolim isomorphism} is an isomorphism for $A$, it is also an isomorphism for $B$.
\end{prop}
\begin{proof}
 Consider the diagram
\[
    \begin{tikzcd}
        \displaystyle\hocolim_{P\in C_{R}(B)}\R\Gamma(\Spec P,\bG_P)\arrow{r}\arrow{d}&\R\Gamma(\Spec B,\bG_B)\arrow{d}\\
        \displaystyle\hocolim_{P\in C_{R}(A)}\R\Gamma(\Spec P,\bG_P)\arrow{r}{\sim}&\R\Gamma(\Spec A,\bG_A).
    \end{tikzcd}
\]
It follows from the triangle~\eqref{eq:triangle in smooth case} that the cohomology $\R\Gamma(\Spec P,\bG_P)$ is concentrated in degrees $[0,2]$. Thus, all of the terms of the above diagram, excepting possibly $\R\Gamma(\Spec B,\bG_B)$, are in $\mls D^{\leq 2}(\Ab)$. In fact, by \cite[5.2.10]{CS}, the reduction map $\rH^i(\Spec B,\bG_B)\to\rH^i(\Spec A,\bG_A)$ is surjective for $i\geq 1$ and bijective for $i\geq 2$. Thus, $\R\Gamma(\Spec B,\bG_B)$ is also in $\mls D^{\leq 2}(\Ab)$.

This enables us to reformulate \ref{prop:infinitesimal thickening induction} in terms of torsors and gerbes, which represent cohomology classes in degrees 1 and 2, respectively. For an $R$-algebra $C$ we let $\Gb(C)$ denote the $2$-groupoid of $\bG_C$-gerbes over $C$. The set of isomorphism classes of  $\Gb (C)$ is $\rH ^2(C, \bG _C)$, the set of isomorphism classes of automorphisms of the neutral gerbe $\mathrm{B}\bG_C\in\Gb (C)$ is $\rH ^1(C, \bG _C)$, and the group of automorphisms of the identity automorphism of $\mathrm{B}\bG_C$ is $\rH ^0(C, \bG _C)$.  Therefore the $2$-groupoid $\Gb (C)$ captures all the cohomological information about $\bG _C$ over $C$.

Consider the diagram
\begin{equation}\label{eq:square of cats of gerbes}
    \begin{tikzcd}
        \displaystyle\hocolim_{P\in C_{R}(B)}\Gb(P)\arrow{r}\arrow{d}&\Gb(B)\arrow{d}\\
        \displaystyle\hocolim_{P\in C_{R}(A)}\Gb(P)\arrow{r}{\sim}&\Gb(A).
    \end{tikzcd}
\end{equation}
It follows from our assumptions that the lower horizontal arrow is an isomorphism. To prove Proposition \ref{prop:infinitesimal thickening induction}, it will suffice to show that the upper horizontal arrow is also an isomorphism. 

\begin{lem}\label{L:11.9}
Let $Q$ be a smooth $R$-algebra equipped with an $R$-algebra homomorphism $Q\to B$. Let $\mls G_Q$ be a $\bG_Q$-gerbe over $Q$ and let $\alpha_B\colon\Spec(B)\rightarrow \mls G_Q$ be a morphism over $Q$. There exists a smooth $Q$-algebra $Q'$, $R$-algebra homomorphisms $Q\to Q'\to B$, and a smooth morphism $\alpha_{Q'}\colon\Spec Q'\to\mls G_Q$ such that the diagram
\begin{equation}\label{E:11.9.1}
\begin{tikzcd}
    &&\mls G_Q\arrow{d}\\
    \Spec B\arrow[bend left=20]{urr}{\alpha_B}\arrow{r}&\Spec Q'\arrow{ur}{\alpha_{Q'}}\arrow{r}&\Spec Q
\end{tikzcd}
\end{equation}
commutes.
\end{lem}
\begin{proof}
Write $i\colon \Spec A\to\Spec B$ for the map induced by the surjection $B\twoheadrightarrow A$ and set $\alpha_A=\alpha_B\circ i$. Then $\alpha_A$ induces a trivialization of the pullback of $\mls G_Q$ to $A$. The functor $\Alg_Q\to\Alg_R$ admits a left adjoint given by forming the fiber product over $R$. As $Q$ is smooth over $R$, this restricts to a left adjoint for the restriction $\Alg^{\sm}_Q\to\Alg^{\sm}_R$, and to a left adjoint for the induced map $C_R(Q)\to C_R(A)$. By \ref{L:3.15c} it follows that the canonical map
\[
     \hocolim_{P\in C_Q(A)}\Gb(P)\iso\hocolim_{P\in C_R(A)}\Gb(P)
\]
is an equivalence. By assumption, the right colimit maps isomorphically to $\Gb(A)$. We may therefore find a smooth $Q$-algebra $Q'$, an $R$-algebra homomorphism $Q'\to A$, and a morphism $\alpha_{Q'}\colon\Spec Q'\to\mls G_Q$ whose pullback to $A$ is isomorphic to $\alpha_A$. Thus, we have a $2$-commutative diagram 
\[
    \begin{tikzcd}
    &&\mls G_Q\arrow{d}\\
    \Spec A\arrow[bend left=25]{urr}{\alpha_A}\arrow{r}&\Spec Q'\arrow{ur}{\alpha_{Q'}}\arrow{r}&\Spec Q.
\end{tikzcd}
\]
Moreover, as $Q'$ is smooth over $R$ the map $Q'\to A$ factors through $B$. Replacing $Q$ by $Q'$, we are therefore reduced to the case when $\mls G_{Q}=\mathrm{B}\bG_Q$. The map $\alpha_B$ then corresponds to a $\bG_B$-torsor $T_{B}\to\Spec B$ over $B$. Let $T_A=T_B\otimes_BA$ be its restriction to $A$. Applying the above argument again, we may assume by replacing $Q$ with a smooth $Q$-algebra that $T_A$ lifts to a $\bG_Q$-torsor $T_Q\to\Spec Q$ over $Q$. By replacing $T_B$ with the difference of $T_B$ and the pullback $T_Q\otimes_QB$, we may assume that the reduction $T_A$ of $T_B$ is the trivial $\bG_A$-torsor. Now consider the B\'egueri resolution
\[
    0\to\bG_R\to\bB^0\to\bB^1\to 0
\]
for $\bG_R$ (see \ref{pg:begueri resolution}). The torsor $T_B\wedge^{\bG}\bB^0$ is a deformation of the trivial $\bB^0$-torsor over $A$, and $\bB^0$ is smooth, so $T_B\wedge^{\bG_B}\bB^0$ is trivial. From the long exact sequence
\[
    0\to\bG_R(B)\to\bB^0(B)\to\bB^1(B)\to\rH^1(\Spec B,\bG_B)\to\rH^1(\Spec B,\bB^0)\to\ldots
\]
we see that $T_B$ is isomorphic to the $\bG_B$-torsor of liftings of some section $s_B\in\bB^1_B(B)$ to $\bB^0_B$. Let $s\colon\Spec B\to\bB^1_Q$ denote the morphism induced by $s_B$. Let $\beta\colon\Spec Q\to\mathrm{B}\bG_Q$ be the canonical quotient map and let $\alpha\colon\bB^1_Q\to\mathrm{B}\bG_Q$ be the map corresponding to the $\bG_Q$-torsor $\bB^0_Q\rightarrow \bB^1_Q$. We claim that $\alpha$ is a smooth morphism and that the diagram
\[
    \begin{tikzcd}
        &&\mathrm{B}\bG_Q\arrow{d}\\
        \Spec B\arrow[bend left=25]{urr}{\alpha_B}\arrow{r}{s}&\bB^1_Q\arrow{ur}{\alpha}\arrow{r}&\Spec Q
    \end{tikzcd}
\]
is 2-commutative. Both these claims follows from a consideration of the diagram
\[
    \begin{tikzcd}
        T_B\arrow{d}\arrow{r}&\bB^0_Q\arrow{d}\arrow{r}&\Spec Q\arrow{d}{\beta}\\
        \Spec B\arrow{r}{s}&\bB^1_Q\arrow{r}{\alpha}&\mathrm{B}\bG_Q
    \end{tikzcd}
\]
in which both squares are 2-cartesian. As $\bB^1_Q$ is affine, this completes the proof.
\end{proof}

We now complete the proof of Proposition \ref{prop:infinitesimal thickening induction} by showing that the canonical map
\begin{equation}\label{eq:hocolim for B}
    \hocolim_{P\in C_R(B)}\Gb(P)\to\Gb(B)
\end{equation}
is an isomorphism. Let $\mls G_B$ be a $\bG_B$-gerbe over $B$ and let $\mls F(\mls G_B)$ be the $2$-category of tuples $(P\to B,\mls G_P,\psi)$, where $P$ is a smooth $R$-algebra, $P\to B$ is a map of $R$-algebras, $\mls G_P$ is a $\bG_P$ gerbe over $P$, and $\psi\colon\mls G_B\to\mls G_P\otimes_PB$ is an equivalence of $\bG_B$-gerbes over $B$.
To show that~\eqref{eq:hocolim for B} is an isomorphism it will suffice to show that for any $\bG_B$-gerbe $\mls G_B$ over $B$ the category $\mls F(\mls G_B)$ is contractible.
   
We first show that $\mls F(\mls G_B)$ is non-empty. Set $\mls G_A:=\mls G_B\otimes_BA$. As the lower horizontal arrow of the diagram~\eqref{eq:square of cats of gerbes} is an equivalence, we can find a smooth $R$-algebra $P$, an $R$-algebra map $P\to A$, and a $\bG_P$-gerbe $\mls G_P$ over $P$ whose restriction to $A$ is isomorphic to $\mls G_A$. As $P$ is smooth over $R$ the map $P\to A$ factors as a composition $P\to B\twoheadrightarrow A$. The pullback of $\mls G_P$ along $\Spec B\to\Spec P$ has restriction to $A$ isomorphic to $\mls G_A$. The reduction map $\rH^2(B,\bG_B)\to\rH^2(A,\bG_A)$ is injective \cite[5.2.10]{CS}, so the pullback of $\mls G_P$ along $\Spec B\to\Spec P$ is isomorphic to $\mls G_B$. Thus, $\mls F(\mls G_B)$ is nonempty.

Fix two objects $(P_i\rightarrow B, \mls G_{P_i}, \psi_i), \ (i=1,2)$ of $\mls F(\mls G_B)$. To show that $\mls F(\mls G_B)$ is contractible it will suffice to show that the category of cospans
\[
    \begin{tikzcd}
        (P_1\to B,\mls G_{P_1},\psi_1)\arrow{dr}&&(P_2\to B,\mls G_{P_2},\psi_2)\arrow{dl}\\
        &(P\to B,\mls G_P,\psi)&
    \end{tikzcd}
\]
in $\mls F(\mls G_B)$ is contractible. Equivalently, by setting $Q=P_1\otimes_{R}P_2$, letting $\mls G_Q\rightarrow \Spec Q$ denote the $\bG_Q$-gerbe given by the difference of the pullbacks of the $\mls G_{P_i}$, and replacing $\mls G_B$ and $\mls G_P$ with their differences with the respective pullbacks of $\mls G_{P_2}$, it will suffice to show that
the category $\mls F_0$ of 2-commutative diagrams (the solid arrows being fixed and the dashed arrows being allowed to vary)
\begin{equation}\label{eq:diagram for F}
\begin{tikzcd}
    &&\mls G_Q\arrow{d}\\
    \Spec B\arrow[bend left=20]{urr}{\alpha_B}\arrow[dashed]{r}&\Spec P\arrow[dashed]{ur}{\alpha_{P}}\arrow[dashed]{r}&\Spec Q
\end{tikzcd}
\end{equation}
over $R$, where $P$ is a smooth $R$-algebra, is contractible. By Lemma \ref{L:11.9} the category $\mls F_0$ is nonempty, and moreover we may find an object $\Spec B\to\Spec Q'\xrightarrow{\alpha_{Q'}}\mls G_Q$ of $\mls F_0$ with the additional property that $\alpha_{Q'}$ is a smooth morphism. Given an arbitrary object  $\Spec B\to\Spec P\xrightarrow{\alpha_P}\mls G_Q$ of $\mls F_0$, we may form the pullback diagram
\[
    \begin{tikzcd}
        &\Spec P\times_{\mls G_Q}\Spec Q'\arrow{r}\arrow{d}&\Spec Q'\arrow{d}{\alpha_{Q'}}\\
        \Spec B\arrow{ur}\arrow{r}&\Spec P\arrow{r}{\alpha_P}&\mls G_Q.
    \end{tikzcd}
\]
As the diagonal of $\mls G_Q$ is affine, the scheme $\Spec P\times_{\mls G_Q}\Spec Q'$ is also affine. Furthermore, the morphism $\alpha_{Q'}$ is smooth, so the projection $\Spec P\times_{\mls G_Q}\Spec Q'\to\Spec P$ is smooth. As $P$ is smooth over $R$, it follows that $\Spec P\times_{\mls G_Q}\Spec Q'$ is smooth over $R$. We therefore get a canonical path in $\mls F_0$ from any object to our fixed object, showing that $\mls F_0$ is contractible. This completes the proof of Proposition \ref{prop:infinitesimal thickening induction}.
\end{proof}

\begin{pg}{\bf Proof of \ref{prop:reduction to universal case} when $A$ is perfect.}\end{pg}

\begin{lem}\label{lem:frob pullback lemma}
    For a scheme $S$ of characteristic $p$ and a commutative group scheme $\bG$ over $S$ which is either finite locally free or smooth, the map
    \begin{equation}\label{eq:frob pullback on coho}
        \F_S^*\colon \R\Gamma(S,\bG)\to\R\Gamma(S,\bG')
    \end{equation}
    induced by pullback along the absolute Frobenius $\F_S\colon S\to S$ is canonically isomorphic to the map on derived global sections induced by the map $\F_{\bG/S}\colon\bG\to\bG'$ of group schemes over $S$ (here  $\bG ':= \bG \times _{S. \F_S}S$). In particular, if $\bG$ is finite locally free of height $1$, then~\eqref{eq:frob pullback on coho} is zero.
\end{lem}
\begin{proof}
    For a group scheme $\bG$ over $S$, we define a morphism $\tau\colon\F_{S*}\bG'\to\bG'$ of fppf sheaves by associating to an $S$-scheme $U\to S$ the map
    \[
        \tau(U)\colon(\F_{S*}\bG')(U)=\bG'(U')\xrightarrow{\bG'(\F_{U/S})}\bG'(U).
    \]
    One checks that the diagram
    \begin{equation}\label{eq:diagram for tau}
        \begin{tikzcd}
            \bG\arrow{r}\arrow[bend left=25]{rr}{\F_{\bG/S}}&\F_{S*}\bG'\arrow{r}[swap]{\tau}&\bG'
        \end{tikzcd}
    \end{equation}
    commutes, where the left map is the canonical adjunction map. Let $\epsilon\colon S_{\fppf}\to S_{\et}$ denote the projection. Write $\mls G=\epsilon_*\bG$ for the restriction of $\bG$ to $S_{\et}$. If $U\to S$ is \'{e}tale, then the relative Frobenius $\F_{U/S}$ is an isomorphism, and thus $\tau(U)\colon (\F_{S*}\bG')(U)\to\bG'(U)$ is an isomorphism. It follows that $\tau$ induces an isomorphism $\tau^{\et}\colon\F_{S*}^{\et}\mls G'\iso \mls G'$ of small \'{e}tale sheaves.

    We now prove the result. The map~\eqref{eq:frob pullback on coho}
    is identified under the isomorphism $\R\Gamma(S,\rule{.25cm}{0.4pt})=\R\Gamma(S,\R\F_{S*}(\rule{.25cm}{0.4pt}))$ with the map on derived global sections induced by the derived adjunction map
    \[
        \eta\colon\bG\to\R\F_{S*}\bG'.
    \]
    Suppose that $\bG$ is smooth. By a result of Grothendieck \cite[Th\'eor\`eme 11.7]{MR0244271}, we have $\R^i\F_{S*}\bG=0$ and $\R^i\epsilon_*\bG=0$ for $i>0$. It follows that $\R\F_{S*}\bG'=\F_{S*}\bG'$ and $\R\epsilon_*\bG=\mls G$. Applying first $\R\epsilon_*$ and then $\R\Gamma(S_{\et},\rule{.25cm}{0.4pt})$ to the diagram~\eqref{eq:diagram for tau} we obtain a commutative diagram
    \[
        \begin{tikzcd}
            \R\Gamma(S,\bG)\arrow{r}{\R\Gamma(\eta)}\arrow[bend left=25]{rr}{\F_{\bG/S}}&\R\Gamma(S,\R\F_{S*}\bG')\arrow{r}{\sim}&\R\Gamma(S,\bG').
        \end{tikzcd}
    \]
    This gives the result in the case when $\bG$ is smooth. If $\bG$ is finite locally free, the result follows from the B\'{e}gueri resolution and the smooth case.
\end{proof}
    
We now suppose that $A$ is a perfect $\bF_p$-algebra, and show that the map~\eqref{eq:hocolim isomorphism} is an equivalence. By Lemma \ref{lem:frob pullback lemma}, the map 
\[
    \F_A^*\colon \R\Gamma(\Spec A,\bG_A)\to\R\Gamma(\Spec A,\bG_A')
\]
given by pullback along the absolute Frobenius of $\Spec A$ is trivial. On the other hand, as $A$ is perfect, $\F_A$ is an isomorphism. Thus, the map $\F_A^*$ is also an isomorphism, and therefore $\R\Gamma(\Spec A,\bG_A)=0$. To prove the result it will therefore suffice to show that the homotopy colimit on the left  side of~\eqref{eq:hocolim isomorphism} is also trivial.

For an $R$-algebra $\varphi\colon R\to A$ we write $A_{(1)}$ for the ring $A$ regarded as an $R$-algebra by the composition $R\xrightarrow{\F_R}R\xrightarrow{\varphi} A$. We note that the absolute Frobenius of $A$ gives a map $\F_A\colon A\to A_{(1)}$ of $R$-algebras. We consider the functors
\[
    \alpha,\beta\colon C_R(A)\to C_R(A_{(1)})
\]
defined by
$\alpha(P\xrightarrow{\varphi}A)=(P\xrightarrow{\varphi}A\xrightarrow{\F_A}A_{(1)})$ and $\beta(P\xrightarrow{\varphi}A)=(\varphi_{(1)}:P_{(1)}\to A_{(1)})$. For an object $\varphi\colon P\to A$ of $C_R(A)$, we have a commutative diagram
    \[
        \begin{tikzcd}
            P\arrow{d}[swap]{\varphi}\arrow{r}{\F_P}&P_{(1)}\arrow{d}{\varphi_{(1)}}\\
            A\arrow{r}{\F_A}&A_{(1)}
        \end{tikzcd}
    \]
    of $R$-algebras. Thus, the absolute Frobenius $\F_P\colon P\to P_{(1)}$ gives a morphism $\alpha(P\xrightarrow{\varphi}A)\to\beta(P\xrightarrow{\varphi}A)$
    in the category $C_R(A_{(1)})$. These morphisms define a natural transformation $\theta\colon\alpha\to\beta$ of functors. As described in \cite[A.3]{MR1870516}, this gives rise to a diagram
    \[
        \begin{tikzcd}[column sep=tiny]
            \displaystyle\hocolim_{P\in C_R(A)}\R\Gamma(\Spec P,\bG_P)\arrow{rr}{\theta_*}\arrow{dr}{\phi_{\alpha}}&&
            \displaystyle\hocolim_{P\in C_R(A)}\R\Gamma(\Spec P_{(1)},\bG_P')\arrow{dl}[swap]{\phi_{\beta}}\\
            &\displaystyle\hocolim_{P\in C_R(A_{(1)})}\R\Gamma(\Spec P,\bG_P)&
        \end{tikzcd}
    \]
    of homotopy colimits which is commutative up to homotopy. It follows from our assumption that $A$ is perfect that $\alpha$ and $\beta$ are both equivalences of categories. Thus, the maps $\phi_{\alpha}$ and $\phi_{\beta}$ are equivalences, and so $\theta_*$ is also an equivalence. On the other hand, we have that $\theta_*$ is the map induced by the functors
    \[
        \theta_*(P)\colon\R\Gamma(\Spec P,\bG_P)\to\R\Gamma(\Spec P_{(1)},\bG_P')
    \]
    coming from the natural transformation $\rH_{\bG_A}\circ\alpha\to\rH_{\bG_A}\circ\beta$ induced by $\theta $.  Thus, $\theta_*(P)$ is the map given by pullback along the absolute Frobenius $\F_P\colon P\to P_{(1)}$, and so by Lemma \ref{lem:frob pullback lemma} is trivial. It follows that $\theta_*$ is trivial, and so all of these homotopy colimits are trivial.

This completes the proof of \ref{prop:reduction to universal case} in the case when $A$ is perfect. \qed

\begin{pg}{\bf Proof of \ref{prop:reduction to universal case}  in general.}
We now complete the proof of \ref{prop:reduction to universal case}, and hence \ref{thm:main kan theorem, over a stack}. Recall that a ring $R$ of characteristic $p$ is \emph{semiperfect} if the absolute Frobenius $\F_R\colon R\to R$ is surjective.
\end{pg}
\begin{lem}\label{lem:fpqc cover of a ring}
    Any ring $R$ of characteristic $p$ admits an fpqc cover by a semiperfect ring.
\end{lem}
\begin{proof}
    If $R$ is a polynomial ring over $\bF_p$ (possibly infinitely generated) then the perfection $R\to R^{\pf}$ is faithfully flat. For a general $R$, we may write $R$ as a quotient $P\twoheadrightarrow R$ of a polynomial ring over $\bF_p$. Then the tensor product $R\otimes_PP^{\pf}$ is semiperfect, and the map $R\to R\otimes_PP^{\pf}$ is faithfully flat.
\end{proof}

To complete the prof of \ref{prop:reduction to universal case} note that by \ref{prop:its a Kan extension} the functor $\mls C_{\bG_R}$ is an fpqc sheaf, as is the functor $\rH_{\bG_R}$ by \cite[5.5.2]{CS}. Also note that if $A$ is a semiperfect $R$-algebra then the self tensor product $A\otimes_RA$ is also semiperfect. The same is true for the $n$-fold self tensor product. It will thus suffice to prove \ref{prop:reduction to universal case} for a semiperfect $A$. For such an $A$, the map $A\to A^{\pf}$ is surjective and has kernel consisting of only nilpotents. By the perfect case we know the result for $A^{\pf}$, and applying \ref{prop:infinitesimal thickening induction} gives the result for $A$.  This completes the proof of \ref{prop:reduction to universal case}. \qed

\section{Algebraic complexes}\label{S:section10}


In this section we study various algebraicity properties of complexes of sheaves of abelian groups on the big fppf site of an algebraic space $S$. We identify a class of complexes $\bC\in\mls D(S)$ which we call \emph{stably algebraic}. Examples of stably algebraic complexes include perfect complexes of vector bundles and shifts of flat group algebraic spaces which are locally of finite presentation over $S$. Furthermore, the collection of stably algebraic complexes is closed under shifts and cones and forms a stable $\infty$-subcategory $\mls D^{\salg}(S)\subset\mls D(S)$, and membership in $\mls D^{\salg}(S)$ my be checked fppf locally on $S$. The main results from this section that we will use later relate properties of a stably algebraic complex $\bC\in\mls D(S)$ to the representability of the individual cohomology sheaves $\mls H^n(\bC)$ (see Corollaries \ref{cor:coho sheaves of a stably alg complex}, \ref{cor:generic representability of stably alg complex}, and \ref{cor:cohomology of a stably algebraic complex is constructible}).

We approach algebraicity from the perspective of algebraic $n$-stacks, as developed by Simpson \cite{Simpson}, To\"en-Vezzosi \cite{MR2394633}, Lurie \cite{lurie2009derived}, and others. For the convenience of the reader we do not assume familiarity with these works, and summarize the key aspects of the theory of higher algebraic stacks that we will need, following to large extent \cite{Simpson}.  The main technical difference between the approach here and that in loc. cit. is that we work with stacks admitting fppf covers rather than smooth covers. 
 In the case of algebraic spaces and stacks in the classical sense this leads to an equivalent notions due to a theorem of Artin \cite[\href{https://stacks.math.columbia.edu/tag/06DC}{Tag 06DC}]{stacks-project}.  This result has been generalized to arbitrary $n$-stacks by Toen \cite{Toen3}, though we will not use this in what follows. 


\begin{rem}
 The content of this section is also related to the work of Hartshorne \cite{MR1656482}, which can be viewed as studying the problem of algebraicity for individual cohomology sheaves associated to perfect complexes of coherent sheaves.
\end{rem}

\newcommand{\sShv}{\mathscr{S}\!\mathscr{h}\!\mathscr{v}_{\Spc}}
\newcommand{\Spc}{\text{\rm Spc}}
\newcommand{\DK }{\text{\rm DK}}
\newcommand{\st }{\text{\rm st}}
\newcommand{\bX}{\mathbf{X}}
\newcommand{\bY}{\mathbf{Y}}

\begin{pg}\label{SS:5.5}{\bf Algebraic $\infty$-stacks.} Let $S$ be an algebraic space. Let $\Spc$ denote the $\infty$-category of spaces as defined in \cite[1.2.16.1]{LurieHTT}. As in \ref{not:infinity sheaves} we have the $\infty$-category $\sShv(S)$ of sheaves on $S_{\text{\rm fppf}}$ taking values in $\Spc$ \cite[6.2.2.6]{LurieHTT}. For an integer $n\geq 0$ let $\sShv^{\leq n}(S)\subset \sShv (S)$ denote the $\infty $-category of $n$-truncated objects \cite[6.4.1]{LurieHTT}.  We refer to objects of $\sShv^{\leq n}(S)$ as \emph{$n$-stacks} (over $S$).
\end{pg}
\begin{pg}\label{pg:definition of an algebraic n stack}
Following the method of \cite{Simpson} we now define by induction on $n\geq 0$ the following notions:
\begin{enumerate}
    \item [(i)] What it means for an $n$-stack $\mls X\in \sShv^{\leq n}(S)$ to be \emph{algebraic}.
    \item [(ii)] What it means for a morphism $f\colon\mls X\to\mls Y$ of algebraic $n$-stacks to be an \emph{fppf cover}
\end{enumerate}
\end{pg}

\begin{pg}[Base case]
For $n=0$ the category $\sShv^{\leq 0}(S)$ is equivalent to the category of sheaves of sets, and in this case we define an object $\mls X \in \sShv^{\leq 0}(S)$ to be \emph{algebraic} if it is an algebraic space in the usual sense.
We define (ii) as for algebraic spaces.
\end{pg}

\begin{pg}[Induction step]
Assume that $n>0$ and that the notions (i) and (ii) have been defined for $0\leq m<n$. Let $\mls X \in \sShv^{\leq n}(S)$ be an $n$-stack. For an $S$-scheme $U$ and sections $u, u'\in \mls X (U)$ let $\I _{u, u'}$ denote $\mls X \times _{\Delta , \mls X \times \mls X , u\times u'}U$.
Observe that since the diagonal map is a section of either projection $\mls X \times \mls X \rightarrow \mls X$ it induces an injection on homotopy sheaves $\pi _{i}(\mls X )\rightarrow \pi _{i}(\mls X \times \mls X)$ for all $i$, and therefore $\I _{u, u'}\in \sShv^{\leq n-1}(U)$.
By induction we therefore know what it means for $\I _{u, u'}$ to be algebraic. Similarly, for two $S$-schemes $U$ and $V$ with sections $u\in \mls X (U)$ and $v\in \mls X (V)$, the homotopy fiber product $U\times_{u,\mls X,v}V$ fits into the  homotopy pullback diagram
\[
    \begin{tikzcd}
        U\times_{u,\mls X,v}V\arrow{d}\arrow{r}&U\times_SV\arrow{d}{u\times v}\\
        \mls X\arrow{r}{\Delta}&\mls X\times_S\mls X.
    \end{tikzcd}
\]
and therefore $U\times_{u,\mls X,v}V\in \sShv^{\leq n-1}(U\times _SV)$.
\end{pg}

\begin{defn} We now define (i) and (ii) for $n$-stacks. We say that an $n$-stack $\mls X$ is \emph{algebraic} if the following conditions hold:
\begin{enumerate}
    \item\label{item:inductive step 1} The diagonal $\Delta \colon \mls X \rightarrow \mls X \times \mls X $ is representable by algebraic $(n-1)$-stacks: For every $S$-scheme $U$ and sections $u, u'\in \mls X (U)$ the homotopy fiber product $\I _{u, u'}\in \sShv^{\leq n-1}(U)$ is an algebraic $(n-1)$-stack.
    \item\label{item:inductive step 2} $\mls X$ admits an fppf cover by a scheme: There exists an $S$-scheme $U$ and section $u\in \mls X (U)$ such that for every $S$-scheme $V$ and $v\in \mls X (V)$ the homotopy fiber product $U\times_{u,\mls X,v}V\in\sShv^{\leq n-1}(V)$ is algebraic and the morphism $U\times_{u,\mls X,v}V\to V$ is an fppf cover (note that this notion has been defined by our induction hypothesis).
\end{enumerate}
Let $f\colon\mls X\to\mls Y$ be a morphism of algebraic $n$-stacks. We say that $f$ is an \emph{fppf cover} if there exist $S$-schemes $U,V$, sections $u\in\mls X(U)$ and $v\in\mls Y(V)$ having the properties of~\eqref{item:inductive step 2} above, and a commutative diagram
\begin{equation}\label{eq:cover-cover diagram}
    \begin{tikzcd}
        U\arrow{d}[swap]{u}\arrow{r}{g}&V\arrow{d}{v}\\
        \mls X\arrow{r}{f}&\mls Y
    \end{tikzcd}
\end{equation}
where $g$ is an fppf cover of schemes.
This completes the inductive step and hence the definition of the notions (i) and (ii).
\end{defn}

\begin{defn}
    We say that an object $\mls X \in \sShv (S)$ is \emph{algebraic} if it is isomorphic to the image of an algebraic $n$-stack under the embedding $\sShv^{\leq n}(S)\hookrightarrow\sShv(S)$ for some $n\geq 0$. To distinguish from the usual terminology of algebraic stacks we will refer to an algebraic object $\mls X \in \sShv(S)$ as an \emph{algebraic $\infty$-stack}. We remark that an $n$-stack is algebraic if and only if it is algebraic viewed as an $(n+1)$-stack, as follows from the same argument used in \cite[Proof of 1.2]{Simpson}. Thus an $n$-stack is algebraic if and only if it is an algebraic $\infty$-stack when viewed as an object of $\sShv(S)$.
\end{defn}

\begin{pg}\label{ssec:properties of algebraic stacks}{\bf Properties of algebraic $\infty$-stacks and their morphisms.}
In \cite{Simpson} a theory of $n$-stacks satisfying certain basic properties is assumed to exist and the article then proceeds to draw consequences for the theory of algebraic $n$-stacks assuming appropriate foundations.  With the foundations provided by \cite{LurieHTT} as discussed above, the results of \cite{Simpson} can then be applied. As noted above we work in this article with fppf covers rather than smooth covers as in \cite{Simpson}.  Nonetheless, for the following  basic properties of algebraic $n$-stacks the arguments of \cite{Simpson} can be copied over replacing only ``smooth'' with ``fppf'' and we simply give the reference.
\end{pg}

\begin{lem}\label{lem:algebraic stacks and fppf cover}
    If $\mls X \in \sShv (S)$ is a stack for which there exists an fppf cover $U\rightarrow S$ such that the base change $\mls X \times _SU\in \sShv (U)$ is algebraic, then $\mls X $ is algebraic.
\end{lem}
\begin{proof}
    See \cite[2.2 and 2.5]{Simpson}.
\end{proof}

\begin{lem}\label{lem:fiber product of algebraic stacks}
    If $\mls X \rightarrow \mls Z$ and $\mls Y\rightarrow \mls Z$ are morphisms of algebraic $\infty$-stacks then the homotopy fiber product $\mls X \times _{\mls Z }\mls Y$ is algebraic.
\end{lem}
\begin{proof}
    See \cite[2.1]{Simpson}.
\end{proof}


Certain properties of algebraic spaces and morphisms of algebraic spaces extend in a formal way to algebraic $\infty$-stacks. Let $\mls P$ be a property of morphisms of algebraic spaces which is fppf local on the source and target, is preserved under arbitrary base change on the target, and is held by all isomorphisms.
For example, we may take $\mls P$ to be the property of being \emph{flat},  \emph{surjective}, \emph{faithfully flat}, \emph{locally of finite type}, or \emph{locally of finite presentation}.

\begin{defn}\label{def:properties of stacks}
     We say that a morphism $f\colon\mls X\to\mls Y$ of algebraic $\infty$-stacks \emph{has property $\mls P$} if there exist $S$-schemes $U,V$, fppf covers $u\colon U\to\mls X$ and $v\colon V\to\mls Y$, and a diagram~\eqref{eq:cover-cover diagram} such that the morphism $g\colon U\to V$ has property $\mls P$.
\end{defn}

We note that this agrees with our previous definition of an fppf cover in \ref{pg:definition of an algebraic n stack}. 

\begin{lem}\label{lem:P lemma}
    If $\mls P$ is a property of morphisms of algebraic spaces as above, then the resulting notion for morphisms of algebraic $\infty$-stacks is again fppf local on the source and target and is preserved under arbitrary base change on the target.
\end{lem}
\begin{proof}
    See the proof of \cite[1.1]{Simpson}. 
\end{proof}

\begin{defn}\label{def:qc qs}
    We say that an algebraic $\infty$-stack $\mls X$ is \emph{quasicompact} over $S$ if there exists an $S$-scheme $U$ and a surjective morphism $U\to\mls X$ such that the composition $U\to\mls X\to S$ is a quasi-compact morphism. A morphism $f\colon\mls X\to\mls Y$ of algebraic $\infty$-stacks is \emph{quasicompact} if for every $S$-scheme $U$ and section $u\in\mls Y(U)$ the fiber product $\mls X\times_{f,\mls Y,u}U$ is quasicompact over $U$.
\end{defn}

\begin{lem}\label{lem:qc lemma}
    The property of a morphism of algebraic $\infty$-stacks being quasicompact is fppf local on the target (but not on the source), is preserved under arbitrary base change on the target, and is stable under compositions.
\end{lem}
\begin{proof}
    This follows from the same argument proving the corresponding result for ordinary stacks \cite[0DQK]{stacks-project}.
\end{proof}

\begin{lem}\label{lem:lfp and qc lemma}
    Let $f\colon\mls X\to\mls Y$ be a morphism of algebraic $\infty$-stacks over $S$. If $\mls X$ is locally of finite presentation over $S$ and $\mls Y$ is locally of finite type over $S$, then $f$ is locally of finite presentation.
\end{lem}
\begin{proof}
    This again follows from the same argument as for ordinary stacks  \cite[06Q6]{stacks-project}.
\end{proof}

\begin{pg}{\bf The Dold--Kan correspondence.}
The Dold--Kan correspondence defines a functor
\[
    \DK \colon \mls D^{\leq 0}(\Ab )\rightarrow \Spc 
\]
with the property that for $\C \in \mls D^{\leq 0}(\Ab )$ we have a canonical isomorphism $\rH^n(\C)\simeq \pi _{-n}(\DK (\C ))$. Let $S$ be an algebraic space. The functor $\DK$ induces a functor
\begin{equation}\label{eq:DK corr}
    \st\colon\mls D^{\leq 0}(S)\to\sShv(S).
\end{equation}
For $\bC \in\mls D^{\leq 0}(S)$ we call $\st (\bC )$ the \emph{associated stack of $\bC $.}  For $n\geq 0$, the functor $\st$ restricts to a functor
\[
    \mls D^{[-n,0]}(S)\rightarrow \sShv^{\leq n}(S)
\]
taking values in $n$-stacks.
\end{pg}

The Dold--Kan correspondence, as stated here, can be extended to an equivalence of stable $\infty$-categories between $\mls D(\Ab )$ and the $\infty$-category of certain symmetric spectra, and the forgetful functor from this second category to spectra preserves limits \cite{Shipley}. It follows that the functor $\st$ preserves limits.

\begin{notation}
    For a complex $\bC\in\mls D(S)$ and an integer $m$, we write $\tau_{\leq m}\bC$ for the truncation whose $i$th term is given by $(\tau_{\leq m}\bC)^i=\bC^i$ if $i<m$, $(\tau_{\leq m}\bC)^m=\ker(d^m_{\bC}:\bC^m\to\bC^{m+1})$, and is 0 otherwise. There is a canonical inclusion $\tau_{\leq m}\bC\to\bC$ which induces an isomorphism on $\mls H^i$ for $i\leq m$.
\end{notation}

\begin{lem}\label{lem:triangle and fiber square}
If 
\[
    \bA \rightarrow \bB \rightarrow \bC \rightarrow \bA [1]
\]
is a distinguished triangle in $\mls D(S)$ then the induced diagram
\[
    \begin{tikzcd}
        \st(\tau_{\leq 0}\bA)\arrow{r}\arrow{d}&\st(\tau_{\leq 0}\bB)\arrow{d}\\
        S\arrow{r}{0}&\st(\tau_{\leq 0}\bC)
    \end{tikzcd}
\]
in $\sShv(S)$ is homotopy cartesian.
\end{lem}
\begin{proof}
    By the definition of a distinguished triangle in a stable $\infty$-category the diagram
    \[
        \begin{tikzcd}
            \bA\arrow{d}\arrow{r}&\bB\arrow{d}\\
            0\arrow{r}{0}&\bC
        \end{tikzcd}
    \]
    is homotopy cartesian. The truncation functor $\tau_{\leq 0}\colon\mls D(S)\to\mls D^{\leq 0}(S)$ is right adjoint to the inclusion $\mls D^{\leq 0}(S)\hookrightarrow\mls D(S)$, hence preserves limits, and as noted above the functor $\st$ preserves limits.
\end{proof}


\begin{pg}{\bf Algebraic complexes.}
\end{pg}
\begin{defn}\label{def:algebraic complexes}
    Let $\bC\in\mls D(S)$ be a complex of fppf sheaves on $S$. We say that $\bC$ is \emph{algebraic} if $\bC\in\mls D^{\leq 0}(S)$ and the associated stack $\st(\bC)$ is algebraic. We say that a sheaf $\mls F\in\Shv(S)$ is \emph{algebraic} if it is isomorphic to $\rH_{\bC}$ for an algebraic complex $\bC\in\mls D(S)$.
\end{defn}


\begin{rem}
    An algebraic complex is necessarily bounded below, and hence is contained in $\mls D^{[-n,0]}(S)$ for some $n\geq 0$. 
\end{rem}

\begin{example}
    If $\bG$ is a commutative group algebraic space over $S$ then the corresponding complex $\bG\in\mls D(S)$ given by $\bG$ placed in degree 0 is algebraic.
\end{example}

\begin{rem}[Relation with Picard stacks]\label{P:6.29}
In \cite[XVIII, 1.4]{SGA4} Deligne associates to a two-term complex $\bC\in \mls D^{[-1, 0]}(S)$ a stack $\text{ch}(\bC)$ over $S$.  We can understand this construction as follows.
As discussed in \cite[1.2.3]{LurieHTT} passage to the homotopy category defines an equivalence of categories between $1$-truncated $\infty$-categories and ordinary categories, with a quasi-inverse provided by the nerve functor.  From this it follows that $\sShv^{\leq 1}(S)$ is equivalent to the $2$-category of stacks in categories, and it follows from the constructions of $\st (\bC )$ and $\text{ch}(\bC )$ for a complex $\bC \in \mls D^{[-1, 0]}(S)$ that the ordinary stack associated to $\st (\bC )$ is equivalent to $\ch (\bC )$.  From this and Artin's theorem \cite[\href{https://stacks.math.columbia.edu/tag/06DC}{Tag 06DC}]{stacks-project} it follows that $\st (\bC )$ is algebraic as an $\infty $-stack if and only if $\text{ch}(\bC )$ is an algebraic stack in the classical sense.
\end{rem}





\begin{notation}
    For a property $\mls P$ as in \S\ref{ssec:properties of algebraic stacks}, we say that a morphism $f\colon\bA\to\bB$ of algebraic complexes has $\mls P$ if the corresponding morphism $\st(\bA)\to\st(\bB)$ of algebraic $\infty$-stacks has $\mls P$. We adopt the same convention for quasicompactness (in the sense of \ref{def:qc qs}).
\end{notation}

\begin{notation}\label{not:cat of alg complexes}
    We write $\mls D^{\mathrm{alg}}(S)\subset\mls D^{\leq 0}(S)$ for the full subcategory spanned by the algebraic complexes.
\end{notation}

\begin{lem}\label{lem:big list of properties}
    Let $\bA,\bB,\bC\in\mls D(S)$ be complexes and let
    \[
        \bA\to\bB\to\bC\to\bA[1]
    \]
    be a distinguished triangle in $\mls D(S)$. 
    \begin{enumerate}
        \item\label{item:fppf cover} If $U\to S$ is an fppf cover, then $\bC$ is algebraic if and only if $\bC|_U$ is algebraic.
        \item\label{item:2 out of 3 for alg} If $\tau_{\leq 0}\bB$ and $\tau_{\leq 0}\bC$ are algebraic, then $\tau_{\leq 0}\bA$ is algebraic.
    \end{enumerate}
\end{lem}
\begin{proof}
    Claim~\eqref{item:fppf cover} follows from \ref{lem:algebraic stacks and fppf cover}. For claim~\eqref{item:2 out of 3 for alg}, we use \ref{lem:triangle and fiber square} to identify $\st (\tau _{\leq 0}\bA )$ with the fiber product $\st (\tau _{\leq 0}\bB)\times _{\st (\tau _{\leq 0}\bC ), 0}S$, which is algebraic by \ref{lem:fiber product of algebraic stacks}.
\end{proof}

\begin{notation}
    For a complex $\bC\in\mls D(S)$ and an integer $n\leq 0$ we set
    \[
        \mls S_n(\bC):=\tau_{\leq 0}(\bC[n]).
    \]
\end{notation}
By construction, for a complex $\bC\in\mls D(S)$ we have that $\mls S_n(\bC)\in\mls D^{\leq 0}(S)$, and for each integer $i\leq 0$, taking $\mls H^{i}$ of the inclusion $\mls S_n(\bC)\to\bC[n]$ yields an isomorphism
    \[
        \mls H^{i}(\mls S_n(\bC))\simeq\mls H^{n+i}(\bC).
    \]

\begin{lem}\label{lem:diagonal of alg complex}
    If $\bC\in\mls D^{\leq 0}(S)$ is an algebraic complex then for any $n\leq 0$ the complex $\mls S_n(\bC)$ is algebraic. Furthermore, we have a natural isomorphism
    \[
        \st\left(\mls S_{-1}(\bC)\right)\simeq S\times _{0, \st (\bC ), 0}S=\I_{0,0}.
    \]
\end{lem}
\begin{proof}
    To show that $\mls S_n(\bC)$ is algebraic it will suffice by descending induction on $n$ to consider the case when $n=-1$. This case follows from applying \ref{lem:triangle and fiber square} to the distinguished triangle
    \[
        \bC [-1]\rightarrow 0\rightarrow \bC \rightarrow (\bC [-1])[1]\simeq \bC,
    \]
    which shows that the corresponding diagram
    \begin{equation}\label{eq:diagonal fiber square}
        \begin{tikzcd}
            \st(\mls S_{-1}(\bC))\arrow{r}\arrow{d}&S\arrow{d}{0}\\
            S\arrow{r}{0}&\st(\bC)
        \end{tikzcd}
    \end{equation}
    is homotopy cartesian.
\end{proof}


For shifts in the other direction we need some assumptions.

\begin{prop}\label{prop:diagonal of shift}
    If $\bC\in\mls D^{\leq 0}(S)$ is an algebraic complex then the diagonal $\Delta_{\st(\bC[1])}$ of $\st(\bC[1])$ is relatively algebraic. 
\end{prop}
\begin{proof}
    Let $T$ be an $S$-scheme and consider maps $t,t'\colon T\to\st(\bC[1])$. We will verify that the fiber product 
    \[
        \I _{t, t'}:=T\times_{t,\st(\bC[1]),t'}T
    \]
    is algebraic. For this note that since $\mls H^0(\bC [1])=0$ there exists an fppf cover $U\rightarrow T$ such that the restrictions of $t$ and $t'$ to $U$ are homotopic to $0$. By \ref{lem:algebraic stacks and fppf cover} it will suffice to check that $\I _{t, t'}$ is algebraic after base change to $U$, which reduces the proof to the case when $T=S$ and $t$ and $t'$ are the $0$-morphism. In this case, applying Lemma \ref{lem:diagonal of alg complex} to $\bC[1]$ we get isomorphisms $\st(\mls S_{-1}(\bC[1]))=\st(\bC)\simeq\I_{0,0}$, and so we have a homotopy cartesian diagram
    \begin{equation}\label{eq:diagonal diagram}
        \begin{tikzcd}[column sep=large]
            \st(\bC)\arrow{r}\arrow{d}&S\arrow{d}{(0,0)}\\
            \st(\bC[1])\arrow{r}{\Delta_{\st(\bC[1])}}&\st(\bC[1])\times_S\st(\bC[1]).
        \end{tikzcd}
    \end{equation}
    By assumption $\st(\bC)$ is algebraic, so the diagonal of $\st(\bC[1])$ is relatively algebraic.
\end{proof}

\begin{lem}\label{lem:0 morphism is an fppf cover}
    If $\bC$ is an algebraic complex with $\mls H^0(\bC)=0$, then the zero morphism $0\colon S\to\st(\bC)$ is an fppf cover.
\end{lem}
\begin{proof}
     Choose an fppf cover $u\colon U\to\st(\bC)$ by a scheme. As $\mls H^0(\bC)=0$, any two morphisms $U\to\st(\bC)$ are locally isomorphic, so there is an fppf cover $V\to U$ such that the restriction of $u$ to $V$ is homotopic to the zero morphism $0\colon V\to\st(\bC)$, which is therefore an fppf cover. This morphism factors as the composition
    \[
        V\to S\xrightarrow{0}\st(\bC).
    \]
    It follows from our construction that $V\to S$ is a surjection of fppf sheaves. By an identical argument as in the proof of \cite[06NB]{stacks-project}, it follows that the zero morphism $0\colon S\to\st(\bC)$ is an fppf cover.
\end{proof}

\begin{prop}\label{prop:algebraic complexes and shifts}
    For an algebraic complex $\bC\in\mls D^{\leq 0}(S)$ the following are equivalent.
    \begin{enumerate}
        \item  The shift $\bC[1]$ is algebraic.
        \item The zero morphism $0\colon S\to\st(\bC[1])$ is an fppf cover (that is, is represented by fppf covers of algebraic $\infty$-stacks).
        \item  The associated stack $\st(\bC)$ is flat and locally of finite presentation over $S$.
    \end{enumerate}
    Furthermore, if these conditions hold, then both $\st(\bC)$ and $\st(\bC[1])$ are flat and locally of finite presentation over $S$.
\end{prop}
\begin{proof}    
    $(1)\iff (2)\colon$ If $\bC[1]$ is algebraic then as $\mls H^0(\bC[1])=0$ Lemma \ref{lem:0 morphism is an fppf cover} implies that the zero morphism $0\colon S\to\st(\bC[1])$ is an fppf cover. Conversely, by \ref{prop:diagonal of shift} the diagonal of $\st(\bC[1])$ is always algebraic, so if the 0-morphism if an fppf cover then $\bC[1]$ is algebraic.

    $(2)\iff (3)\colon$ If the $0$-morphism $0\colon S\to\st(\bC[1])$ is an fppf cover then from the homotopy cartesian diagram
    \begin{equation}\label{eq:homotopy cart diagram for C}
        \begin{tikzcd}
            \st(\bC)\arrow{d}\arrow{r}&S\arrow{d}{0}\\
            S\arrow{r}{0}&\st(\bC[1])
        \end{tikzcd}
    \end{equation}
    we see that $\st(\bC)$ is flat and locally of finite presentation over $S$. Conversely, suppose that $\st(\bC)$ is flat and locally of finite presentation over $S$.
    To verify that the zero morphism is an fppf cover we must check that for every $S$-scheme $T$ and morphism $t\colon T\to\st(\bC[1])$ the fiber product $T\times_{t,\st(\bC[1]),0}S$ is faithfully flat and locally of finite presentation over $T$. As $\mls H^0(\bC[1])=0$ we may find an fppf cover $U\to T$ over which $t$ becomes homotopic to $0$. As being fppf is fppf local on the target, this reduces us to the case when $T=S$ and $t=0$. In this case the diagram~\eqref{eq:homotopy cart diagram for C} shows that the fiber product is simply $\st(\bC)$, which is flat and locally of finite presentation over $S$.

    To verify the final claim, we note that if the zero morphism is an fppf cover, then the diagram~\eqref{eq:homotopy cart diagram for C} shows that $\st(\bC)$ is flat and locally of finite presentation over $S$, and furthermore as the composition $S\xrightarrow{0}\st(\bC[1])\to S$ is flat and locally of finite presentation, it follows that $\st(\bC[1])$ is flat and locally of finite presentation over $S$.
\end{proof}

\begin{pg}{\bf Finitely presented algebraic complexes.}
\end{pg}
\begin{defn}\label{def:algebraic complex fp}
    We say that an algebraic complex $\bC\in\mls D^{\leq 0}(S)$ is \emph{finitely presented} over $S$ if it is locally of finite presentation over $S$ (that is, $\st(\bC)$ is locally of finite presentation over $S$ in the sense of \ref{def:properties of stacks}) and for every integer $n\leq 0$ the stack $\st(\mls S_n(\bC))$ is quasicompact over $S$ and the zero section $0\colon S\to\st(\mls S_n(\bC))$ is quasicompact (in the sense of Definition \ref{def:qc qs}).
\end{defn}

We remark that this is a special case of a more general notion for algebraic $\infty$-stacks, which we omit.

\begin{rem}
    If $\bG$ is a commutative group algebraic space over $S$, then $\bG$ is finitely presented over $S$ in the sense of algebraic spaces \cite[03XP]{stacks-project} if and only if $\bG$ viewed as an algebraic complex concentrated in degree 0 is finitely presented in the sense of \ref{def:algebraic complex fp}. Indeed, we have $\mls S_n(\bG)=0$ for $n\leq -1$, so $\bG$ is finitely presented as an algebraic complex if and only if $\bG$ is locally of finite presentation and the maps $\bG\to S$ and $0\colon S\to\bG$ are both quasicompact. It follows from the pullback diagrams
    \[
        \begin{tikzcd}
            \bG\arrow{d}\arrow{r}{\Delta_{\bG}}&\bG\times_S\bG\arrow{d}{\text{pr}_1-\text{pr}_2}\\
            S\arrow{r}{0}&\bG
        \end{tikzcd}
        \hspace{1cm}
        \begin{tikzcd}
            S\arrow{r}{0}\arrow{d}&\bG\arrow{d}{i_1}\\
            \bG\arrow{r}{\Delta_{\bG}}&\bG\times_S\bG
        \end{tikzcd}
    \]
    that the zero morphism $0\colon S\to\bG$ is quasicompact if and only if $\bG$ is quasiseparated as an algebraic space.
\end{rem}

\begin{lem}\label{lem:finite presentation lemma}
    Let $\bA,\bB,\bC\in\mls D(S)$ be complexes whose truncations $\tau_{\leq 0}\bA,\tau_{\leq 0}\bB,$ and $\tau_{\leq 0}\bC$ are algebraic.
    \begin{enumerate}
        \item\label{item:fp 1} If $U\to S$ is an fppf cover, then $\bC$ is finitely presented over $S$ if and only if $\bC|_U$ is finitely presented over $S$.
        \item\label{item:fp 1.5} If $\tau _{\leq 0}\bC$ is locally of finite presentation over $S$ (resp. of finite presentation over $S$) then for any $n\leq 0$ the complex $\mls S_{n}(\bC)$ is also locally of finite presentation over $S$ (resp. of finite presentation over $S$).
        \item\label{item:fp 2} Let
    \begin{equation}\label{eq:triangle}
        \bA\to\bB\to\bC\to\bA[1]
    \end{equation}
    be a distinguished triangle in $\mls D(S)$. If $\tau_{\leq 0}\bB$ and $\tau_{\leq 0}\bC$ are finitely presented over $S$, then so is $\tau_{\leq 0}\bA$.
    \end{enumerate}
\end{lem}
\begin{proof}
    Claim~\eqref{item:fp 1} follows from the fact that the properties of being locally of finite presentation and quasicompact are fppf local on $S$ (see \ref{lem:P lemma} and \ref{lem:qc lemma}).
    
    For claim~\eqref{item:fp 1.5}, suppose that $\st(\bC)$ is locally of finite presentation over $S$, and consider the homotopy cartesian diagram
    \[
        \begin{tikzcd}
            \st(\mls S_{-1}(\bC))\arrow{r}\arrow{d}&S\arrow{d}{0}\\
            S\arrow{r}{0}&\st(\bC).
        \end{tikzcd}
    \]
    If $\st(\bC)$ is locally of finite presentation over $S$, then by \ref{lem:lfp and qc lemma} the zero morphism $0\colon S\to\st(\bC)$ is also locally of finite presentation, so by \ref{lem:P lemma} the stack $\st(\mls S_{-1}(\bC))$ is locally of finite presentation over $S$. By descending induction on $n$ the same is true for $\st(\mls S_n(\bC))$ for all $n\leq 0$. Moreover, if $\st(\bC)$ is finitely presented, then it is immediate from the definitions that also each $\st(\mls S_n(\bC))$ is finitely presented.
    
    For~\eqref{item:fp 2}, fix $n\leq 0$, and apply a shift by $n$ to the triangle~\eqref{eq:triangle} and then $\st\circ\tau_{\leq 0}$ to obtain a homotopy cartesian diagram
    \[
        \begin{tikzcd}
            \st(\mls S_n(\bA))\arrow{d}\arrow{r}&\st(\mls S_n(\bB))\arrow{d}\\
            S\arrow{r}{0}&\st(\mls S_n(\bC))
        \end{tikzcd}
    \]
    of algebraic $\infty$-stacks. By assumption the zero morphism on the bottom is quasicompact, so \ref{lem:qc lemma} implies that $\st(\mls S_n(\bA))$ is quasicompact over $\st(\mls S_n(\bB))$. The latter is itself quasicompact over $S$ by assumption, so $\st(\mls S_n(\bA))$ is quasicompact over $S$. Similarly, by first rotating the triangle~\eqref{eq:triangle} and then applying the same functors we obtain a homotopy cartesian diagram
    \[
        \begin{tikzcd}
            \st(\mls S_{n-1}(\bC))\arrow{r}\arrow{d}&\st(\mls S_n(\bA))\arrow{d}\\
            S\arrow{r}{0}&\st(\mls S_n(\bB)).
        \end{tikzcd}
    \]
    By assumption the zero morphism on the bottom is quasicompact, so the top horizontal morphism is as well. The zero morphism $0\colon S\to\st(\mls S_n(\bA))$ factors as the precomposition of this map with the zero morphism $0\colon S\to\st(\mls S_{n-1}(\bC))$, which is also quasicompact by assumption. It follows that the zero morphism of $\st(\mls S_n(\bA))$ is quasicompact.
\end{proof}

\begin{lem}\label{lem:qcqs lemma}
    Let $\bC$ be an algebraic complex with $\mls H^0(\bC)=0$. Then $\st(\bC)$ is quasicompact, and the zero morphism $0\colon S\to\st(\bC)$ is quasicompact if and only if $\st(\mls S_{-1}(\bC))$ is quasicompact.
\end{lem}
\begin{proof}
    As $\mls H^0(\bC)=0$, Lemma \ref{lem:0 morphism is an fppf cover} gives that the zero morphism $0\colon S\to\st(\bC))$ is an fppf cover, and in particular surjective, so $\st(\bC)$ is quasicompact. The second claim follows from a consideration of the diagram~\eqref{eq:diagonal fiber square} and the fact that quasicompactness is fppf local on the target \ref{lem:qc lemma}.
\end{proof}

\begin{lem}\label{prop:properties of morphisms of spaces and shifts}
    If $\bC\in\mls D(\bC)$ is a complex such that both $\bC$ and $\bC[1]$ are algebraic, then $\bC$ is finitely presented if and only if $\bC[1]$ is finitely presented.
\end{lem}
\begin{proof}
    By \ref{prop:algebraic complexes and shifts} both $\st(\bC)$ and $\st(\bC[1])$ are automatically locally of finite presentation over $S$. Furthermore, for an integer $n$, we have $\mls S_n(\bC[1])\simeq\mls S_{n+1}(\bC)$. Thus, if $\st(\mls S_n(\bC[1]))$ is quasicompact and has quasicompact zero morphism for all $n\leq 0$ then the same is true for $\st(\mls S_n(\bC))$ for all $n\leq 0$. Conversely, if $\st(\mls S_n(\bC))$ is quasicompact and has quasicompact zero morphism for all $n\leq 0$ then the same is true for $\st(\mls S_n(\bC[1]))$ for $n\leq -1$. We note that we have $\mls H^0(\bC[1])=0$ and $\st(\mls S_{-1}(\bC[1]))=\st(\mls S_0(\bC))=\st(\bC)$, so \ref{lem:qcqs lemma} implies that the same is true for $\st(\mls S_0(\bC[1]))$.
\end{proof}

\begin{pg}{\bf Stably algebraic complexes.} The category $\mls D^{\alg}(S)\subset\mls D^{\leq 0}(S)$ of algebraic complexes is not closed under the shift $[-1]$, and thus is not a stable $\infty$-category. To address this we introduce the following notion of \emph{stably algebraic complexes}, which have the advantage of forming a stable $\infty$-category, and moreover include the examples in our applications in this article.
\end{pg}
\begin{defn}
    We say that a complex $\bC\in\mls D(S)$ is \emph{stably algebraic} if for all $n\gg 0$ the shift $\bC[n]$ is algebraic. A sheaf $\mls F\in\Shv(S)$ is \emph{stably algebraic} if it is isomorphic to $\rH_{\bC}$ for a stably algebraic complex $\bC\in\mls D(S)$.
\end{defn}

\begin{rem}
    A stably algebraic complex is necessarily bounded.
\end{rem}

We give examples of stably algebraic complexes below in \ref{C:6.24}.

\begin{lem}\label{lem:st alg lemma}
Let $\bA,\bB,\bC\in\mls D(S)$ be complexes. 
    \begin{enumerate}
        \item\label{item:stably algebraic and shift} $\bC$ is stably algebraic if and only if $\bC[n]$ is stably algebraic for some integer $n$, and this in turn holds if and only if $\bC[n]$ is stably algebraic for all $n$.
        \item\label{item:fppf cover st alg} If $U\to S$ is an fppf cover, then $\bC$ is stably algebraic if and only if $\bC|_U$ is stably algebraic.
        \item\label{item:2 out of 3 for stably alg} Let
    \[
        \bA\to\bB\to\bC\to\bA[1]
    \]
    be a distinguished triangle in $\mls D(S)$. If any two of $\bA,\bB,$ and $\bC$ are stably algebraic, then so is the third.
    \end{enumerate}
\end{lem}
\begin{proof}
    Claim~\eqref{item:stably algebraic and shift} is immediate from the definitions. Claim~\eqref{item:fppf cover st alg} follows by applying a shift $[n]$ for a sufficiently large $n$ and using \ref{lem:big list of properties}. Similarly, claim~\eqref{item:2 out of 3 for stably alg} follows by taking a shift by a sufficiently large integer and applying \ref{lem:big list of properties}~\eqref{item:2 out of 3 for alg} to an appropriate rotation of the triangle.
\end{proof}

\begin{notation}\label{not:cat of stably alg complexes}
    We write $\mls D^{\salg}(S)\subset\mls D(S)$ for the full subcategory spanned by the stably algebraic complexes.
\end{notation}

\begin{cor}\label{cor:stable infty category}
    The subcategory $\mls D^{\salg}(S)\subset\mls D(S)$ is a stable $\infty$-category and is closed under shifts and cones in $\mls D(S)$. Furthermore, for a complex $\bC\in\mls D(S)$ and an fppf cover $U\to S$, we have that $\bC\in\mls D^{\salg}(S)$ if and only if $\bC|_U\in\mls D^{\salg}(U)$.
\end{cor}
\begin{proof}
    This follows from \ref{lem:st alg lemma}.
\end{proof}

\begin{prop}\label{prop:stably alg criterion}
    The following are equivalent for a complex $\bC\in\mls D(S)$.
    \begin{enumerate}
        \item  $\bC$ is stably algebraic.
        \item  There exists an integer $n\geq 0$ such that the shift $\bC[n]$ is contained in $\mls D^{\leq 0}(S)$ and the associated stack $\st(\bC[n])$ is algebraic and is flat and locally of finite presentation.
        \item  If $n$ is an integer such that $\bC[n]\in\mls D^{\leq 0}(S)$, then the associated stack $\st(\bC[n])$ is algebraic and is flat and locally of finite presentation.
    \end{enumerate}
\end{prop}
\begin{proof}
    $(1)\implies(2)\colon$ If $\bC$ is a stably algebraic complex, then we may find an integer $n$ such that $\bC[n]$ and $\bC[n+1]$ are both algebraic. By \ref{prop:algebraic complexes and shifts} the associated stack $\st(\bC[n])$ is flat and locally of finite presentation over $S$.

    $(2)\implies (3)\colon$ We may assume $\bC$ is nonzero, and by replacing $\bC$ with a shift we may assume that $\bC\in\mls D^{\leq 0}(S)$ and that $\st(\bC)$ is algebraic and fppf over $S$. Suppose that $\bC$ has amplitude $[a,b]$ for some $a\leq b\leq 0$. We will verify that if $n\geq b$ then $\bC[n]$ is algebraic and $\st(\bC[n])$ is flat and locally of finite presentation over $S$. For $n\geq 0$ this follows from \ref{prop:algebraic complexes and shifts}. Suppose that $b\leq n\leq 0$. By induction it will suffice to consider the case $n=-1$. By \ref{lem:diagonal of alg complex} the complex $\mls S_{-1}(\bC)$ is algebraic. The amplitude of $\bC$ is bounded away from zero, so we have $\mls S_{-1}(\bC)\simeq\bC[-1]$. As $\bC\simeq(\bC[-1])[1]$ is algebraic, \ref{prop:algebraic complexes and shifts} gives that $\st(\bC[-1])$ is flat and locally of finite presentation over $S$.

    $(3)\implies (1)\colon$ This follows from the definition.
\end{proof}

We showed in \ref{prop:properties of morphisms of spaces and shifts} that finite presentation is a stable property of algebraic complexes. We use this to define the following notion of finite presentation for stably algebraic complexes.

\begin{defn}\label{def:stably algebraic complex fp}
    We say that a stably algebraic complex $\bC\in\mls D(S)$ is \emph{finitely presented} over $S$ if the following equivalent conditions hold.
    \begin{enumerate}
        \item [(a)] There exists $n\geq 0$ such that $\bC[n]$ is algebraic and is finitely presented over $S$.
        \item [(b)] If $n$ is an integer such that $\bC[n]\in\mls D^{\leq 0}(S)$, then $\bC[n]$ is algebraic and finitely presented over $S$.
    \end{enumerate}
    Here the equivalence follows from noting that by \ref{prop:stably alg criterion} if $n$ is an integer such that $\bC[n]$ is contained in $\mls D^{\leq 0}(S)$ then $\bC[n]$ is algebraic, combined with \ref{prop:properties of morphisms of spaces and shifts}. 
\end{defn}

We note that the definition of finite presentation for algebraic complexes in \ref{def:algebraic complex fp} and for stably algebraic complexes in \ref{def:stably algebraic complex fp} agree whenever they are both applicable.

\begin{rem}
    Unlike finite presentation, many properties of the associated algebraic stacks are not invariant under shifts $\bC\mapsto\bC[1]$. For example, if $\bG$ is a commutative group scheme which is flat and locally of finite presentation over $S$ but not smooth, then $\st(\bG[n])$ is smooth over $S$ for $n\geq 1$ but not smooth over $S$ for $n=0$.
\end{rem}

The following result will be the source of our examples of stably algebraic complexes.

\begin{prop}\label{prop:locally perfect implies stably algebraic}
   If $\bC\in\mls D(S)$ is a complex for which there exists an fppf cover $U\to S$ such that the restriction $\bC|_U$ is quasi-isomorphic to a bounded complex of group algebraic spaces that are flat and locally of finite presentation over $S$ (resp. flat of finite presentation over $S$), then $\bC$ is stably algebraic (resp. stably algebraic and of finite presentation over $S$).
\end{prop}
\begin{proof}
    We induct on the amplitude of $\bC$. By \ref{lem:st alg lemma}~\eqref{item:fppf cover st alg} and the fact that being of finite presentation is fppf local on $S$, it will suffice to show that if $\bC\in\mls D(S)$ is a bounded complex of group algebraic spaces which are flat and locally of finite presentation over $S$ (resp. of finite presentation over $S$) then $\bC^{}$ is stably algebraic (resp. stably algebraic and of finite presentation over $S$). For an integer $s$ we consider the truncation $\sigma_{\geq s}\bC^{}$, whose $i$th term is $(\sigma_{\geq s}\bC^{})^i=\bC^i$ if $i\geq s$ and is $0$ otherwise. We have distinguished triangles
    \[
        \sigma_{\geq s}\bC^{}\to\bC^s[-s]\to\sigma_{\geq s+1}\bC^{}[1]\to\sigma_{\geq s}\bC^{}[1].
    \]
    By induction on the amplitude of $\bC^{}$, Lemma \ref{lem:st alg lemma}~\eqref{item:2 out of 3 for stably alg}, and Lemma \ref{lem:finite presentation lemma}~\eqref{item:fp 2}, we are reduced to the case when $\bC^{}\simeq\bG[m]$ where $m$ is an integer and $\bG$ is a group algebraic space which is flat and locally of finite presentation over $S$ (resp. of finite presentation over $S$), viewed as a complex concentrated in degree 0. If $\bG$ is flat and locally of finite presentation over $S$, then it follows from \ref{prop:stably alg criterion} that $\bG[m]$ is stably algebraic. It remains to show that if $\bG$ is flat and finitely presented over $S$ then $\bG[m]$ is also finitely presented over $S$, in the sense of \ref{def:stably algebraic complex fp}. For this we may assume that $m=0$. Now as $\mls S_n(\bG)=0$ for $n\leq -1$, to complete the proof we need only observe that the morphisms $\bG\to S$ and $0\colon S\to\bG$ are both quasicompact, which follow from our assumption that $\bG$ is finitely presented over $S$, and in particular is quasicompact and quasiseparated over $S$.
\end{proof}

\begin{example}\label{C:6.24}
    As immediate consequences of \ref{prop:locally perfect implies stably algebraic} we have the following examples of stably algebraic complexes.
    \begin{enumerate}
        \item [(i)] If $\bG$ is a commutative group algebraic space over $S$ which is flat and locally of finite presentation, then for any $n\in\bZ$ the shift $\bG[n]\in\mls D(S)$ is stably algebraic. Furthermore, if $\bG$ is flat and finitely presented, then $\bG[n]$ is also finitely presented for any $n\in\bZ$ (in the sense of \ref{def:stably algebraic complex fp}).
        \item [(ii)] If $\bV^{}\in\mls D(S)$ is a perfect complex of vector bundles then $\bV^{}$ is stably algebraic and finitely presented over $S$.
    \end{enumerate}
\end{example}

\begin{pg}\label{ssec:algebraicity of cohomology}{\bf Algebraicity of cohomology sheaves.} 
\end{pg}
\begin{prop}\label{T:A.1} 
Let $\bC\in \mls D^{\leq 0}(S)$ be an algebraic complex. If $\mls S_{-1}(\bC)$ is flat and locally of finite presentation over $S$, then
\begin{enumerate}
    \item\label{item:TA11} $\mls H^0(\bC)$ is an algebraic space,
    \item\label{item:TA12} the natural map $\bC\rightarrow \mls H^0(\bC)$ is a quasicompact fppf cover, and
    \item\label{item:TA13} $\bC$ is flat (resp. locally of finite presentation) over $S$ if and only if $\mls H^0(\bC)$ is flat (resp. locally of finite presentation) over $S$.
\end{enumerate}
Furthermore, if $\mls S_{-1}(\bC)$ is flat and finitely presented, then $\bC$ is finitely presented if and only if $\mls H^0(\bC)$ is finitely presented.
\end{prop}
\begin{proof}
We begin by verifying that the diagonal of $\mls H^0(\bC)$ is representable.  Let $T$ be an $S$-scheme and let $x, y\in\mls H^0(\bC)(T)$ be two sections. The functor expressing the condition $x=y$ is isomorphic to the functor expressing the condition $x-y=0$. Therefore, to verify the representability of the diagonal it suffices to show that if $T$ is an $S$-scheme and $x\in \mls H^0(\bC)(T)$ is a section then the functor $F$ given by
\[
    F(T'\rightarrow T) = \begin{cases} \emptyset & \text{if $x|_{T'}\neq 0$},\\
    \{*\} & \text{if $x|_{T'} = 0$} \end{cases}
\]
is an algebraic space.  For this note that $F$ is a subfunctor of $T$ so the diagonal of $F$ over $T$ is an isomorphism.  To find an fppf cover of $F$ we may replace $T$ by an fppf cover and therefore may assume that $x$ lifts to a section $\widetilde x\in \rH ^0(T, \bC )$.  Let $\widetilde{F}$ denote the fiber product 
\[
    \widetilde{F}:= \st (\bC )\times _{\Delta , \st (\bC )\times \st (\bC ), 0\times\widetilde x}T,
\]
which is an algebraic stack since $\bC $ is algebraic by assumption. There is a natural map
$\widetilde{F}\rightarrow F$, which we claim is an fppf cover.
 To prove this consider the commutative diagram
 \begin{equation}\label{eq:eqst}
    \begin{tikzcd}
        \bC \oplus \tau _{\leq -1}\bC \arrow{r}{\varepsilon}\arrow{d}[swap]{\mathrm{pr}_1}& \bC \oplus \bC \arrow{d}{\id\oplus\pi}\arrow{r}{\mathrm{pr}_2}&\bC\arrow{d}{\pi}\\
        \bC \arrow{r}{\delta}& \bC \oplus \mls H^0(\bC )\arrow{r}{\mathrm{pr}_2}&\mls H^0(\bC)
    \end{tikzcd}
 \end{equation}
 where $\mathrm{pr}_1$ and $\mathrm{pr}_2$ are projections, $\varepsilon$ is the map $(x, y)\mapsto (x, x+y)$, $\pi\colon \bC\to\mls H^0(\bC)$ is the natural quotient map, and $\delta$ is the map $x\mapsto (x,\pi(x))$. One checks that both squares are homotopy cartesian, and hence the squares in the corresponding diagram of stacks are homotopy cartesian. Using the same symbols for the induced morphisms of stacks we obtain a commutative diagram
\[
    \begin{tikzcd}
        \widetilde{F}\arrow{d}\arrow{r}& F\arrow{d}\arrow{r}& T\arrow{d}{0\times \widetilde x}\\
        \st (\bC )\arrow{r}{\mathrm{id}\times 0}\arrow[equals]{dr}& \st (\bC )\times_S \st (\tau _{\leq -1}\bC )\arrow{d}{\mathrm{pr}_1}\arrow{r}{\varepsilon}& \st (\bC )\times_S \st (\bC )\arrow{d}{\id\times\pi}\\
        & \st (\bC )\arrow{r}{\delta}&  \st (\bC )\times_S \mls H^0(\bC ).
    \end{tikzcd}
\]
The bottom right square is induced by the leftmost square of~\eqref{eq:eqst}, and so is cartesian. It follows that the top two squares are as well. By assumption the stack $\st(\mls S_{-1}(\bC))$ is flat and locally of finite presentation over $S$, so by \ref{prop:algebraic complexes and shifts} the zero morphism $0\colon S\rightarrow\st(\mls S_{-1}(\bC)[1])=\st (\tau _{\leq -1}\bC )$ is an fppf cover. We conclude that $\widetilde{F}\to F$ is an fppf cover, as claimed. As $\widetilde{F}$ is algebraic, it admits an fppf cover by a scheme, and hence so does $F$. We conclude that $F$ is an algebraic space, and therefore that the diagonal of $\mls H^0(\bC )$ is representable.

We now show that the map $\pi\colon\bC\to\mls H^0(\bC)$ is a quasicompact fppf cover. This will prove~\eqref{item:TA12}, and also complete the proof of~\eqref{item:TA11}, as then any cover of $\bC$ by a scheme will yield a cover of $\mls H^0(\bC)$ by a scheme. We must show that for any $S$-scheme $T$ and element $x\in\mls H^0(\bC)(T)$ the map
\[
    \widetilde T:= \st (\bC )\times _{\mls H^0(\bC ), x}T\rightarrow T
\]
is an fppf cover. This can be verified fppf-locally on $T$, so we may assume as before that $x$ lifts to a section $\widetilde x\in \rH ^0(T, \bC )$. Consider the commutative diagram
\begin{equation}\label{eq:diagram for S-1}
    \begin{tikzcd}
        \widetilde{T}\arrow{d}\arrow{r}&T\arrow{d}{\widetilde{x}}\\
        \st(\bC)\times_S\st(\tau_{\leq -1}\bC)\arrow{r}{+}\arrow{d}[swap]{\mathrm{pr}_1}&\st(\bC)\arrow{d}{\pi}\\
        \st(\bC)\arrow{r}{\pi}&\mls H^0(\bC).
    \end{tikzcd}
\end{equation}
The lower square is the diagram of stacks induced by the outer rectangle of~\eqref{eq:eqst}, and so is cartesian. Thus the upper square is cartesian as well, and so $\widetilde{T}\to T$ is isomorphic to $T\times_S\st(\tau_{\leq -1}\bC)\simeq\st(\tau_{\leq -1}\bC|_T)\to T$. By assumption $\st(\mls S_{-1}(\bC))$ is flat and locally of finite presentation over $S$, so by \ref{prop:algebraic complexes and shifts} the stack $\st(\mls S_{-1}(\bC)[1])\simeq\st(\tau_{\leq -1}\bC)$ is algebraic and fppf over $S$. We conclude that $\widetilde{T}\to T$ is an fppf cover, as claimed, and therefore $\pi\colon\bC\to\mls H^0(\bC)$ is an fppf cover. To show that it is also quasicompact, we note that by \ref{lem:qcqs lemma} the stack $\st(\tau_{\leq -1}\bC)$ is quasicompact over $S$. We have shown that $\pi$ is an fppf cover, so from the lower square of the diagram~\eqref{eq:diagram for S-1} and the fact that quasicompactness is fppf local on the target we deduce that $\pi$ is quasicompact. 

For~\eqref{item:TA13}, we note that the properties of flatness and local finite presentation are fppf local on the source, so~\eqref{item:TA12} implies that $\mls H^0(\bC)$ is flat or locally of finite presentation over $S$ if and only if the same is true for $\bC$.

It remains to verify the final claim following the ``Furthermore''. Suppose that $\mls S_{-1}(\bC)$ is flat and finitely presented. Consider the triangle
    \[
        \bC\to\mls H^0(\bC)\to\mls S_{-1}(\bC)\to\bC[1].
    \]
From \ref{lem:finite presentation lemma}~\eqref{item:fp 2} we get that $\mls H^0(\bC)$ finitely presented implies $\bC$ finitely presented. Conversely, if $\bC$ is finitely presented then a quasicompact cover of $\bC$ by a scheme gives rise to a quasicompact cover of $\mls H^0(\bC)$, which therefore is quasicompact. Furthermore, the zero morphism of $\mls H^0(\bC)$ factors as the composition $S\xrightarrow{0}\st(\bC)\xrightarrow{\pi}\mls H^0(\bC)$. By assumption the first map is quasicompact, and by~\eqref{item:TA12} the second map is also quasicompact. We conclude that the zero morphism of $\mls H^0(\bC)$ is quasicompact, so $\mls H^0(\bC)$ is finitely presented.
\end{proof}

\begin{prop}\label{prop:all coho flat, with fp}
    Let $\bC\in\mls D^{\leq 0}(S)$ be an algebraic complex which is locally of finite presentation over $S$ (resp. of finite presentation over $S$). If for all $i\leq 0$ the cohomology sheaf $\mls H^i(\bC)$ is an algebraic space which is flat over $S$, then $\bC$ is flat, and $\mls H^i(\bC)$ is locally of finite presentation (resp. of finite presentation) for each $i\leq 0$.
\end{prop}
\begin{proof}
    We first consider the case where we only assume $\bC$ to be locally of finite presentation. For this we induct on the amplitude of $\bC$. The base case of amplitude one is immediate. In the general case, we note that the complex $\mls S_{-1}(\bC)$ is locally of finite presentation by \ref{lem:finite presentation lemma}~\eqref{item:fp 1.5}, and is therefore also flat by our induction hypothesis. Thus the assumptions of \ref{T:A.1} are satisfied. Because $\mls H^0(\bC)$ is flat, \ref{T:A.1}~\eqref{item:TA13} then implies that $\bC$ is flat. Furthermore, because $\bC$ is locally of finite presentation, \ref{T:A.1}~\eqref{item:TA13} implies that $\mls H^0(\bC)$ is locally of finite presentation. The result follows by induction.

    In the case when $\bC$ is assumed to be of finite presentation, the same induction combined with the final assertion of \ref{T:A.1} gives that $\mls H^i(\bC)$ is also of finite presentation for $i\leq 0$.
\end{proof}

\begin{prop}\label{prop:all coho flat, with fp 2}
    Let $\bC\in\mls D^{\leq 0}(S)$ be an algebraic complex which is locally of finite presentation over $S$ (resp. of finite presentation over $S$). If for all $i<0$ the cohomology sheaf $\mls H^i(\bC)$ is an algebraic space which is flat over $S$, then $\mls H^0(\bC)$ is an algebraic space that is locally of finite presentation over $S$ (resp. of finite presentation over $S$).
\end{prop}
\begin{proof}
    By \ref{lem:finite presentation lemma}~\eqref{item:fp 1.5} the complex $\mls S_{-1}(\bC)$ is also locally of finite presentation (resp. of finite presentation). Now apply \ref{prop:all coho flat, with fp} to $\mls S_{-1}(\bC)$ and then apply \ref{T:A.1}. We note that \ref{prop:all coho flat, with fp} implies that in fact $\mls H^i(\bC)$ is locally of finite presentation over $S$ (resp. of finite presentation over $S$) for all $i<0$.
\end{proof}

We next give a stable version of the preceding proposition.


\begin{cor}\label{cor:coho sheaves of a stably alg complex}
    Let $\bC\in\mls D(S)$ be a stably algebraic complex (resp. a finitely presented stably algebraic complex). If $n$ is an integer such that for all $i<n$ the cohomology sheaf $\mls H^i(\bC)$ is an algebraic space that is flat over $S$, then $\mls H^n(\bC)$ is an algebraic space that is locally of finite presentation (resp. of finite presentation).
\end{cor}
\begin{proof}
    By applying a shift by a sufficiently large integer we may assume that $\bC\in\mls D^{\leq 0}(S)$ is an algebraic complex and $n\leq 0$. By \ref{prop:stably alg criterion} $\bC$ is locally of finite presentation over $S$ (resp. of finite presentation over $S$). By \ref{lem:finite presentation lemma}~\eqref{item:fp 1.5} the same is true for the complex $\mls S_n(\bC)$, and the result follows by applying \ref{prop:all coho flat, with fp 2} to $\mls S_n(\bC)$. We note that this implies that $\mls H^i(\bC)$ is also locally of finite presentation (resp. of finite presentation) for all $i<n$.
\end{proof}

\begin{cor}\label{cor:generic representability of stably alg complex}
    Let $S$ be a reduced noetherian scheme and let $\bC\in\mls D(S)$ be a finitely presented stably algebraic complex over $S$. For each $n$ there exists a dense open subset $U\subset S$ such that the fppf sheaf of abelian groups $\mls H^n(\bC)|_U$ is representable by a group scheme that is flat and of finite type over $U$.
\end{cor}
\begin{proof}
    As $\bC$ is bounded we have $\mls H^i(\bC)=0$ for $i\ll 0$. Applying \ref{cor:coho sheaves of a stably alg complex} inductively and using generic flatness, we get that there exists a dense open $U\subset S$ such that $\mls H^n(\bC)|_U$ is a group algebraic space that is flat and of finite presentation over $U$. We claim that after further shrinking $U$ we may arrange for $\mls H^n(\bC)|_U$ to be representable by a scheme. Indeed, as $\mls H^n(\bC)|_U$ is quasiseparated, it contains a dense open subscheme \cite[06NH]{stacks-project}, which after shrinking $U$ we may assume to surject onto $U$. By the homogeneity resulting from the group structure, $\mls H^n(\bC)|_U$ is a scheme.
\end{proof}

\begin{cor}\label{cor:cohomology of a stably algebraic complex is constructible}
    Let $S$ be a reduced noetherian scheme and let $\bC\in\mls D(S)$ be a finitely presented stably algebraic complex. For each $n$ the cohomology sheaf $\mls H^n(\bC)$ has the property that there exists a finite partition $S=\bigsqcup_iZ_i$ of $S$ into reduced locally closed subschemes $Z_i\subset S$ such that for each $i$ the restriction of $\mls H^n(\bC)$ to $Z_i$ is representable by a group scheme that is flat and of finite type over $Z_i$.
\end{cor}
\begin{proof}
    Apply \ref{cor:generic representability of stably alg complex} and noetherian induction.
\end{proof}

\section{Global representability results for flat cohomology}\label{sec:algebraicity of cohomology}

     In this section we prove Theorem \ref{T:strongform}. Let $f\colon X\to S$ be a flat proper finitely presented morphism of algebraic spaces of characteristic $p$ and let $\bG$ be a commutative group scheme over $X$.
     Our aim is to show, under various combinations of assumptions on $f$ and $\bG$, that the flat cohomology $\R f_*\bG\in\mls D(S)$ is a stably algebraic complex and is finitely presented over $S$. We recall the key points of this notion from the previous section: First, the full subcategory $\mls D^{\salg}(S)\subset\mls D(S)$ spanned by the stably algebraic complexes is a stable $\infty$-category and is closed under shifts and cones in $\mls D(S)$, second, membership in $\mls D^{\salg}(S)$ may be checked fppf locally on $S$ (see \ref{cor:stable infty category}), and finally by \ref{prop:locally perfect implies stably algebraic} perfect complexes of vector bundles on $S$ and shifts of flat group algebraic spaces of finite presentation over $S$ are examples of finitely presented stably algebraic complexes.
     
\begin{lem}\label{lem:pushforward of perfect complex}
    If $f\colon X\to S$ is a flat proper finitely presented morphism of algebraic spaces and $\bV$ is a perfect complex of vector bundles on $X$, then the flat pushforward $\R f_*\bV$ is a perfect complex of vector bundles on $S$, and in particular is stably algebraic and finitely presented.
\end{lem}
\begin{proof}
    We have that $\bV=\R\bV(\mls V)$ for a perfect complex of locally free sheaves $\mls V$ on $X$. As $f$ is flat and proper, the derived pushforward $\R f_*\ms V\in \mls D(S_{\et},\mls O_{\et})$ is a perfect complex of locally free sheaves on $S$ and is of formation compatible with arbitrary base change \cite[0CTM]{stacks-project}. Thus $\R\bV(\R f_*\mls V)$ is a perfect complex of vector bundles on $S$, and we have a canonical isomorphism
        \[
            \R\bV(\R f_*\mls V)\simeq\R f_*\bV.
        \]
\end{proof}

    \begin{proof}[Proof of Theorem \ref{T:strongform}, cases~\eqref{item:coho etale ell} and~\eqref{item:coho AM}]
        The result in case~\eqref{item:coho AM} follows from \ref{lem:pushforward of perfect complex} and the Artin--Mazur resolution. In case~\eqref{item:coho etale ell}, we may assume that $\bG$ has constant order. We may then write $\bG$ canonically as a direct sum of a finite \'{e}tale group scheme of constant order a power of $p$ and a finite \'{e}tale group scheme of constant order coprime to $p$. In the former case, by filtering by powers of $p$ we further reduce to the case of order $p$, where the result follows from case~\eqref{item:coho AM}. To treat the latter case, suppose that $\bG$ has constant order coprime to $p$ and let $\mls G:=\epsilon_*\bG$ be the restriction of $\bG$ to the small \'{e}tale site of $X$. It follows from the smooth and proper base change theorems in \'{e}tale cohomology that the usual \'{e}tale cohomology groups $\R^i f_*\mls G\in\mls D(S_{\et})$ are compatible with arbitrary base change and are locally constant for each $i$. This implies that the fppf cohomology groups $\R^if_*\bG$ are representable by finite \'{e}tale group schemes. Arguing as in \ref{prop:locally perfect implies stably algebraic} we then get that $\R f_*\bG$ is contained in the stable subcategory generated by finite \'{e}tale group schemes, which implies the result.
    \end{proof}

    We now consider case~\eqref{item:coho height 1} of Theorem \ref{T:strongform}, which notably covers $\bmu_p$. Unlike the previous two cases, this case is essentially new, and uses crucially our results from \S\ref{sec:flat coho via Kan}. We deduce this as a consequence of a more general result. Let $f\colon X\to S$ be a morphism of algebraic spaces of characteristic $p$ and let $\bG$ be a commutative finite locally free height $1$ group scheme over $X$. By Theorem \ref{thm:main kan theorem, over a stack}, we have isomorphisms
        \[
            \R f_*(\mls C_{\bG}[-1])\simeq\R f_*\rH_{\bG}\simeq\rH_{\R f_*\bG}.
        \]
    Thus Theorem \ref{T:strongform} (3) will follow from the following result.

\begin{thm}\label{T:A.2}
    Let $f\colon X\to S$ be a flat proper lci morphism of finite presentation of algebraic spaces of characteristic $p$. For any pair $(\mls V^{},\rho)$ where $\mls V^{}$ is a perfect complex on $X$ and $\rho \colon \rL\F_X^*\mls V^{}\to\mls V^{}$ is a morphism in $\mls D(X_{\et},\mls O_{\et})$, the sheaf $\R f_*\mls C_{(\mls V^{}, \rho )}\in\Shv(S)$ is stably algebraic and finitely presented over $S$.
    Furthermore, $\R f_*\mls C_{(\mls V^{},\rho)}$ is contained in the full stable $\infty$-subcategory of $\Shv(S)$ generated by the sheaves of the form $\rH_{\mls E}$ for perfect complexes $\mls E$ on $S$ and of the form $\mls C_{(\mls W^{},\sigma)}$ for pairs $(\mls W^{},\sigma)$ on $S$ as in \ref{not:notation for C_G, the functor}.
\end{thm}


\begin{proof}
    We will show that $\R f_*\mls C_{(\mls V^{}, \rho )}$ is stably algebraic and of finite presentation. The remaining claim will follow from the proof. We proceed by first considering some special cases of the result.

\begin{pg}[The case when $X=S$]\label{ssec:X=S}
    Consider first the case when $X=S$ and $f$ is the identity. By \ref{lem:big list of properties}~\eqref{item:fppf cover}, the result is local on $S$, so after taking a Zariski open cover of $S$ we may assume that $\mls V^{}$ is strictly perfect, so that $\mls V^i$ is a locally free sheaf for each $i$ and is nonzero for all but finitely many $i$. By \cite[08C9]{stacks-project} after taking a further Zariski open cover we may assume that $\rho$ is a strict map of complexes, hence given in degree $i$ by a map $\rho^i\colon\F_X^*\mls V^i\to\mls V^i$ of coherent sheaves. For an integer $s$ we consider the truncation $\sigma_{\geq s}\mls V^{}$, which is equipped with the map $\sigma_{\geq s}\rho \colon \sigma_{\geq s}\mls V^{}\to\sigma_{\geq s}\mls V^{}$. We have distinguished triangles
    \[
        \sigma _{\geq s+1}\mls V^{}\rightarrow \sigma _{\geq s}\mls V^{}\rightarrow\mls V^{s}[-s]\rightarrow \sigma _{\geq s+1}\mls V^{}[1]
    \]
    which are compatible with the maps $\sigma_{\geq s}\rho$.  These induce distinguished triangles
    \[
        \mls C_{(\sigma _{\geq s+1}\mls V^{},\sigma _{\geq s+1}\rho)}\rightarrow \mls C_{(\sigma _{\geq s}\mls V^{},\sigma _{\geq s}\rho)}\rightarrow \bC _{(\mls V^{s}, \rho^{s})}[-s]\rightarrow \mls C_{(\sigma _{\geq s+1}\mls V^{},\sigma _{\geq s+1}\rho)}[1].
    \]
    By induction on the amplitude of $\mls V^{}$ we are then reduced to the case when $\mls V$ is a single locally free sheaf viewed as a complex concentrated in degree 0. If $\bG$ is the height $\leq 1$ group scheme associated to the pair $(\mls V,\rho)$, then by \ref{thm:main kan theorem, over a stack} we have an isomorphism $\mls C_{(\mls V,\rho)}\simeq\rH_{\bG}[1]\simeq\rH_{\bG[1]}$. Thus $\mls C_{(\mls V,\rho)}$ is stably algebraic by \ref{prop:locally perfect implies stably algebraic}.
\end{pg}

\begin{pg}[The case when $\rho=0$]
\label{ssec:rho=0}
We now consider the case when $\rho=0$. Set $\bV^{}=\R\bV(\mls V^{})$. As in \ref{pg:perfect complex of vector bundles}, we set
\[
    \bG_{\mls V^{}}=\cocone\left(\bV^{}\xrightarrow{\F_{\bV^{}/X}}\bV^{(p)}\right)\in\mls D(X).
\]
By \ref{thm:kan extension theorem for a complex} we have a canonical isomorphism $\mls C_{(\mls V^{},0)}\iso\rH_{\bG_{\mls V^{}}}[1]$. We obtain isomorphisms
\[
    \R f_*\mls C_{(\mls V^{},0)}\simeq\R f_*\rH_{\bG_{\mls V^{}}}[1]\simeq\rH_{\R f_*\bG_{\mls V^{}}[1]}.
\]
By \ref{lem:pushforward of perfect complex} and \ref{prop:locally perfect implies stably algebraic} the sheaf $\R f_*\bG_{\mls V^{}}$ is stably algebraic, so $\R f_*\mls C_{(\mls V^{},0)}$ is stably algebraic as well.
\end{pg}

\begin{pg}[The general case]


We now consider the general case. As $f$ is flat and proper, the derived pushforward $\R f_*\mls V^{}$ is a perfect complex on $S$ \cite[0CTM]{stacks-project}. Let $\rho'\colon\mls V\to\F_{X*}\mls V$ be the adjoint to $\rho$, and let $\sigma\colon\rL\F_S^*(\R f_*\mls V)\to\R f_*\mls V$ denote the adjoint to the map
\[
    \R f_*\mls V\xrightarrow{\R f_*\rho'}\R f_*(\F_{X*}\mls V)\simeq\F_{S*}(\R f_*\mls V).
\]
We consider the functor $\mls C_{(\R f_*\mls V^{},\sigma)}\colon\Alg_S\to\mls D(\Ab)$ associated to the pair $(\R f_*\mls V^{},\sigma)$. Using the description~\eqref{eq:description of C as a functor} of the functors $\mls C_{(\mls V^{},\rho)}$ and $\mls C_{(\R f_*\mls V^{},\sigma)}$, the adjunction map $f^*\R f_*\mls V\rightarrow\mls V$ gives rise to a morphism
\begin{equation}\label{eq:map of dts, functor level}
    \begin{tikzcd}
        \mls C_{(\R f_*\mls V^{},\sigma)}\arrow{r}\arrow{d}{\alpha}&\left(\R f_{*}\rH_{\mls V^{}}\right)\otimes\F_{*}\uLZOmega^1_{S}\arrow{r}\arrow{d}{\beta}&\left(\R f_{*}\rH_{\mls V^{}}\right)\otimes\uLOmega^1_S\arrow{r}{+1}\arrow{d}{\gamma}&\,\\
        \R f_*\mls C_{(\mls V^{},\rho)}\arrow{r}&\R f_{*}\left(\rH_{\mls V^{}}\otimes\F_{*}\uLZOmega^1_{X}\right)\arrow{r}&\R f_{*}\left(\rH_{\mls V^{}}\otimes\uLOmega^1_{X}\right)\arrow{r}{+1}&\,
    \end{tikzcd}
\end{equation}
of distinguished triangles in $\Shv(S)$. By the case $X = S$ settled above, we have that $\mls C_{(\R f_*\mls V^{},\sigma)}$ is stably algebraic. As $\mls D^{\salg}(S)$ is a stable $\infty$-category and is a full subcategory of $\mls D(S)$, the complex $\R f_*\mls C_{(\mls V^{}, \rho )}$ is stably algebraic if and only if the same is true for $\cone(\alpha)$. Taking cones yields a distinguished triangle
\begin{equation}\label{eq:triangle of cones}
    \cone(\alpha)\to\cone(\beta)\to\cone(\gamma)\xrightarrow{+1}.
\end{equation}
We claim that $\cone(\beta)$ and $\cone(\gamma)$ are stably algebraic. This will suffice to prove the result, as then~\eqref{eq:triangle of cones} will be isomorphic to the image of a distinguished triangle in $\mls D^{\salg}(S)$.

Consider an affine $S$-scheme $g\colon\Spec A\to S$. Write $X_A=X\times_S\Spec A$ and let $f_A$ (resp. $g_A$) be the base change of $f$ (resp. $g$), so that we have a cartesian diagram
\[
    \begin{tikzcd}
        X_A\arrow{d}[swap]{f_A}\arrow{r}{g_A}&X\arrow{d}{f}\\
        \Spec A\arrow{r}{g}&S.
    \end{tikzcd}
\]
Write $\mls V^{}_A=\rL g_A^*\mls V^{}$.
Evaluating the diagram~\eqref{eq:map of dts, functor level} on $A$, we obtain a diagram
\[
    \begin{tikzcd}
        \mls C_{(\R f_*\mls V^{},\R f_*\rho)}(A)\arrow{r}\arrow{d}{\alpha(A)}&\left(\R f_{A*}\mls V^{}_A\right)\otimes\F_{A*}\rL\Z\Omega^1_{A}\arrow{r}\arrow{d}{\beta(A)}&\left(\R f_{A*}\mls V^{}_A\right)\otimes\rL\Omega^1_A\arrow{r}{+1}\arrow{d}{\gamma(A)}&\,\\
        \left(\R f_*\mls C_{(\mls V^{},\rho)}\right)(A)\arrow{r}&\R f_{A*}\left(\mls V^{}_{A}\otimes\F_{X_A*}\rL\Z\Omega^1_{X_A}\right)\arrow{r}&\R f_{A*}\left(\mls V^{}_{A}\otimes\rL\Omega^1_{X_A}\right)\arrow{r}{+1}&\,
    \end{tikzcd}
\]
describing a morphism of distinguished triangles in $\mls D(\Ab)$.
The projection formula gives an isomorphism
\[
    (\R f_{A*}\mls V^{}_A)\otimes\rL\Omega^1_A\simeq \R f_{A*}(\mls V^{}_A\otimes f_{A}^*\rL\Omega^1_A)
\]
(recall that we assume $f$ to be flat). Combining this with the distinguished triangle
\[      
    f_{A}^*\rL\Omega^1_A\rightarrow\rL\Omega^1_{X_A}\rightarrow\rL\Omega^1_{X_A/A}\xrightarrow{+1}
\]
we get a canonical isomorphism 
\[
    \cone(\gamma)(A)\simeq\R f_{A*}(\mls V^{}_A\otimes\rL\Omega^1_{X_A/A})\simeq\rL g^{*}\left(\R f_*\left(\mls V^{}\otimes\rL\Omega^1_{X/S}\right)\right).
\]
Thus, we have $\cone(\gamma)\simeq\rH_{\bW^{}}$, where
\[
    \bW^{}=\R\bV\left(\R f_*(\mls V^{}\otimes\rL\Omega^1_{X/S})\right)\in\mls D(S).
\]
As $f$ is lci, the cotangent complex $\rL\Omega^1_{X/S}$ is perfect, so by \cite[0CTM]{stacks-project} the pushforward $\R f_*(\mls V^{}\otimes\rL\Omega^1_{X/S})$ is a perfect complex on $S$. Thus, $\bW^{}$ is a perfect complex of vector bundles on $S$, so $\bW^{}$ and therefore $\cone(\gamma)$ is stably algebraic. From the triangle~\eqref{eq:triangle of cones} we get that $\cone(\alpha)$ is stably algebraic if and only if $\cone(\beta)$ is stably algebraic. We now observe that the map $\rho$ enters into the diagram~\eqref{eq:map of dts, functor level} only in the horizontal morphisms. In particular, the middle column does not depend on $\rho$, and hence $\cone(\beta)$ is unchanged upon replacing $(\mls V^{}, \rho)$ with $(\mls V^{}, 0)$. This reduces us to the case when $\rho=0$, which was handled above in \S\ref{ssec:rho=0}. This completes the proof of Theorem \ref{T:A.2} and therefore also Theorem \ref{T:strongform}.
\end{pg}
\end{proof}

\section{Animated rings and their cohomology}\label{S:section4}

In this section we recall some derived algebraic geometry, following \cite[\S 5]{CS} (see also \cite[Appendix A]{BL2}).  We also extend the results of \ref{thm:flat cohomology is Kan extended from polynomial algebras} and \ref{thm:derived hoobler} to animated rings.

\begin{pg}{\bf Animated rings and sheaves.}
Let $R$ be a ring. We write $\Alg_R$ for the category of $R$-algebras and $\Poly_R$ for the category of finitely generated polynomial algebras over $R$.
An \emph{animated $R$-algebra} is a functor
\[
    \Poly_R^{\op}\to\Spc
\]
which preserves finite products. The \emph{$\infty$-category of animated $R$-algebras} is the full subcategory $\Alg^{\ani}_R$ of the category of functors $\Poly_R\to\Spc$ spanned by the animated $R$-algebras. As outlined in \cite[Remark A.4]{BL2}, the category $\Alg^{\ani}_R$ may be alternatively described as the $\infty$-category obtained from the category of simplicial objects in $\Alg_R$ by inverting weak equivalences. We will use both perspectives freely.
\end{pg}

\begin{example}\label{ex:ordinary and animated algebras}
    An ordinary $R$-algebra $A$ gives rise to a product preserving functor
    \[
        \Poly_R^{\op}\to\mathrm{Set},\hspace{1cm}P\mapsto \Hom_{\Alg_R}(P,A).
    \]
    Viewing a set as a discrete space, this gives a fully faithful embedding $\Alg_R\hookrightarrow\Alg^{\ani}_R$ whose essential image is the subcategory of $0$-truncated animated $R$-algebras.
\end{example}

\begin{rem}\label{rem:animated abelian groups}
    This definition of $\Alg^{\ani}_R$ is a special case of a more general construction of the \emph{animation} of a 1-category with suitable properties. In particular, replacing $\Poly_R$ in the above with the category $\F\Ab$ of free abelian groups of finite rank yields the $\infty$-category $\Ab^{\ani}$ of \emph{animated abelian groups}. By \cite[Example 5.1.6(2)]{CS}, the category $\Ab^{\ani}$ is identified under the Dold-Kan correspondence with the $\infty$-category $\mls D^{\leq 0}(\Ab)$.
\end{rem}

\begin{pg}\label{pg:push pull functors, affine case}
For a morphism of rings $i\colon R\rightarrow R'$ we have functors
\[
    i^*\colon \Alg _R\rightarrow \Alg _{R'}, \ \ A\mapsto A\otimes _RR'
\]
and
\[
    i_*\colon \Alg _{R'}\rightarrow \Alg _R, \ \ (R'\rightarrow A)\mapsto (R'\rightarrow R\rightarrow A).
\]
Here the notation reflects the geometric viewpoint of the morphism $\Spec(R')\rightarrow \Spec(R)$ defined by $i$.  As discussed in \cite{CS} these functors have animated versions \cite[5.1.7]{CS}
\begin{equation}\label{eq:i^*ani}
    i^{*\ani }\colon\Alg ^\ani _R\rightarrow \Alg ^{\ani }_{R'}, \ \ A\mapsto A\otimes _R^{\mathbf{L}}R'
\end{equation}
and
\begin{equation}\label{eq:i_*ani}
    i_*^\ani \colon \Alg ^\ani _{R'}\rightarrow \Alg ^{\ani }_R, \ \ (R'\rightarrow A)\mapsto (R'\rightarrow R\rightarrow A).
\end{equation}
\end{pg}

\begin{pg}
For an $\infty $-category $\mls D$ we can consider (see \cite[5.1.0.1]{LurieHTT}) the category
\[
    \mls P_{\mls D}(\Alg_R^{\ani,\op}):=\mathrm{Fun}(\Alg_R^{\ani},\mls D)
\]
of presheaves on $\Alg _R^\ani $ taking values in $\mls D$ (note here that since we are considering rings, rather than affine schemes, we consider covariant functors). In the case when $\mls D = \mls D(\Ab )$, the derived $\infty $-category of abelian groups, we will write simply $\mls P(\Alg_R^{\ani,\op})$ for $\mls P_{\mls D(\Ab)}(\Alg_R^{\ani,\op})$.
\end{pg}

\begin{pg}
There is a natural extension of the notion of an \'{e}tale, fppf, or fpqc cover of ordinary rings to animated rings \cite[\S5.2.3]{CS}. We say that a presheaf $\mls F\in\mls P_{\mls D}(\Alg_R^{\ani,\op})$ is a \emph{sheaf} with respect to one of these topologies if it satisfies the sheaf condition for every cover.
We write
\[
    \Shv_{\mls D}(\Alg_R^{\ani,\op})\subset\mls P_{\mls D}(\Alg_R^{\ani,\op})
\]
for the full subcategory spanned by the sheaves for the fppf topology. As before, if we are considering $\mls D= \mls D(\Ab )$ then we omit the subscript $\mls D$.
It follows from  \cite[6.2.2.7]{LurieHTT} that the inclusion
\[
    \Shv_{\mls D}(\Alg_R^{\ani,\op})\hookrightarrow \mls P_{\mls D}(\Alg_R^{\ani,op})
\]
has a left adjoint 
\[
    \sh\colon\mls P_{\mls D}(\Alg_R^{\ani,\op})\rightarrow \Shv_{\mls D}(\Alg_R^{\ani,\op}), 
\]
called \emph{sheafification}.

\begin{defn}\label{def:cohomology on animated rings}
    For a $\mls D(\Ab)$-valued presheaf $\mls F\in \mls P(\Alg_R^{\ani,\op})$ and an animated $R$-algebra $A\in\Alg_R^{\ani}$, we define the \emph{cohomology} of $\mls F$ on $A$ (with respect to the fppf topology) by
    \[
        \R\Gamma (A, \mls F):= (\sh\mls F)(A)\in\mls D(\Ab).
    \]
    For an integer $i$ we set $\rH^i(A,\mls F):=\mls H^i(\R\Gamma(A,\mls F))$.
\end{defn}

\end{pg}

\begin{pg}\label{pg:no new covers}
As described in \cite[\S 5.2.3]{CS}, ordinary $R$-algebras do not acquire any new covers when regarded as $0$-truncated animated $R$-algebras. That is, if $A$ is a $0$-truncated animated $R$-algebra and $A\to A'$ is an \'{e}tale (resp. fppf, resp. fpqc) cover, then $A'$ is also $0$-truncated, and the map $A\to A'$ is an \'{e}tale (resp. fppf, resp. fpqc) cover, in the usual sense. It follows that the restriction functor
\[
    \mls P_{\mls D}(\Alg_R^{\ani,\op})\to\mls P_{\mls D}(\Alg_R^{\op})
\]
restricts to a functor
\[
    \Shv_{\mls D}(\Alg_R^{\ani,\op})\to\Shv_{\mls D}(\Alg_R^{\op})\simeq\Shv_{\mls D}(\Spec R)
\]
on the subcategories of sheaves.

\begin{rem}
    If $\mls D$ is a stable $\infty $-category, such as $\mls D(\Ab)$, then $\mls P_{\mls D}(\Alg_R^{\ani,\op})$ and $\Shv_{\mls D}(\Alg_R^{\ani,\op})$ are again stable $\infty $-categories by \cite[1.1.3.1]{LurieHA}.
\end{rem}

\end{pg}

\begin{pg}\label{SS:3.7}{\bf Continuous sheaves and animation.}
As described in \cite[5.1.4]{CS} and \cite[Proposition A.5]{BL2}, the category $\Alg^{\ani}_R$ can be characterized as the $\infty$-category freely generated by $\Poly_R$ under sifted colimits. More precisely, the functor
\[
    \Poly_{R}\hookrightarrow \Alg _R^\ani
\]
given by the composition of the inclusion $\Poly_{R}\hookrightarrow\Alg_R$ with the embedding of Example \ref{ex:ordinary and animated algebras} is universal among functors from $\Poly_{R}$ to $\infty $-categories admitting sifted colimits, in the sense that for any such $\infty $-category $\mls D$ the restriction functor
\begin{equation}\label{E:4.8.1}
    \mathrm{Fun}_{\cts}(\Alg _R^\ani,\mls D)\rightarrow\mathrm{Fun}(\Poly_{R},\mls D)
\end{equation}
is an equivalence (here, the subscript ``$\cts$'' on the left denotes functors that commute with sifted colimits).
\end{pg}

\begin{defn}\label{D:7.13} 
Let $\mls D$ be an $\infty$-category admitting all sifted colimits. 
A functor $\mls F\colon \Alg _R^\ani\rightarrow \mls D(\Ab)$ is \emph{continuous} if it commutes with sifted colimits.
\end{defn}

\begin{rem}
    By \cite[5.5.8.17]{LurieHTT} a functor $\Alg_R^{\ani}\to\mls D(\Ab)$ is continuous if and only if it commutes with filtered colimits and geometric realizations.
\end{rem}

\begin{pg}[The animation of a functor]
\label{pg:animations}
Given a full subcategory $\mls C\subset\Alg_R^{\ani}$ containing $\Poly_R$ and a functor $\mls F\colon \mls C\to\mls D$, the \emph{animation} of $\mls F$ is the functor
    \[
        \mls F^{\ani}\colon\Alg_R^{\ani}\to\mls D
    \]
(sometimes denoted by $\rL\mls F$) obtained from $\mls F$ by taking the left Kan extension, in the sense of \cite[4.3.2.2]{LurieHTT}, of the restriction $\mls F\big\lvert_{\Poly_R}$ along the inclusion $\Poly_R\hookrightarrow\Alg_R^{\ani}$. In particular, $\mls F^{\ani}$ is continuous, and there is a canonical map $\mls F^{\ani}\to\mls F$ that restricts to an equivalence on $\Poly_R$ and is an equivalence if and only if $\mls F$ is continuous.
\end{pg}


\begin{example}
    Animating the functor $\uLOmega^1$ of \ref{ex:cotangent complex} gives a functor
    \[
        \left(\uLOmega^1\right)^{\ani}\colon\Alg_{\bF_p}^{\ani}\to\mls D(\Ab)
    \]
    whose value on an animated $\bF_p$-algebra $A$ is the animated cotangent complex $\rL\Omega^1_A$ of $A$ (as described by Bhatt--Lurie \cite[Construction B.1]{BL2}). This functor is the animation of the functor $P\mapsto\Omega^1_P$ on polynomial $\bF_p$-algebras, and is an fppf sheaf.
\end{example}

\begin{example}\label{E:4.11} 
    For a ring $R$ and a complex of $R$-modules $\mls V$ we have the functor
    \begin{equation}\label{eq:animated complex of modules}
        \rH^{\ani}_{\mls V}\colon\Alg_R^{\ani}\to\mls D(\Ab),\hspace{1cm}A\mapsto\mls V\otimes^{\rL}_RA,
    \end{equation}
    where the right side denotes the derived tensor product of $R$-modules. We claim that if $\mls V$ is a perfect complex then this functor is continuous, and therefore is canonically isomorphic to the animation of the functor $\rH_{\mls V}\colon\Alg_R\to\mls D(\Ab)$.
    Indeed, arguing as in \ref{ex:functor from a complex} we may reduce to the case when $\mls V$ is a single projective $R$-module say $P$ in degree 0. Consider the functor $\Alg_R^{\ani}\to\mls D^{\leq 0}(\Mod_R)$ obtained by composing the forgetful functor to simplicial $R$-modules with the Dold--Kan correspondence. This functor commutes with sifted colimits (see e.g. \cite[5.1.3]{CS} ff.). The inclusion $\mls D^{\leq 0}(\Mod_R)\hookrightarrow\mls D(\Mod_R)$ is left adjoint to the truncation $\tau_{\leq 0}$, hence preserves all colimits. Hence the composition $\Alg_R^{\ani}\to\mls D(\Mod_R)$ commutes with sifted colimits. The claim follows by noting that $\rH^{\ani}_{\mls V}$ is the composition of this functor with the functor $P\otimes^{\rL}\gap\colon\mls D(\Mod_R)\to\mls D(\Mod_R)$, which arguing as in \ref{ex:functor from a complex} also preserves sifted colimits. Via a similar reduction, one shows that~\eqref{eq:animated complex of modules} is also an fppf sheaf.
\end{example}

\begin{example}\label{E:C functor, animated version}
    For a ring $R$ of characteristic $p$ and a pair $(\mls V,\rho)$ where $\mls V$ is a complex of $R$-modules and $\rho \colon \rL\F_R^*\mls V\to\mls V$ is a map in $\mls D(\Mod_R)$, we define a functor
    \[
        \mls C^{\ani}_{(\mls V,\rho)}\colon\Alg_R^{\ani}\to\mls D(\Ab)
    \]
    by the same formula as in \ref{not:notation for C_G}, but using the animated tensor product in place of the derived tensor product and the animated cotangent complex in place of the ordinary cotangent complex. From the previous examples it follows that, if $\mls V$ is perfect, then $\mls C^{\ani}_{(\mls V,\rho)}$ is the animation of the functor $\mls C_{(\mls V,\rho)}$ defined in \ref{not:notation for C_G}, and is an fppf sheaf.
\end{example}

\begin{example}\label{ec:G ani}
    Let $\bG$ be a commutative affine group scheme over a ring $R$. Let $\mls O_{\bG}$ be the coordinate ring of $\bG$. Under the embedding $\Alg_R\hookrightarrow\Alg_R^{\ani}$ we regard $\mls O_{\bG}$ as a 0-truncated animated $R$-algebra. As discussed in \cite[5.1.11]{CS}, for an animated $R$-algebra $A$ the group operation in $\bG$ endows $\Hom_{\Alg_R^{\ani}}(\mls O_{\bG},A)$ with a functorial structure of an animated abelian group, which under the Dold--Kan correspondence we may regard as an object of $\mls D^{\leq 0}(\Ab)$. We write
    \[
        \bG^{\ani}(\gap):=\Hom_{\Alg_R^{\ani}}(\mls O_{\bG},\gap)\colon\Alg_R^{\ani}\to\mls D^{\leq 0}(\Ab)
    \]
    for the resulting functor. This functor may alternatively be described as the animation of the functor $\Poly_R\to\mls D^{\leq 0}(\Ab),\,P\mapsto\bG(P)$ (regarding $\bG(P)$ as a complex of abelian groups concentrated in degree 0). In particular, $\bG^{\ani}$ is continuous. By fppf descent $\bG^{\ani}$ is also an fppf sheaf. Composing $\bG^{\ani}$ with the inclusion $\mls D^{\leq 0}(\Ab)\hookrightarrow\mls D(\Ab)$ gives a functor $\Alg_R^{\ani}\to\mls D(\Ab)$. The inclusion is left adjoint to the truncation functor $\tau_{\leq 0}$, hence preserves colimits, and so this functor is also continuous. However, it need not be an fppf sheaf. We write
    \[
        \rH_{\bG^{\ani}}\colon\Alg_R^{\ani}\to\mls D(\Ab)
    \]
    for its fppf sheafification. As in Definition \ref{def:cohomology on animated rings}, our notation for the value of this functor on an animated $R$-algebra $A$ is
    \[
        \left(\rH_{\bG^{\ani}}\right)(A)=\R\Gamma(A,\bG^{\ani}).
    \]
    It follows from the above discussion that if $A$ is an animated $R$-algebra then we have
    \[
        \bG^{\ani}(A)\simeq\tau_{\leq 0}\,\R\Gamma(A,\bG^{\ani}).
    \]
    Furthermore, if $A$ is an ordinary $R$-algebra, then as noted above in \ref{pg:no new covers} the cohomology $\R\Gamma(A,\bG^{\ani})$ defined here is identified with the usual fppf cohomology $\R\Gamma(A,\bG)=\R\Gamma(\Spec A,\bG)$.
\end{example}

\begin{example}\label{ex:continuity examples}
    Let $\bG$ be a commutative affine group scheme over $R$. The following examples show in particular that the functor $\rH_{\bG^{\ani}}$ need not be continuous.
    \begin{enumerate}
        \item [(i)] The functor $\rH _{\bG_a^\ani}$ associated to the additive group $\bG_a$ is continuous. Indeed, by \cite[5.2.9]{CS} and the vanishing of coherent cohomology on affine schemes, we have that $\rH^i(A,\bG_a^{\ani})=0$ for $i>0$ for every animated $R$-algebra $A$. Thus the functor $\bG_a^{\ani}\colon\Alg_R^{\ani}\to\mls D(\Ab)$ is already an fppf sheaf, and so agrees with its fppf sheafification $\rH_{\bG_a^{\ani}}$. But $\bG_a^{\ani}$ is the same as the functor $\rH_R^{\ani}$ associated to the perfect complex $R$, which is continuous by Example \ref{E:4.11}.
        \item [(ii)] From (i) and the short exact sequence
        \[
            0\to\balpha_p\to\bG_a\xrightarrow{\F}\bG_a\to 0
        \]
        it follows that $\rH _{\balpha _p^\ani }$ is also continuous. More generally, we show in Theorem \ref{thm:coho is continuous} below that $\rH_{\bG^{\ani}}$ is continuous for any height 1 group scheme $\bG$ over $R$.
        \item [(iii)] If $\ell$ is a prime invertible in $R$, then $\rH_{\bmu_{\ell}^{\ani}}$ need not be continuous in general. For example, if $R$ is a separably closed field, then for any finitely generated polynomial algebra $P$ over $R$ we have that $\rH^i(P,\bmu_{\ell})=0$ for $i\geq 1$. However, the \'{e}tale cohomology groups of affine schemes with coefficients in $\bmu_{\ell}$ can be nonzero in higher degrees.
        \item [(iv)] For similar reasons, the functor $\rH_{\bG_m^{\ani}}$ associated to the multiplicative group $\bG_m$ is not continuous in general.
    \end{enumerate}

\end{example}

\begin{rem}
    Let $\bG$ be a commutative affine group scheme over $R$. Regarding $\bG$ as a complex of fppf sheaves concentrated in degree 0, we may form the functor $\rH_{\bG}\colon\Alg_R\to\mls D(\Ab),R'\mapsto\R\Gamma(R',\bG)$, and then take the animation to obtain a functor $\left(\rH_{\bG}\right)^{\ani}\colon\Alg_R^{\ani}\to\mls D(\Ab)$. From the isomorphism $\rH_{\bG}\simeq\rH_{\bG^{\ani}}\lvert_{\Alg_R}$ of functors $\Alg_R\to\mls D(\Ab)$ we obtain a canonical map $\left(\rH_{\bG}\right)^{\ani}\to\rH_{\bG^{\ani}}$, which is an isomorphism if and only if $\rH_{\bG^{\ani}}$ is continuous.
\end{rem}

The following extends \ref{thm:flat cohomology is Kan extended from polynomial algebras} to animated $R$-algebras.

\begin{thm}\label{thm:coho is continuous}
    For a ring $R$ of characteristic $p$ and a commutative finite locally free height 1 group scheme $\bG$ over $R$, the functor
    \[
        \rH_{\bG^{\ani}}\colon\Alg_R^{\ani}\to\mls D(\Ab),\hspace{1cm}A\mapsto \R\Gamma (A, \bG^{\ani})
    \]
    is continuous.
\end{thm}
\begin{proof}
By \cite[5.1.4]{CS} for each $n\geq 0$ there is a functor $\tau _{\leq n}\colon\Alg _R^\ani \rightarrow \Alg _R^\ani $ which is left adjoint to the inclusion of the full subcategory of $\Alg_R^{\ani}$ spanned by the $n$-truncated objects. For each $n\geq 0$ we have by \cite[5.2.8]{CS} a fiber sequence in $\mls D(\Ab)$
    \begin{equation}\label{E:sift}
        \mathrm{RHom}_R(\ell_{\bG/R},\pi _{n+1}(A)[n+1])\rightarrow \R\Gamma (\tau _{\leq n+1}A, \bG^{\ani})\rightarrow \R\Gamma (\tau _{\leq n}A, \bG^{\ani})
    \end{equation}
    where $\ell_{\bG/R}:=\rL e^*\rL\Omega_{\bG/R}$ is the \emph{co-Lie complex} of $\bG$ and $e$ is the identity section of $\bG$.
    Since $\bG$ is a local complete intersection over $R$, the complex $\ell_{\bG/R}$ has perfect amplitude in $[-M, 0]$ for some $M\geq 0$, and therefore $\mathrm{RHom}_R(\ell_{\bG/R}, \pi _{n+1}(A)[n+1])$ is concentrated in degrees $\leq M-n-1$. In particular, for each integer $i$ the map
    \[
        \rH^i(\tau _{\leq n+1}A, \bG^{\ani})\rightarrow \rH^i(\tau _{\leq n}A, \bG^{\ani})
    \]
    is an isomorphism for  $n>M-i-1$. By \cite[5.2.7]{CS} we have
    \begin{equation}\label{eq:postnikov completeness}
        \R\Gamma(A,\bG^{\ani})\simeq\holim _n\,\R\Gamma(\tau_{\leq n}A,\bG^{\ani})
    \end{equation}
    and so we conclude (see also \cite[\href{https://stacks.math.columbia.edu/tag/08U5}{Tag 08U5}]{stacks-project}) that for every $i$ there exists an integer $n_0$ such that for all $n\geq n_0$  the map
    \[
        \rH^i(A, \bG^{\ani})\rightarrow \rH^i(\tau _{\leq n}A, \bG^{\ani})
    \]
    is an isomorphism for all $A\in \Alg _R^\ani $. Let $\mls D^{\geq s}(\Ab )$ be the $\infty $-category of complexes with $\mls H^i = 0$ for $i<s$ and let $\tau ^{\geq s}\colon\mls D(\Ab )\rightarrow \mls D^{\geq s}(\Ab )$ be the functor left adjoint to the inclusion $\mls D^{\geq s}(\Ab )\hookrightarrow \mls D(\Ab )$. Then we have that for each integer $s$ there exists an integer $n_0$ such that for all $n\geq n_0$ the map
    \[
        \tau^{\geq s}\,\R\Gamma(A,\bG^{\ani})\to\tau^{\geq s}\,\R\Gamma(\tau_{\leq n}A,\bG^{\ani})
    \]
    is an isomorphism for all $A\in\Alg_R^{\ani}$.
    
    Now consider a diagram $B\colon I\rightarrow \Alg _R^\ani $ of animated $R$-algebras. Since $\tau ^{\geq s}$ is a left adjoint it commutes with colimits, so for each $s$ we have
    \[
        \tau^{\geq s}(\hocolim_{d\in I}\R\Gamma (B_d, \bG^{\ani}))\simeq \hocolim_{d\in  I}(\tau^{\geq s}\R\Gamma (B_d, \bG^{\ani}))\simeq \tau^{\geq s}(\hocolim_{d\in I}\R\Gamma (\tau _{\leq n}B_d, \bG^{\ani}))
    \]
    and
\begin{align*}
    \tau^{\geq s}(\R\Gamma (\hocolim_{d\in I}B_d, \bG^{\ani}))&\simeq \tau^{\geq s}(\R\Gamma (\tau _{\leq n}(\hocolim_{d\in I}B_d), \bG^{\ani}))\\
    & \simeq \tau ^{\geq s}(\R\Gamma (\hocolim_{d\in I}(\tau _{\leq n}B_d), \bG^{\ani}))
\end{align*}
for some $n$ depending only on $s$ and $\bG$. It follows from this that to prove that $\R\Gamma (\rule{.25cm}{0.4pt}, \bG^{\ani})$ commutes with sifted colimits in $\Alg _R^\ani $ it suffices to verify that, for every $n\geq 0$, this functor commutes with sifted colimits taking values in $n$-truncated objects. This we do by induction on $n$.

For the base case of $n=0$, we note that the inclusion $\Alg_R\hookrightarrow\Alg_R^{\ani}$ is exactly the inclusion of the full subcategory of $0$-truncated objects, and by \ref{thm:flat cohomology is Kan extended from polynomial algebras} the functor $A\mapsto\R\Gamma(A,\bG^{\ani})$ on ordinary $R$-algebras commutes with sifted colimits. The inductive step follows from a consideration of the fiber sequence \eqref{E:sift} and the fact that $\ell_{\bG/R}:=\rL e^*\rL\Omega _{\bG/R}$ is perfect.
\end{proof}

\begin{rem}\label{rem:derived hoobler sequence}
    Let $R$ be an $\mathbf{F}_p$-algebra and let $\bG$ be a commutative finite locally free height 1 group scheme over $R$. Theorem \ref{thm:coho is continuous} implies that flat cohomology $\R\Gamma(A,\bG^{\ani})$ for an animated $R$-algebra $A$ may be described exactly as in \ref{thm:derived hoobler}, with the appearing cotangent complexes of ordinary rings being replaced with the cotangent complexes for animated rings defined in Example \ref{ex:cotangent complex}.
\end{rem}

\begin{pg}{\bf Global formulation.}
We will also consider sheaves and cohomology over non-affine schemes. Let $X$ be an algebraic space. We will avoid defining the notion of an affine animated scheme over $X$, and instead give an ad-hoc definition of the categories of animated presheaves and sheaves on $X$. Let $\mls D$ be an $\infty$-category with all finite limits. We define
\[
    \mls P_{\mls D}(X^{\ani}):= \holim_{R\in\Alg_X}\mls P_{\mls D} (\Alg _{R}^{\ani, \op })\hspace{1cm}\text{and}\hspace{1cm}\Shv_{\mls D}(X^{\ani}):= \holim_{R\in\Alg_X}\Shv_{\mls D} (\Alg _{R}^{\ani, \op }).
\]
As before, in the case when $\mls D=\mls D(\Ab)$ we omit the subscript. We remark that this notation is somewhat abusive, as we have not assigned an independent meaning to $X^{\ani}$. Our intent is to suggest that we are animating the category of $X$-schemes, locally in the fppf topology. Note that if $R$ is a ring then the category $\Alg_{\Spec R}\simeq\Alg_R$ has an initial object and we have
\[
    \mls P_{\mls D}((\Spec R)^{\ani})\simeq\mls P_{\mls D}(\Alg_R^{\ani,\op})\hspace{1cm}\text{and}\hspace{1cm}\Shv_{\mls D}((\Spec R)^{\ani}) \simeq \Shv_{\mls D} (\Alg _R^{\ani, \op }).
\]
Given an affine $X$-scheme $\Spec R\to X$ we obtain maps
\[
    \mls P_{\mls D}(X^{\ani})\to\mls P_{\mls D}(\Alg_R^{\ani,\op})\hspace{1cm}\text{and}\hspace{1cm}\Shv_{\mls D}(X^{\ani})\to\Shv_{\mls D}(\Alg_R^{\ani,\op})
\]
which we denote by $\mls F\mapsto\mls F_R$. We will adopt the suggestive notation $\R\Gamma(R,\mls F):=\mls F_R(R)$.

\end{pg}
\begin{example}\label{ex:some global animated functors}
    The functors of Examples \ref{E:4.11}, \ref{E:C functor, animated version}, and \ref{ec:G ani} are all fppf sheaves, and so globalize as follows.
    \begin{enumerate}
        \item[(i)]\label{item:global animated complex} For a complex $\mls V^{}$ of quasicoherent sheaves on $X$ we let $\rH_{\mls V^{}}^{\ani}\in\Shv(X^{\ani})$ denote the animated sheaf whose image in $\Shv(\Alg_R^{\ani,\op})$ for $R\in\Alg_X$ is $\rH_{\mls V^{}_R}^{\ani}$~\eqref{eq:animated complex of modules}. This determines a functor
        \[
            \mls D(\text{Qcoh}(X))\to\Shv(X^{\ani}),\hspace{1cm}\mls V^{}\mapsto\rH_{\mls V^{}}^{\ani}.
        \]
        \item[(ii)]\label{item:global functor C} For a pair $(\mls V^{},\rho)$ where $\mls V^{}$ is a complex of quasicoherent sheaves on $X$ and $\rho \colon\rL\F_X^*\mls V^{}\to\mls V^{}$ is a map in $\mls D(X_{\et},\mls O_{\et})$, we let $\mls C_{(\mls V^{},\rho)}^{\ani}\in\Shv(X^{\ani})$ denote the animated sheaf whose image in $\Shv(\Alg_R^{\ani,\op})$ for $R\in\Alg_X$ is $\mls C_{(\mls V^{}_R,\rho_R)}^{\ani}$.
        \item[(iii)]\label{item:global animated H_G} For a commutative group scheme $\bG$ over $X$, we let $\rH_{\bG^{\ani}}\in\Shv(X^{\ani})$ denote the animated sheaf whose image in $\Shv(\Alg_R^{\ani,\op})$ is $\rH_{\bG_R^{\ani}}$. For a complex $\bG^{}$ of commutative group schemes over $X$ we similarly define $\rH_{\bG^{\ani}}\in\Shv(X^{\ani})$.
    \end{enumerate}
\end{example}

\begin{rem}\label{rem:str perf complex of vector bundles and animation}
    If $\mls V^{}$ is a strictly perfect complex of locally free sheaves on $X$, then $\rH_{\mls V^{}}^{\ani}\simeq\rH_{\bV^{\,\ani}}$, where $\bV^{}$ is the strictly perfect complex of vector bundles associated to $\mls V^{}$.
\end{rem}

\begin{rem}
	Note that we have \emph{not} defined an animated sheaf associated to a general perfect complex of commutative group schemes, only to a strictly perfect complex of commutative group schemes.
\end{rem}

\begin{pg}\label{P:4.6b}
For a morphism of algebraic spaces
\[
    f\colon X\rightarrow Y
\]
we have the functor
\[
    f^{\ani *}\colon\mls P_{\mls D} (Y^{\ani})\rightarrow \mls P_{\mls D} (X^{\ani})
\]
induced locally by the restriction maps $\mls P_{\mls D}(\Alg_{B}^{\ani,\op})\to\mls P_{\mls D}(\Alg_{A}^{\ani,\op})$, $\mls F\mapsto\mls F\circ i^{*,\ani}$ for affine $X$-schemes $\Spec A\to X$, affine $Y$-schemes $\Spec B\to Y$, and maps $i\colon B\to A$ of rings which induce a map $\Spec A\to\Spec B$ restricting $f$. This functor has a right adjoint 
\[
    f_*^\ani \colon \mls P_{\mls D}(X^{\ani})\rightarrow \mls P_{\mls D} (Y^{\ani})
\]
defined as follows. Let $\mls F\in \mls P_{\mls D}(X^{\ani})$ be a presheaf.
For an affine $Y$-scheme $\Spec(R)\rightarrow Y$ there is an induced functor
\begin{align*}
    \Alg_{X_R}\times \Alg_R^\ani &\rightarrow \mls D,\\
    (B, A)&\mapsto \mls F_B(B\otimes _R ^{\mathbf{L}}A)
\end{align*}
where $X_R:=X\times_Y\Spec R$. We recall that $\mls F_B\in\mls P_{\mls D}(\Alg_B^{\ani,\op})$ is the image of $\mls F$ under the natural restriction map, and so it makes sense to evaluate $\mls F_B$ on the animated $B$-algebra $B\otimes_R^{\mathbf{L}}A$. We set
\begin{equation}\label{E:4.6b}
    (f^\ani _*\mls F)_R(A):= \holim_{B\in \Alg_{X_R}}\mls F_B(B\otimes _R^{\mathbf{L}}A),
\end{equation}
Note that in this definition we could also restrict the homotopy limit to the category of $B\in \Alg _{X_R}$ with $\Sp (B)\rightarrow X_R$ \'etale. Varying $A$ we get a presheaf $(f^\ani _*\mls F)_R\in\mls P_{\mls D}(\Alg_R^{\ani,\op})$, and varying $R$ we obtain the object $f^{\ani}_*\mls F\in\mls P_{\mls D}(Y^{\ani})$.

These functors restrict to a pair of adjoint functors
\[
    f^{\ani *}\colon\Shv_{\mls D}(Y^{\ani})\rightarrow \Shv_{\mls D}(X^{\ani})
\]
and
\[
    f_*^\ani \colon\Shv_{\mls D}(X^{\ani})\rightarrow \Shv_{\mls D}(Y^{\ani})
\]
on the subcategories of sheaves.
\end{pg}

\begin{pg}
There are natural restriction maps
    \[
        \text{res}\colon\mls P_{\mls D}(X^{\ani})\to\mls P_{\mls D}(X)\hspace{1cm}\text{and}\hspace{1cm}\text{res}\colon\Shv_{\mls D}(X^{\ani})\to\Shv_{\mls D}(X)
    \]
induced by the restriction maps in the affine case.

\begin{lem}\label{lem:animated pushforward and ordinary pushforward}
If $f\colon X\rightarrow Y$ is a flat morphism of algebraic spaces, then the diagram
    \begin{equation}\label{eq:restriction inflation square}
        \begin{tikzcd}
            \Shv_{\mls D}(X^{\ani})\arrow{d}[swap]{f^{\ani}_*}\arrow{r}{\text{res}}&\Shv_{\mls D}(X)\arrow{d}{\R f_*}\\
            \Shv_{\mls D}(Y^{\ani})\arrow{r}{\text{res}}&\Shv_{\mls D}(Y)
        \end{tikzcd}
    \end{equation}
commutes up to homotopy (and similarly for the corresponding diagram of presheaf categories).
\end{lem}
\begin{proof}
    This follows from the definition of $f_*^\ani $ given in \ref{P:4.6b} and the observation that since $f$ is flat the derived tensor products in \eqref{E:4.6b} are equal to their ordinary tensor products if $\Sp (B)\rightarrow X_R$ is flat.
\end{proof}
\end{pg}

\begin{pg}
    For a presheaf $\mls F\in\mls P_{\mls D}(X)$ we define its \emph{animation} $\mls F^{\ani}\in\mls P_{\mls D}(X)$ by functoriality from the affine case. The resulting functor
    \[
        \mls P_{\mls D}(X)\to\mls P_{\mls D}(X^{\ani}),\hspace{1cm}\mls F\mapsto\mls F^{\ani}
    \]
    is left adjoint to restriction. We remark that in general the animation of a sheaf need not again be a sheaf.
\begin{defn}
    A presheaf $\mls F\in \mls P_{\mls D}(X^{\ani})$ is \emph{continuous} if its image in $\mls P_{\mls D}(\Alg_{R}^{\ani, \op })$ is continuous for all $R\in\Alg_X$.
\end{defn}
The following shows that continuity of a sheaf is a local property and is preserved under pushforward.

\begin{lem}\label{L:3.15}
Let $X$ be a quasi-compact separated scheme and let $f\colon Y\to Z$ be a quasi-compact separated morphism.
\begin{enumerate}
    \item Let $X = \cup _{i\in I}U_i$ be a finite open cover of $X$ by affine schemes $U_i = \Spec(R_i)$.  Then a sheaf $\mls F\in \Shv_{\mls D}(X^{\ani})$ is continuous if and only if its image in $\Shv_{\mls D}(\Alg_{R_i}^{\ani , \op })$ is continuous for each $i\in I$.
    \item If $\mls F\in\Shv_{\mls D}(Y^{\ani})$ is continuous, then the pushforward $f_*^\ani\mls F\in\Shv_{\mls D}(Z^{\ani})=\Shv_{\mls D}(Z^{\ani})$ is also continuous.
\end{enumerate}
\end{lem}
\begin{proof}
For (1) consider the associated hypercover $U_\bullet  \rightarrow X$.  Since $X$ is separated each $U_n$ is a disjoint union of affine schemes given by the $n$-fold intersections of the $U_i$.  Furthermore, since the cover is finite $U_\bullet $ is $m$-truncated for some $m$. If $u_n\colon U_n\rightarrow X$ is the projection then since $\mls F$ satisfies descent we have 
\[
    \mls F\simeq \holim_nu_{n*}^\ani\left(\mls F\big\lvert_{U_n}\right).
\]
The skeletal filtration therefore induces a finite filtration on $\mls F$ whose associated graded pieces are of the form $u_{\underline i*}^{\ani }\mls F|_{U_{\underline i}}$, where $\underline i = (i_1, \dots, i_s)$ is a multiindex and $u_{\underline i}\colon U_{\underline i} = U_{i_1}\cap \cdots \cap U_{i_s}\rightarrow X$ is the inclusion of the intersection.  Statement (1) then follows from noting that $u_{\underline i*}^{\ani }\mls F\big\lvert_{U_{\underline i}}$ is continuous since $U_{\underline i}$ includes into $U_i$ for some $i$.

For (2), we may reduce to the case when $Z=\Spec R$ is affine. Then $Y$ is quasi-compact and separated, and arguing as in (1) we can find a finite filtration on $f_*^\ani \mls F$ whose associated graded pieces are pushforwards of restrictions of $\mls F$ to affines.
\end{proof}

\end{pg}

\section{Formal cohomology}\label{sec:formal cohomology}

In this section we discuss the key technical ingredients for the proof of \ref{T:1.3}, which relate flat cohomology to a  formally completed version.  The main result used in the following sections is \ref{C:keystatement} below. The results of this section are naturally viewed in the context of a theory of compactly supported flat cohomology. We briefly discuss this in section \ref{S:compact}.

\begin{pg}\label{S:section6}{\bf The $\infty $-category of pro-objects.}
Let $\mls C$ be an $\infty $-category that admits finite limits and is idempotent complete.
Then by \cite[3.1.1 and 3.1.2]{LurieDAGXIII} the $\infty $-category of pro-objects in $\mls C$, denoted by $\Pro (\mls C)$,  can be defined as the full subcategory of the category $\mathrm{Fun}(\mls C, \Spc)^\op $   spanned by those functors which preserve finite limits.  As discussed in \cite[3.1.2]{LurieDAGXIII} this category $\Pro (\mls C)$ can also be described as
\[
    \Pro (\mls C)\simeq \Ind (\mls C^\op )^\op ,
\]
where $\Ind (\mls C^\op )$ is defined as in \cite[5.3.5.1]{LurieHTT}.  By the $\infty$-categorical Yoneda lemma \cite[5.1.3.1]{LurieHTT} the natural functor
\begin{equation}\label{eq:yoneda embedding in pro category}
    \mls C\hookrightarrow \Pro(\mls C)
\end{equation}
is fully faithful, so we often view $\mls C$ as a sub-category of $\Pro (\mls C)$. Given a full subcategory $\mls D\subset\mls C$ we obtain an embedding
\begin{equation}\label{eq:embedding of pro category 2}
    \Pro(\mls D)\hookrightarrow\Pro(\mls C).
\end{equation}

 
  \end{pg}

\begin{pg}{\bf Formal cohomology of animated sheaves.}
  
    \begin{notation}\label{not:subcategories, animated case}
        Let $X$ be an algebraic space $X$. We let $\Shvf(X^{\ani})\subset\Shv(X^{\ani})$ denote the full stable $\infty$-subcategory generated by the sheaves $w_{*}^\ani\rH$ for $w\colon W\rightarrow X$ a finite morphism and $\rH$ equal to either $\rH_{\mls V^{}}^{\ani}$ for a perfect complex $\mls V^{}$ over $W$ or to $\rH _{\bG ^\ani _W}$ for a commutative finite locally free group scheme $\bG_W$ over $W$.
\end{notation}
\end{pg}

\begin{pg}\label{P:5.2}
 Let $X$ be a noetherian algebraic space and let $i\colon Z\hookrightarrow X$ be a closed subspaces defined by an ideal $I\subset \mls O_X$.  Let
\[
    i_n\colon Z_n\hookrightarrow X
\]
be the closed subspace defined by $I^n$ so we have a sequence of closed immersions
\[
    Z = Z_1\hookrightarrow Z_2\hookrightarrow \cdots \hookrightarrow Z_n\hookrightarrow \cdots \hookrightarrow X.
\]
For an object $\mls F\in \Shv(X^{\ani})$ let $\mls F_n\in \Shv(Z_n^{\ani})$ be the restriction of $\mls F$ to $Z_n$. Define
\begin{equation}\label{E:prodef}
    \widehat {\mls F}:= \mathrm{holim}_n \ i_{n*}^\ani \mls F_n\in\Pro(\Shv (X^\ani)).
\end{equation}
We refer to $\widehat {\mls F}$ as the \emph{completion of $\mls F$ with respect to $Z$}.
\end{pg}

\begin{pg}\label{pg:affine case}
In the case when $X = \Sp (A)$ and $Z$ is defined by an ideal $I = (a_1, \dots, a_r)\subset A$ we can also consider the derived reduction
\[
    A_n^\ani := A\otimes ^{\mathbf{L}}_{a_i^n\mapsfrom X_i, \mathbf{Z}[X_1, \dots, X_r],  X_i\mapsto 0}\mathbf{Z}
\]
and the induced functor
\[
    B\mapsto \mls F(A_n^\ani \otimes _A^{\mathbf{L}}B)
\]
defining an object $\mls F_n'\in \Shv(X^{\ani})$.  Set
\[
    \widehat {\mls F}':= \mathrm{holim}_n \ \mls F_n'\in\Pro(\Shv(X^{\ani})).
\]
The following shows that these two approaches are equivalent in the main cases of interest.
\end{pg}
\begin{lem}\label{L:9.4b}
    In the situation of \ref{pg:affine case}, if $\mls F\in\Shvf(X^{\ani})$ then the natural map $\widehat {\mls F}'\rightarrow \widehat {\mls F}$ is an equivalence.
\end{lem}
\begin{proof} 
Replacing $A$ by its $I$-adic completion it suffices to prove the lemma in the case when $A$ is furthermore complete with respect to $I$. The ring $A_n^\ani $ can be described by a Koszul complex construction as in \cite[\S 3]{KST}, and therefore its homology groups are given by the homology of the Koszul complex $\mc K_n^\bullet $ associated to the sequence $(a_1^n, \dots, a_r^n)$.  The functoriality 
\[
    A_n^\ani \rightarrow A_{n-1}^\ani 
\]
is described by the functoriality of the Koszul complex using the diagram
\[
    \begin{tikzcd}
        A^r\arrow{rr}{(\cdot a_1,\ldots,\cdot a_r)}\arrow{dr}[swap]{(a_1^n,\ldots,a_r^n)}&&A^r\arrow{dl}{(a_1^{n-1},\ldots,a_n^{r-1})}\\
        &A.&
    \end{tikzcd}
\]
Now observe that by \cite[\href{https://stacks.math.columbia.edu/tag/0921}{Tag 0921}]{stacks-project} the projective systems $\{\mc K_n^\bullet \}$ and $\{A/(a_1^n, \dots, a_r^n)\}$ in $\mls D(\Mod(A))$ are isomorphic (as noted in the footnote to loc. cit. for every $n\geq 0$ there exists an integer $m\geq n$ such that the map $\mc K_m^\bullet \rightarrow \mc K_n^\bullet $ factors through $A/(a_1^m, \dots, a_r^m)$).

Now we prove the result. It suffices to consider the cases when $\mls F=w^\ani _*\rH$ with $w\colon W\rightarrow X$ a finite $X$-scheme and $\rH$ either $\rH _{\mls V }^{\ani}$ for a perfect complex $\mls V $ on $W$, or the case when $\rH =\rH_{\bG^{\ani}}$ for a commutative finite locally free $\bG$ on $W$.  Write $W = \Spec(C)$ with $C$ a finite $A$-algebra.  Chasing through the definition one finds that 
\begin{equation}\label{E:8.7.1}
\widehat {\mls F}'(B) = \holim _n\rH ((C\lotimes _AA_n^\ani )\lotimes _C(C\lotimes _AB))
\end{equation}
and
\[
    \widehat {\mls F}(B) = \holim _n\rH ((C\lotimes _AA_n)\lotimes _C(C\lotimes _AB)),
\]
with the map in question being induced by the natural map $C\lotimes _AA_n^\ani \rightarrow C\lotimes _AA_n$.  By the preceding discussion the projective system of cocones of these maps have eventually zero transition maps.

In the case when $\rH = \rH _{\mls V }^\ani $ for a perfect complex $\mls V $ on $W$ we conclude that the cone of the map of systems
\[
    \{\mls F_n'\}\rightarrow \{i_{n*}^\ani \mls F_n\}
\]
has eventually zero transition maps, and therefore is zero in the pro-category, proving our result in this case. In the case when $\rH = \rH _{\bG^\ani }$ for a commutative locally free group scheme $\bG $ on $W$ the result follows from the above discussion combined with \cite[5.1.10 (3) and 5.1.13]{CS}.
\end{proof}

\begin{lem}\label{L:8.7b} Let $f\colon Y\rightarrow X$ be an affine morphism of noetherian algebraic spaces, let $Z_X\hookrightarrow X$ be a closed subspace with preimage $Z_Y\hookrightarrow Y$. Let $\mls F\in \Shvf(Y^{\ani})$ be an object, let $\widehat {\mls F}$ be the completion of $\mls F$ with respect to $Z_Y\subset Y$, and let $\widehat {(f_*^\ani \mls F)}$ be the completion of the pushforward with respect to $Z_X$.  Then the natural map $\widehat {(f_*^\ani \mls F)}\rightarrow f_*^\ani \widehat {\mls F}$ is an isomorphism.
\end{lem}
\begin{proof}
The assertion is local on $X$ so it suffices to consider the affine case $X = \Spec(A)$ with $Z_X$ defined by an ideal $(a_1, \dots, a_n)\subset A$.  In this case it suffices to prove the analogous statement for $\widehat {\mls F}'$ instead of $\widehat {\mls F}$.  In this case the result follows from the formula \eqref{E:8.7.1} and the observation that $C\lotimes _AA_n^\ani \simeq C_n^\ani $.
\end{proof}

It will also be technically useful to have a third description of $\widehat {\mls F}$.  In the situation of \ref{pg:affine case} let $\widehat A$ be the derived $I$-adic completion of $A$ (in the sense of \cite[5.6.1]{CS}). 
\begin{lem}\label{L:8.8} For $\mls F \in \Shvf(\Spec (A)^{\ani})$
the natural map
    \[
        \R\Gamma (\widehat A, \mls F)\rightarrow \R\Gamma (A, \widehat {\mls F})
    \]
is an isomorphism.
\end{lem}
\begin{proof}
    It suffices to consider the case when $\mls F = w_*^\ani \rH$ for a finite morphism $\Sp (C)\rightarrow \Sp (A)$ and $\rH $ either $\rH _{\mls V }^\ani $ for a perfect complex $\mls V $ on $\Sp (C)$, or $\rH = \rH _{\bG ^\ani }$ for a commutative finite locally free $\bG $ on $\Sp (C)$, and $I = (a_1, \dots, a_n)$.  Now observe that since $C$ is finite over $A$ the Koszul description of $A_n$ discussed above gives that
    \[
        \widehat C = \holim _n C_n^\ani \simeq C\lotimes _A(\R \holim _nA_n) \simeq C\lotimes _A\widehat A,
    \]
    where $\widehat C$ is the derived completion of $C$ with respect to $IC$. Also using \ref{L:9.4b} we find that
    \[
        \R \Gamma (A, \widehat {\mls F})\simeq \R \Gamma (C, \widehat {\rH }).
    \]
    Replacing $C$ by $A$ we are then further reduced to the case when $C=A$.  In this case the result for $\rH _{\mls V }^\ani $ follows from \ref{L:9.4b}, and the result for $\rH _{\bG ^\ani }$ follows from \cite[5.6.6]{CS}.
\end{proof}

\begin{lem}\label{L:8.9} 
For $\mls F\in\Shvf(\Spec(A)^{\ani})$ the square
\begin{equation}\label{E:5.9.1}
    \xymatrix{
        \R\Gamma (A, \mls F)\ar[r]\ar[d]& \R\Gamma (\widehat A, {\mls F})\ar[d]\\
        \holim_{f\in I}\R\Gamma (A[1/f], \mls F)\ar[r]& \holim_{f\in I}\R\Gamma (\widehat A[1/f], \mls F)}
\end{equation}
is homotopy cartesian.
\end{lem}
\begin{proof}
As in the proof of \ref{L:8.8} if $\mls F = w_*^\ani \rH$ for a finite morphism $\Sp (C)\rightarrow \Sp (A)$ then the diagram \eqref{E:5.9.1} for $\mls F$ is canonically identified with the corresponding diagram for $\rH$ over $C$.  From this it follows that it suffices to prove the lemma for $\mls F = \rH _{\mls V }^\ani $ for a perfect complex $\mls V $ of $A$-modules and $\mls F = \rH _{\bG ^\ani }$ for a commutative finite locally free group scheme $\bG /A$. Furthermore, considering the homotopy fibers of the vertical maps, it suffices to show that the natural map on local cohomology
\begin{equation}\label{E:5.9.1b}
    \R\Gamma _I(A, \mls F)\rightarrow \R\Gamma _I(\widehat A, \mls F)
\end{equation}
is an equivalence. When $\mls F=\rH_{\mls V }^{\ani}$, the statement that \eqref{E:5.9.1b} is an equivalence is the statement that local cohomology is the same for a ring and its completion (see \cite[\href{https://stacks.math.columbia.edu/tag/0ALZ}{Tag 0ALZ}]{stacks-project}). In the case when $\mls F=\rH_{\bG^{\ani}}$, this follows from \cite[5.4.4]{CS}.
\end{proof}

\begin{pg}\label{P:Scomplexdef}{\bf The $\mls S$-complex.}
We continue with the notation of \ref{P:5.2}. For an object $\mls F\in \Shvf(X^{\ani})$ define 
\[
    \mls S_{(X,Z)}(\mls F):= \mathrm{Cocone}(\mls F\rightarrow \widehat {\mls F})\in\Pro(\Shv(X^{\ani})).
\]
If we think $Z$ is clear from context we may write simply $\mls S(\mls F)$ for $\mls S_{(X, Z)}(\mls F)$.
\end{pg}

\begin{thm}\label{T:5.8} If $g\colon \mls F\rightarrow \mls F'$ is a morphism in $\Shvf(X^{\ani})$ whose restriction to $X\setminus Z$ is an isomorphism, then the induced map
\[
    \mls S(g)\colon\mls S(\mls F)\rightarrow \mls S(\mls F')
\]
in $\Pro(\Shv(X^{\ani}))$ is an isomorphism.
\end{thm}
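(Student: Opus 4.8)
The plan is to leverage the exactness of the completion construction in order to reduce the statement to a vanishing assertion for objects supported on $Z$. First I would record that the functor $\mls F\mapsto \widehat{\mls F}=\mathrm{holim}_n\, i_{n*}^\ani\mls F_n$ is exact: restriction to $Z_n$, the pushforward $i_{n*}^\ani$, and the homotopy limit over $n$ are each exact functors of stable $\infty$-categories, so $\mls S=\mathrm{Cocone}(\mathrm{id}\to\widehat{(\cdot)})$ is exact. Consequently $\mls S(g)$ is an equivalence if and only if $\mls S(\mls C)\simeq 0$, where $\mls C:=\mathrm{Cone}(g)\in\mls D^{lf}(X)$. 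Since restriction to $U$ is exact and $g|_U$ is an equivalence, $\mls C|_U=0$. Thus the theorem reduces to the claim that the completion map is an equivalence for every object supported on $Z$:
\[
\mls C\to\widehat{\mls C}\quad\text{is an equivalence whenever }\mls C\in\mls D^{lf}(X)\text{ satisfies }\mls C|_U=0.
\]

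This may be checked affine-locally on $X$, so I would take $X=\Sp(A)$ with $Z=V(I)$, $I=(a_1,\dots,a_r)$. Using the preceding lemma (extended from its generators to all of $\mls D^{lf}(X)$ by exactness, since both $\widehat{(\cdot)}$ and $\widehat{(\cdot)}'$ are exact) I would replace $\widehat{\mls C}$ by the derived-reduction model, so that on a test algebra $B$ the map becomes $\mls C(B)\to\mathrm{holim}_n\mls C(A_n^\ani\otimes^{\mathbf L}_A B)$; here the transition maps of the Koszul pro-system $\{A_n^\ani\}$ kill all higher homotopy, a fact already isolated in the proof of that lemma. Geometrically $\Sp(A_n^\ani\otimes^{\mathbf L}_A B)$ is the derived thickening of $Z$ inside $\Sp(B)$, so the assertion is a formal-completeness statement. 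To prove it I would perform a dévissage along the generators of $\ker\!\big(\mls D^{lf}(X)\to\mls D^{lf}(U)\big)$, viewed as the stable subcategory generated by the manifestly $Z$-supported generators: perfect complexes supported on $Z$, and pushforwards $(i')_*^\ani H_{G_{Z',\ani}}$ with $Z'\subseteq Z$. For a perfect complex supported on $Z$ each cohomology sheaf is $I^N$-power torsion for some $N$, hence already lives on the thickening $Z_N$, and the vanishing of the higher Koszul transition maps forces the pro-system to recover $\mls C$. For a generator $(i')_*^\ani H_{G_{Z',\ani}}$ with $Z'\subseteq Z$, restriction to $Z_n$ followed by $i_{n*}^\ani$ returns the same object for every $n$, since $G_{Z'}$ already lives on $Z'\subseteq Z_n$; combined with the continuity and base-change properties of $H_{G_\ani}$ from \cite{CS} this yields $\widehat{\mls C}\simeq\mls C$ directly. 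Reassembling via the exactness of $\mls S$ then gives $\mls S(\mls C)\simeq 0$.

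The hard part will be the dévissage step. One must know that the kernel of restriction to $U$ is generated, as a stable subcategory of $\mls D^{lf}(X)$, by the $Z$-supported generators (a localization-theorem input), and, more delicately, one must control the full derived completion of the animated group-scheme generators rather than merely their $\pi_0$. The coherent case is the classical theorem on formal functions, but the group-scheme case genuinely requires the comparison lemma together with the finiteness and continuity results of \cite{CS}; managing the interaction of the pro-object structure with $\mathrm{Cocone}$ inside $\mls D_\dag(X)$ is where the technical care is concentrated.
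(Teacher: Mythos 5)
There is a genuine gap, and it sits exactly where you flagged it: the d\'evissage step. Your reduction of the theorem to the claim that $\mls C\to \widehat{\mls C}$ is an equivalence whenever $\mls C = \mathrm{Cone}(g)$ satisfies $\mls C|_U=0$ is correct (completion and $\mls S$ are exact, so this is an equivalent reformulation). But your proof of that claim requires knowing that $\ker\bigl(\mls D^{lf}(X)\to \mls D^{lf}(U)\bigr)$ is generated, as a stable subcategory, by perfect complexes supported on $Z$ and pushforwards $i'^{\ani}_*H_{G_{Z'}}$ with $Z'\subseteq Z$. No such localization theorem is available here, and it is not plausible as an input: $\mls D^{lf}(X)$ is an ad hoc thick subcategory, not a compactly generated category of all sheaves where a Thomason-type argument applies. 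The decisive test case is a morphism of finite flat group schemes $\rho:G\to G'$ over $X$ that is an isomorphism over $U$ but degenerates along $Z$ (e.g.\ the $\mmu_p\rightsquigarrow\aalpha_p$ degenerations from the introduction); $\mathrm{Cone}(H_{G}\to H_{G'})$ vanishes over $U$, yet there is no visible presentation of it in terms of $Z$-supported generators, and the paper's later analysis of precisely these cones (\ref{P:8.8}, via B\'egueri resolutions) is proved \emph{using} \ref{T:5.8}, so obtaining such a structure result first would be circular. A secondary slip: for a generator with $Z'\subseteq Z$, restriction to $Z_n$ followed by $i_{n*}^{\ani}$ does \emph{not} return the same object on the nose, because the relevant evaluation involves the \emph{derived} intersection $Z_n\times_X^{\mathbf L}Z'$, which carries higher Tor terms even though $Z'\subseteq Z_n$ classically; one only gets a pro-equivalence via the Koszul vanishing plus the cotangent-complex control of $H_G$ from \cite{CS}, so even the easy half of your d\'evissage needs more than stated.

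The paper's actual proof avoids any support-generation statement by proving something for \emph{arbitrary} generators: after localizing to $X=\Sp(A)$ strictly henselian with $I=(a_1,\dots,a_r)$, it identifies $R\Gamma(A,\widehat{\mls F})\simeq R\Gamma(\widehat A,\mls F)$ for $\widehat A$ the derived $I$-completion (standard for perfect complexes, \cite[5.6.6]{CS} for group schemes), and then shows the square comparing $R\Gamma$ over $A$, $\widehat A$, and the limits over $A[1/f]$, $\widehat A[1/f]$ for $f\in I$ is homotopy cartesian --- equivalently, that local cohomology is insensitive to completion, $R\Gamma_I(A,\mls F)\simeq R\Gamma_I(\widehat A,\mls F)$ (\cite[\href{https://stacks.math.columbia.edu/tag/0ALZ}{Tag 0ALZ}]{stacks-project} for perfect complexes, \cite[5.4.4]{CS} for finite flat group schemes). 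This Beauville--Laszlo-type glueing expresses $\mls S(\mls F)$ purely in terms of $\mls F|_U$, since $\Sp(A[1/f])$ and $\Sp(\widehat A[1/f])$ map into $U$; the theorem then follows for any $g$ that is an equivalence over $U$, with no need to decompose the cone. If you want to salvage your outline, replace the d\'evissage by this functoriality-in-$\mls F|_U$ argument for each generator.
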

\begin{proof}

The result is local on $X$, so it suffices to show that, in the case when $X = \Spec(A)$ is the spectrum of a strictly henselian local ring and $Z$ is defined by
\[
    I = (a_1, \dots, a_r)\subset A,
\]
the natural map
\[
    \R\Gamma (A, \mls S(\mls F))\rightarrow \R\Gamma (A, \mls S(\mls F'))
\]
is an isomorphism.   This follows from \ref{L:8.8} and \ref{L:8.9} which give a description of both sides in terms of the restrictions to $X\setminus Z$.
\end{proof}

\begin{cor}\label{C:keystatement} Let $f\colon X\rightarrow Y$ be a finite morphism of locally noetherian algebraic spaces, and let $Z_Y\hookrightarrow Y$ be a closed subspace with preimage $Z_X\hookrightarrow X$.  Assume that the induced map $X\setminus Z_X\rightarrow Y\setminus Z_Y$ is an isomorphism.  Let $\mls F_Y\in \Shvf(Y^\ani )$ be an object with restriction $\mls F_X\in \Shvf(X^\ani )$.  Let $\widehat {\mls F}_Y$ (resp. $\widehat {\mls F}_X$) be the object defined in \eqref{E:prodef} using $Z_Y\hookrightarrow Y$ (resp. $Z_X\hookrightarrow X$).  Then the square
$$
\xymatrix{
\mls F_Y\ar[r]\ar[d]& f_*^\ani \mls F_X\ar[d]\\
\widehat {\mls F}_Y\ar[r]& f_*^\ani \widehat {\mls F}_X}
$$
is a pushout square.
\end{cor}
\begin{proof}
Since $f_*$ is a right adjoint, and therefore commutes with homotopy limits, we have from the definition of $\widehat {\mls F}_X$ that $f_*^\ani \widehat {\mls F}_X$ is isomorphic to the completion in the sense of \eqref{E:prodef} of the object $f_*\mls F_X$ on $Y$.  Therefore the cocone of the map $f_*^\ani \mls F_X\rightarrow f_*^\ani \widehat {\mls F}_X$ is identified with $\mls S_{(Y, Z)}(f_*^\ani \mls F_X)$.
The result therefore follows from \ref{T:5.8}.
\end{proof}

\section{Generic representability results for flat cohomology}\label{S:section8.5}

In this section we prove our main results on the generic representability of flat cohomology. In particular, we will prove Theorem \ref{T:1.3} from the introduction. We will deduce this from the following result for animated cohomology.

\begin{thm}\label{T:8.1}
    Let $f\colon X\to S$ be a projective morphism of noetherian schemes of characteristic $p$ with $S$ reduced. For an object $\mls F\in\Shvf(X^{\ani})$ there exists
    a dominant flat quasi-finite morphism $V\to S$ of schemes such that $(f^{\ani}_*\mls F)|_V$ is in $\Shvf(V^{\ani})$.
\end{thm}

Before giving the proof we explain how Theorem \ref{T:8.1} implies Theorem \ref{T:1.3}.

\begin{lem}\label{lem:deduction}
    If $S$ is a reduced noetherian scheme and $\mls F\in\Shvf(S^{\ani})$ is an object with associated ordinary sheaf $\mls F_{\ord}:=\res(\mls F)\in\Shv(S)$, then there exists a dense open $U\subset S$ such that the restriction $(\mls F_{\ord})|_U\in\Shv(U)$ is stably algebraic and finitely presented over $U$.
\end{lem}
\begin{proof}
    It suffices to consider the cases when $\mls F=w^{\ani}_*\rH$ for a finite morphism $w\colon W\rightarrow S$ and $\rH$ equal to either $\rH_{\mls V}^{\ani}$ for a perfect complex $\mls V$ over $W$ or to $\rH_{\bG^{\ani}}$ for a commutative finite locally free group scheme $\bG$ over $W$. Shrinking on $S$, and using that $S$ is reduced, we may further assume that $w$ is finite and flat. Now the case of a perfect complex $\mls V$ is immediate from \ref{C:6.24}. For the case of $\rH = \rH _{\bG ^\ani }$ consider the B\'{e}gueri resolution \ref{pg:begueri resolution} of $\bG$. Since $w$ is finite flat and the group schemes $\bB^0$ and $\bB^1$ are smooth, the derived pushforward $\R w_*\bG$ for the flat topology is represented by the complex $\left[w_*\bB ^0\rightarrow w_*\bB ^1\right]$, and so by \ref{lem:animated pushforward and ordinary pushforward} we have quasi-isomorphisms
    \[
        (w^{\ani}_*\rH_{\bG^{\ani}})_{\ord}\simeq\rH_{\R w_*\bG}\simeq\left[\rH_{w_*\bB^0}\to\rH_{w_*\bB^1}\right]
    \]
    in $\Shv(S)$. For $i=0,1$ the pushforward $w_*\bB^i$ is represented by a smooth affine group scheme of finite presentation over $S$ \cite[4.4]{MR1321819}, so this gives the result by \ref{C:6.24}.
\end{proof}

\begin{prop}\label{P:9.1implies}
    Theorem \ref{T:8.1} implies Theorem \ref{T:1.3}.
\end{prop}
\begin{proof}
    Let $\bG$ be a commutative finite locally free group scheme on $X$. We claim that there exists a dense open subscheme $U\subset S$ such that $(\R f_*\bG)|_U$ is stably algebraic and finitely presented over $U$. Theorem \ref{T:1.3} will then follow by applying \ref{cor:generic representability of stably alg complex}. By replacing $S$ with a dense open we may assume that $f$ is flat. By \ref{T:8.1} there exists a dominant flat quasi-finite morphism $V\to S$ of schemes such that the pullback $(f^{\ani}_*\rH_{\bG^{\ani}})|_V$ is in $\Shvf(V^{\ani})$. By \ref{lem:deduction} there exists a dense open subscheme $V'\subset V$ such that the associated ordinary sheaf
    \[
    ((f^{\ani}_*\rH_{\bG^{\ani}})|_{V'})_{\ord}\simeq((f^{\ani}_*\rH_{\bG^{\ani}})_{\ord})|_{V'}\simeq(\R f_*\bG)|_{V'}
    \]
    (using the flatness of $f$ and $V'\to S$ and \ref{lem:animated pushforward and ordinary pushforward}) is stably algebraic and finitely presented over $V'$. Let $U\subset S$ be the image of $V'$. Then $U$ is open and dense in $S$ and $V'\to U$ is an fppf cover, so by \ref{cor:stable infty category} we have that $(\R f_*\bG)|_U$ is stably algebraic and of finite presentation over $U$.
\end{proof}

\begin{pg}
In the remainder of this section we will prove Theorem \ref{T:8.1}. Our argument is a somewhat elaborate induction on the relative dimension of $f$, with inductive step involving the formation of suitably chosen alterations together with a simultaneous d\'evissage of the coefficient sheaf, with the end result of reducing to the special case when $f$ is smooth and projective and $\mls F=\rH_{\bG^{\ani}}$ for $\bG=\bmu _p$, $\balpha_p$, or $\mathbf{Z}/q$ for a prime $q$. Before starting the main part of the proof we collect together various auxiliary results that will be used.
\end{pg}

\begin{pg}\label{pg:derived blowups, local description}{\bf Derived blowups and flat cohomology.}
Let $A$ be a ring and let $a_1,\dots,a_r\in A$ be elements.
As in \cite{KST} (see also \cite{KR}) we can then define the \emph{derived blowup} 
\begin{equation}\label{E:derivedblowup}
    \pi^{\ani}\colon\Bl^\ani\rightarrow \Spec (A)
\end{equation}
by taking the fiber product, in the derived sense, of the diagram
\[
    \begin{tikzcd}
        &\Spec A\arrow{d}{(a_1,\dots,a_r)}\\
        \Bl_{0}\arrow{r}&\bA^r
    \end{tikzcd}
\]
where the bottom horizontal arrow is the ordinary blowup $\Bl_0$ of the ideal sheaf $(X_1,\dots,X_r)\subset\bZ[X_1,\dots,X_r]$ of the origin in $\bA^r=\bA^r_{\bZ}$. Rather than introducing an additional layer of derived schemes, we will work with this derived blowup in a somewhat ad hoc manner. Namely, fix a finite covering of $\Bl_0$ by affines (for example, the one induced by the restriction of the standard affine cover of projective space), so we get an fppf hypercover
\[
    U_\bullet \rightarrow \Bl_0
\]
with each $U_n$ affine.  We will, abusively, write simply $\Bl_0$ for the corresponding cosimplicial animated ring, and $\Bl^\ani$ for the  simplicial object of $\Alg _A^{\ani, \op }$ obtained by base change. Taking the coskeleton of the morphism 
\[
    \Bl^\ani\rightarrow \Spec(A)
\]
in the $\infty $-category of simplicial objects of $\Alg _A^{\ani , \op }$ we obtain an object 
\begin{equation}\label{eq:coskel blow up}
    B^{\ani}_\bullet \rightarrow \Spec (A),
\end{equation}
which formally is a bisimplicial object in $\Alg _A^{\ani, \op}$ though it most naturally should be viewed as a simplicial derived scheme.
\end{pg}

\begin{example}
Let $A$ be a ring with elements $a_1, \dots, a_r\in A$.   The derived blowup $\Bl ^\ani $ has an underlying scheme $\pi _0(\Bl ^\ani )$.  This is the scheme obtained by gluing the underlying schemes of $U_0$.  That is, if $\Bl _0 = \cup _i\Sp (B_i)$ is the open cover used in the construction then $\pi _0(\Bl ^\ani )$ is the scheme obtained by gluing from the derived tensor products $B_i\lotimes _{\mathbf{Z}[X_1, \dots, X_r]}A$.
In general it is not the case that the underlying scheme $\pi_0(\Bl^\ani)$ of the derived blowup $\Bl^\ani\rightarrow \Spec(A)$ is equal to the ordinary blowup $\Bl_I$ of $\Spec(A)$ at the ideal $I=(a_1,\dots,a_r)$. For an explicit example consider $A = k[t]$, $r=2$, and $a_1 = a_2 = t$. In this case the classical blowup of $I$ is just $\Spec(A)$ again, but $\pi _0(\Bl^\ani)$ has an extra component.  In general there is a closed immersion
\[
    \Bl_I\hookrightarrow \pi _0(\Bl^\ani)
\]
which is an isomorphism away from the closed subscheme of $\Spec(A)$ defined by $I$ \cite[4.1.11]{KR}.
\end{example}

\begin{pg}\label{P:7.7} We will also consider a more global version of the derived blowup construction.  Let $X$ be a scheme and let $\{a_i\colon \mls L_i\rightarrow \mls O_X\}_{i=1}^r$ be a collection of invertible sheaves with $\mls O_X$-linear maps to $\mls O_X$. Giving such a collection of invertible sheaves is equivalent to giving a morphism
\[
    X\rightarrow [\mathbf{A}^1/\mathbf{G}_m]^{\times r}\simeq [\mathbf{A}^r/\mathbf{G}_m^r].
\]
Since the origin in $\mathbf{A}^r$ is $\mathbf{G}_m^r$-equivariant the $\mathbf{G}_m^r$-action on $\mathbf{A}^r$ lifts to the blowup $\Bl_{0}\to\bA^r$ of the origin, and we can consider the induced diagram 
\begin{equation}\label{E:7.7.1}
    \begin{tikzcd}
        &X\arrow{d}\\
        \left[\Bl_0/\bG^r_m\right]\arrow{r}&\left[\bA^r/\bG^r_m\right].
    \end{tikzcd}
\end{equation}
The corresponding derived blowup
\[
    \pi^{\ani}\colon\Bl^{\ani}\to X
\]
is the fiber product of this diagram (in the sense of derived algebraic geometry). Locally, when the $\mls L_i$ are trivialized, this agrees with the preceding construction. Let $Z\subset X$ be the closed subscheme defined by the ideal generated by the images of the maps $a_i\colon \mls L_i\to\mls O_X$, let $U=X\setminus Z$ be the complement of $Z$ in $X$, and let
\[
    \pi\colon \Bl\to X
\]
be the ordinary blowup of $X$ along $Z$. The restriction of the map $\pi^{\ani}\colon\Bl^{\ani}\rightarrow X$ to $U$ is an isomorphism, and the closure of the resulting inclusion $U\hookrightarrow \pi _0(\Bl^{\ani})$ is the ordinary blowup $\Bl$ of $X$ along $Z$. Thus, the ordinary and derived blowups are related by a diagram
\begin{equation}\label{eq:derived blowup and ordinary blowup}
    \begin{tikzcd}
        \Bl\arrow{dr}[swap]{\pi}\arrow[hook]{r}{i}&\Bl^{\ani}\arrow{d}{\pi^{\ani}}\\
        &X.
    \end{tikzcd}
\end{equation}
\end{pg}

\begin{lem}\label{L:9.7} Suppose that $X$ is noetherian. The underlying scheme $\pi_0(\Bl^{\ani})$ of the derived blowup $\Bl^{\ani}$ can be written as a union
\begin{equation}\label{E:decomp}
    \pi _0(\Bl^{\ani}) = \Bl\cup \mathbf{P}_Z\cup T,
\end{equation}
where $\Bl$ is the ordinary blowup of $X$ along $Z$, $\mathbf{P}_Z$ is a projective bundle over $Z$, and $T$ has dimension $<\mathrm{dim}(X)$.
\end{lem}
\begin{proof}
We proceed by induction on $r$. For $r=1$ the result is immediate since the blowup map is an isomorphism. For the inductive step, assume the result holds for derived blowups associated to fewer than $r$ line bundles and maps.
As above we have an inclusion $\Bl\hookrightarrow\Bl^{\ani}$ which accounts for the first factor in \eqref{E:decomp}.  Furthermore, it is clear from the construction of $\Bl^{\ani}$ that the restriction of $\pi _0(\Bl^{\ani})$ to $Z$ is isomorphic to the projective bundle $\mathbf{P}_Z$ associated to the locally free sheaf $\mls L_1|_Z\oplus \cdots \oplus \mls L_r|_Z$, accounting for the second factor. For $i=1, \dots, r$ let $X_i\subset X$ be the closed subscheme defined by the vanishing of the image of $a_i\colon \mls L_i\rightarrow \mls O_X$.  In local coordinates when $X_i$ is affine and the $\mls L_i$ are trivial we can described the restriction of $\pi _0(\Bl^{\ani})$ to $X_i$ as follows.  Write $X_i = \Spec(R_i)$ and let $f_j\in R_i$ be a generator for the image of $a_j$.  Then 
\[
    \pi _0(\Bl^{\ani})|_{X_i}\simeq \mathrm{Proj}_{R_i}(R_i[u_1, \dots, u_r]/(u_if_j, u_jf_s-u_sf_j)_{j, s\neq i}).
\]
The closed subscheme of this scheme defined by $u_i = 0$ is equal to the underlying scheme of the derived blowup of $X_i$ associated to the collection $\{\mls L_j\rightarrow \mls O_{X_i}\}_{j\neq i}$, and this description shows that $\pi _0(\Bl^{\ani})$ is the union of this derived blowup and $\mathbf{P}_Z$. From this and induction we obtain the lemma.
\end{proof}


\begin{pg}\label{pg:derived blowups, again}
Define $Z^{\ani}$ and $E^{\ani}$ to be the derived fiber products in the diagrams
\[
    \begin{tikzcd}
        Z^{\ani}\arrow{d}\arrow{r}&X\arrow{d}\\
        \bB\bG_m^r\arrow[hook]{r}&\left[\bA^r/\bG^r_m\right]
    \end{tikzcd}
\]
and
\[
    \begin{tikzcd}
        E^{\ani}\arrow{d}\arrow{r}&\Bl^{\ani}\arrow{d}\\
        Z^{\ani}\arrow{r}&X.
    \end{tikzcd}
\]
These have the following local descriptions. Suppose that $X = \Spec(A)$ is affine and the $\mls L_i$ are trivialized, so we can view the $a_i$ as elements of $A$. In this case the morphism $X\rightarrow [\mathbf{A}^r/\mathbf{G}_m^r]$ factors through a morphism
\[
    \Spec(A)\rightarrow \mathbf{A}^r,
\]
and $Z^\ani $ is given by the derived tensor product (in the sense of animated rings)
\[
    A_Z^\ani := A\otimes ^{\mathbf{L}} _{k[X_1, \dots, X_r], X_i\mapsto 0}k.
\]
Furthermore, if
\[
    \Spec(B_\bullet )\rightarrow \mathrm{Bl}_0
\]
is an \'etale hypercover of the blowup of $\mathbf{A}^r$ at the origin, then $\Bl^{\ani}$ is described by the cosimplicial animated ring
\[
    A^{\prime }_\bullet := B_\bullet \otimes ^{\mathbf{L}} _{k[X_1, \dots, X_r]}A,
\]
and if $B_\bullet \rightarrow C_\bullet $ denotes the quotient given by the exceptional divisor then $E^\ani $ is given by
\[
    D_\bullet ^\ani := C_\bullet \otimes ^{\mathbf{L}} _{k[X_1, \dots, X_r]}A.
\]
\end{pg}

\begin{lem}\label{lem:derived blow up pushout}
    Let $\bG$ be a commutative group scheme over $X$ which is either smooth or is finite locally free and set $\mls F=\rH_{\bG^{\ani}}$. The induced diagram in $\mls D(\Ab)$
\begin{equation}\label{E:8.4.1}
    \begin{tikzcd}
        \R\Gamma(X,\mls F)\arrow{r}\arrow{d}&\R\Gamma(Z^{\ani},\mls F|_{Z^{\ani}})\arrow{d}\\
        \R\Gamma(\Bl^{\ani},\mls F|_{\Bl^{\ani}})\arrow{r}&\R\Gamma(E^{\ani},\mls F|_{E^{\ani}})
    \end{tikzcd}
\end{equation}
is a homotopy pushout diagram.
\end{lem}
\begin{proof}
    The assertion is local on $X$, so we may assume that $X=\Spec A$ is affine and the $\mls L_i$ are trivialized. Adopting the notation of \ref{pg:derived blowups, again}, we now observe that the diagram
\[
    \xymatrix{
    k[X_1, \dots, X_r]\ar[d]\ar[r]^-{X_i\mapsto 0}&k\ar[d]\\
    B_\bullet \ar[r]& C_\bullet }
\]
is homotopy cartesian and therefore the same is true for the diagram
\[
    \xymatrix{
    A\ar[d]\ar[r]& A_Z^\ani \ar[d]\\
    A'_\bullet \ar[r]& D_\bullet ^\ani .}
\]
Indeed the sequence of $k[X_1, \dots, X_r]$-modules
\[
    k[X_1, \dots, X_r]\rightarrow B_\bullet \oplus k\rightarrow C_\bullet \rightarrow k[X_1, \dots, X_r][1]
\]
is a distinguished triangle which, upon derived tensor product with $A$, induces a distinguished triangle
\[
    A\rightarrow A_Z^\ani \oplus A'_\bullet \rightarrow D_\bullet ^\ani \rightarrow A[1].
\]
It follows from this that for any affine flat group scheme $\bG$ over $A$ the induced diagram
\[
    \xymatrix{
        \tau _{\leq 0}\R\Gamma (A, \bG)\ar[r]\ar[d]& \tau _{\leq 0}\R\Gamma (A_Z^\ani, \bG)\ar[d]\\
        \tau _{\leq 0}\R\Gamma (A'_\bullet, \bG)\ar[r]& \tau _{\leq 0}\R\Gamma (D_\bullet ^\ani, \bG)}
\]
is homotopy cartesian. Taking the homotopy limit over \'etale hypercovers of $A$ and $D_\bullet ^\ani $ and  applying \cite[5.2.9]{CS} we obtain that for a smooth $\bG$ the diagram \eqref{E:8.4.1} is a homotopy pushout diagram. The case of a finite locally free group scheme then follows from this using the B\'egueri resolution.
\end{proof}


\begin{pg}{\bf Preparations for the proof of \ref{T:8.1}.} 
    \end{pg}

    We prove a series of reduction steps and special cases to be used in the proof of \ref{T:8.1}. Let $f\colon X\to S$ be a proper morphism of noetherian schemes with $S$ reduced. For a sheaf $\mls F\in\Shv(X^{\ani})$, we will say that \emph{the conclusion of \ref{T:8.1} holds for $(f\colon X\to S,\mls F)$} if there exists a dominant flat quasi-finite morphism $V\to S$ of schemes such that $(f^{\ani}_*\mls F)|_V$ is in $\Shvf(V^{\ani})$. 
    If we think that the morphism $f$ is clear from context, we may write simply $(X,\mls F)$ for $(f\colon X\to S,\mls F)$. Given a morphism $Y\to X$, we write $\mls F_Y:=\mls F|_Y$ for the restriction of $\mls F$ to $Y$.

    \begin{lem}\label{lem:observation in reductions}
        Let $f\colon X\to S$ be a proper morphism of noetherian schemes with $S$ reduced. The full subcategory of $\Shv(X^{\ani})$ spanned by those objects $\mls F$ for which the conclusion of \ref{T:8.1} holds for $(X, \mls F)$ is a full stable $\infty$-subcategory of $\Shv(X^\ani )$.
    \end{lem}
    \begin{proof}
        Note that the statement refers to a subcategory of $\Shv(X^{\ani})$, not $\Shvf(X^{\ani})$. The claim follows by noting that for a scheme $Y$ the subcategory $\Shvf(Y^{\ani})\subset\Shv(Y^{\ani})$ is a full stable $\infty$-subcategory, and that any two dominant flat quasi-finite morphisms to $S$ may be refined by a third such morphism.
    \end{proof}

    \begin{lem}\label{lem:animated coho of complex}
        Let $f\colon X\to S$ be a proper morphism of noetherian schemes with $S$ reduced. If $\mls V^{}$ is a bounded complex of coherent sheaves on $X$, then the conclusion of \ref{T:8.1} holds for $\rH_{\mls V^{}}^{\ani}$.
    \end{lem}
    \begin{proof}
    Shrinking on $S$ we may assume that $f$ is flat and that $\mls V$ is an $S$-perfect complex \cite[\href{https://stacks.math.columbia.edu/tag/0DI0}{Tag 0DI0}]{stacks-project}, in which case the cohomology $\R f_*\mls V$ is a perfect complex on $S$ whose formation commutes with arbitrary base change \cite[\href{https://stacks.math.columbia.edu/tag/0CTM}{Tag 0CTM}]{stacks-project}. By \ref{lem:animated pushforward and ordinary pushforward} we have isomorphisms
     \[
        \res(f^{\ani}_*\rH^{\ani}_{\mls V})\simeq\R f_*(\res(\rH_{\mls V}))\simeq\R f_*\rH_{\mls V}\simeq\rH_{\R f_*\mls V}.
     \]
    By \ref{E:4.11} the sheaf $\rH_{\mls V}^{\ani}$ is continuous, so by \ref{L:3.15} the cohomology $f^{\ani}_*\rH_{\mls V}^{\ani}$ is also continuous, and we obtain isomorphisms
     \[
        f^{\ani}_*\rH^{\ani}_{\mls V}\simeq\left(\res(f^{\ani}_*\rH^{\ani}_{\mls V})\right)^{\ani}\simeq\left(\rH_{\R f_*\mls V}\right)^{\ani}\simeq\rH^{\ani}_{\R f_*\mls V},
     \]
    which implies in particular that $f^{\ani}_*\rH^{\ani}_{\mls V}\in \Shvf(S^{\ani}).$
    \end{proof}

    \begin{prop}\label{prop:main rep thm, animated version}
    Let $f\colon X\to S$ be a proper morphism of noetherian schemes of characteristic $p$ with $S$ reduced and let $\bG$ be a commutative finite flat group scheme over $X$. The conclusion of \ref{T:8.1} holds for $(X,\rH_{\bG^{\ani}})$ under any of the following sets of assumptions.
    \begin{enumerate}
        \item\label{item:main rep thm animated, etale ell} $f$ is smooth and $\bG$ is \'{e}tale.
        \item\label{item:main rep thm animated, coheight 1} $\bG$ has coheight 1.
        \item\label{item:main rep thm animated, height 1} $f$ is lci and $\bG$ has height 1.
\end{enumerate}
\end{prop}
\begin{proof}
    We will show that under any of these assumptions there exists a dense open subscheme $U\subset S$ such that $(f^{\ani}_*\rH_{\bG^{\ani}})|_U$ is in $\Shvf(U^{\ani})$ (note that this is in fact slightly stronger than the conclusion of \ref{T:8.1}). By shrinking on $S$ we may assume that $f$ is flat.
    
    We argue by reducing to the non-animated results of Theorems \ref{T:strongform} and \ref{T:A.2}. The result in case~\eqref{item:main rep thm animated, coheight 1} follows from \ref{lem:animated coho of complex} and the Artin--Mazur resolution. We consider case~\eqref{item:main rep thm animated, etale ell}. Suppose $\bG$ is \'{e}tale. For a ring $R$ we have the functor
    \[
        \pi_{0}^*\colon \mls P(\Alg_R^{\op})\to\mls P(\Alg_R^{\ani,\op})
    \]
    which sends a presheaf $\mls F$ to the presheaf $A\mapsto\mls F(\pi_0(A))$. This restricts to a functor on the subcategories of sheaves, and then globalizes to give a functor
    \[
        \pi_{0}^*\colon \Shv(X)\to\Shv(X^{\ani}).
    \]
    It follows from the fiber sequences~\eqref{E:sift} and Postnikov completeness~\eqref{eq:postnikov completeness} that this has the property that
    \[
        \rH_{\bG^{\ani}}\simeq\pi_0^*\left(\text{res}\left(\rH_{\bG^{\ani}}\right)\right)\simeq\pi^*_0\left(\rH_{\bG}\right)
    \]
    via the canonical map. Arguing as in the proof of Lemma \ref{L:3.15} the same is true for the cohomology $f^{\ani}_*\rH_{\bG^{\ani}}$. Thus using \ref{lem:animated pushforward and ordinary pushforward} we obtain isomorphisms
    \[
        f^{\ani}_*\rH_{\bG^{\ani}}\simeq\pi_0^*\left(\text{res}\left(f^{\ani}_*\rH_{\bG^{\ani}}\right)\right)\simeq\pi_0^*\left(\R f_*\rH_{\bG}\right)\simeq\pi^*_0\left(\rH_{\R f_*\bG}\right).
    \]
    Using case~\eqref{item:main rep thm animated, coheight 1} and arguing as in the proof of \ref{T:strongform}~\eqref{item:coho etale ell}, we may assume that $\bG$ has constant order coprime to $p$. Arguing as before we deduce that the cohomology $\R f_*\bG$ is in the stable $\infty$-subcategory of $\mls D(S)$ generated by finite \'{e}tale group schemes. Thus $f^{\ani}_*\rH_{\bG^{\ani}}$ is in the stable $\infty$-subcategory of $\Shv(S^{\ani})$ generated by the sheaves $\pi^*_0\left(\rH_{\bA}\right)$ for finite \'{e}tale group schemes $\bA$ on $S$. But as shown above, for any such $\bA$ we have $\pi^*_0\left(\rH_{\bA}\right)\simeq\rH_{\bA^{\ani}}$, which gives the result in case~\eqref{item:main rep thm animated, etale ell}.
    
    Finally, suppose that~\eqref{item:main rep thm animated, height 1} holds, so $\bG$ has height 1. By \ref{thm:coho is continuous} the functor $\rH_{\bG^{\ani}}$ is continuous, so by \ref{L:3.15} the cohomology $f^{\ani}_*\rH_{\bG^{\ani}}$ is also continuous. Applying \ref{lem:animated pushforward and ordinary pushforward} we obtain isomorphisms
    \[
        f^{\ani}_*\rH_{\bG^{\ani}}\simeq\left(\res(f^{\ani}_*\rH_{\bG^{\ani}})\right)^{\ani}\simeq\left(\R f_*\rH_{\bG}\right)^{\ani}\simeq\left(\rH_{\R f_*\bG}\right)^{\ani}.
    \]
    Consider the functor
    \[
        \sha\colon\Shv(S)\to\Shv(S^{\ani}),\hspace{1cm}\mls G\mapsto\sh\left(\mls G^{\ani}\right)
    \]
    that first animates and then sheafifies. As $f^{\ani}_*\rH_{\bG^{\ani}}$ is a sheaf, we have from the above that
    \[
        f^{\ani}_{*}\rH_{\bG^{\ani}}\simeq\sha\left(\rH_{\R f_*\bG}\right).
    \]
    By \ref{T:A.2} the ordinary flat cohomology $\rH_{\R f_*\bG}$ is contained in the full stable $\infty$-subcategory of $\Shv(S)$ generated by the sheaves $\rH_{\mls W}$ for perfect complexes $\mls W$ on $S$ and $\mls C_{(\mls W,\sigma)}$ for pairs $(\mls W,\sigma)$ on $S$. For a perfect complex $\mls W$ we have that $\rH_{\mls W}^{\ani}$ is already a sheaf, and thus we have
    \[
        \sha(\rH_{\mls W})=\sh\left((\rH_{\mls W})^{\ani}\right)\simeq\sh(\rH_{\mls W}^{\ani})\simeq\rH^{\ani}_{\mls W}.
    \]
    Similarly, for any pair $(\mls W,\sigma)$ we have by \ref{E:C functor, animated version} that 
    \[
        \sha\left(\mls C_{(\mls W,\sigma)}\right)\simeq\sh\left((\mls C_{(\mls W,\sigma)})^{\ani}\right)\simeq\sh\left(\mls C^{\ani}_{(\mls W,\sigma)}\right)\simeq\mls C^{\ani}_{(\mls W,\sigma)}.
    \]
    As $\sha$ is exact, this implies that $f^{\ani}_*\rH_{\bG^{\ani}}\simeq\sha\left(\rH_{\R f_*\bG}\right)$ is contained in the full stable $\infty$-subcategory of $\Shv(S^{\ani})$ generated by the sheaves $\rH_{\mls W}^{\ani}$ for perfect complexes $\mls W$ on $S$ and the sheaves $\mls C_{(\mls W,\sigma)}^{\ani}$ for pairs $(\mls W,\sigma)$ on $S$. As before, for any such $\mls W$ we may find a Zariski cover $S'\to S$ over which $\rH_{\mls W}^{\ani}$ is in the subcategory generated by the sheaves $\rH_{\bV^{\ani}}$ for vector groups $\bV$ on $S'$. Similarly, for a pair $(\mls W,\sigma)$ we may find a Zariski cover $S'\to S$ over which $\mls W$ is strictly perfect and $\sigma$ is induced by a map of strict complexes. Arguing as in \ref{ssec:X=S} and using Theorem \ref{thm:main kan theorem, over a stack}, we obtain that the restriction of $\mls C_{(\mls W,\sigma)}^{\ani}$ to $S'$ is in the subcategory generated by the sheaves $\rH_{\bA^{\ani}}$ for finite locally free height 1 group schemes $\bA$ on $S'$. Moreover, we may find a Zariski cover which has these properties simultaneously for any finite collection of such objects, so this gives the result in case~\eqref{item:main rep thm animated, height 1}.
  \end{proof}

Following \cite[5.1.11]{CS} let $S = \Sp (R)$ be an affine noetherian scheme and let $\mls F\in \Shv (S^\ani )$ be a sheaf. We can then consider the functor $T_{\mls F}$
on the $\infty $-category of animated $R$-modules taking values in animated abelian groups given by 
\[
    M\mapsto \text{Fib}(\mls F(R[M])\rightarrow \mls F(R)),
\]
where $R[M]$ denotes the animated ring of dual numbers on $M$ (see \cite[5.1.8]{CS}).  If $\mls F$ commutes with limits then by \cite[5.5.2.7]{LurieHTT} $T_{\mls F}$ is corepresented by an animated $R$-module $\rL \Omega ^1_{\mls F}$ (this is a generalization of the construction of the cotangent complex - hence the notation). Consider the following conditions on $\mls F$:
\begin{enumerate}
    \item[(a)] For every animated $R$-algebra $A$ with associated truncations $A_n := \tau _{\leq n}A$ the natural map
    \[
        \mls F(A)\rightarrow \holim_n\mls F(A_n)
    \]
    is an equivalence.
    \item[(b)] The functor $T_{\mls F}$ commutes with limits and $\rL \Omega ^1_{\mls F}$ is a perfect complex.
    \item[(c)] For every square-zero extension of animated $R$-algebras
    \[
        \xymatrix{
        A'\ar[r]\ar[d]& A\ar[d]^-i\\
        A\ar[r]^-s& A[M[1]]}
    \]
    in the sense of \cite[5.1.9]{CS} the induced diagram
    \[
        \xymatrix{
        \mls F(A')\ar[r]\ar[d]& \mls F(A)\ar[d]^-i\\
        \mls F (A)\ar[r]^-s& \mls F (A[M[1]])}
    \]
    is a fiber diagram.
\end{enumerate}

\begin{lem}\label{lem:001}
Let $S$ be a reduced noetherian scheme. For any object $\mls F\in \Shv ^\circ (S^\ani )$, there exists a 
dense affine open $\Sp (R)\subset S$ such that $\mls F|_{R}\in \Shv (\Sp (R)^\ani)$ satisfies conditions (a), (b), and (c).

\end{lem}
\begin{proof} 
    It suffices to consider the case when $\mls F = w^\ani _*\rH $ with $w\colon W\rightarrow S$ a finite morphism and $\rH$ equal to either $\rH _{\bG ^\ani _W}$ for a finite locally free group scheme $\bG _W$ over $W$ or $\rH ^\ani _{\mls V}$ for a perfect complex $\mls V$ on $W$. Shrinking on $S$ we may assume that $w$ is finite and flat, and hence reduce to the case when $W=S$. The result follows from \cite[5.2.6 and 5.2.8]{CS}.
\end{proof}

\begin{lem}\label{L:9.20b} Let $f\colon X\rightarrow S$ be a proper morphism of noetherian schemes with $S$ reduced. For any object $\mls F\in \Shv ^\circ (X^\ani )$, there exists a dense affine open $\Sp (R)\subset S$ such that the restriction $(f_*^\ani \mls F)_R\in \Shv (\Sp (R)^\ani )$ satisfies conditions (a), (b), and (c).
\end{lem}
\begin{proof}
    As in the previous proof it suffices to consider the case when $\mls F = \rH _{\bG ^\ani }$ for a finite locally free group scheme $\bG /X$ or $\rH ^\ani _{\mls V}$ for a perfect complex $\mls V$ on $X$.  Furthermore, by shrinking on $S$ we may assume that $f$ is flat and that $S = \Sp (R)$ is affine. Let $A$ be an animated $R$-algebra and let $\Sp (B_\bullet )$ be an fppf hypercover of $X$ by affine schemes.  Then (note that each $B_n$ is flat over $R$)
    \[
        (f_*^\ani \mls F)(A) = \holim _n\mls F(A\otimes _RB_n),
    \]
    and similarly for the truncations of $A$.  Condition (a) for $\mls F = \rH _{\bG ^\ani }$ therefore follows from \cite[5.2.6]{CS} and (a) for a perfect complex is immediate.  

    From this and the corresponding result in the affine case, which follows from \cite[5.2.8]{CS} in the case of a finite locally free group scheme and is immediate in the case of a perfect complex, we get (c).  Furthermore, this result in the affine case shows that for $\mls F = \rH _{\bG ^\ani }$ we have $T_{f_*^\ani \mls F}$ corepresented by the dual of the perfect complex $\R f_*\ell_{\bG/X}$, and that in the case of $\mls F = \rH ^\ani _{\mls V}$ the functor $T_{f_*^\ani \mls F}$ is corepresented by the dual of the perfect complex $\R f_*\mls V$.  It follows that condition (b) holds in both cases.
\end{proof}

\begin{lem}\label{L:9.19b}
    Let $S=\Spec R$ be an affine noetherian scheme, let $\mls F,\mls G\in\Shv(S^{\ani})$ be two sheaves each of which satisfies conditions (a), (b), and (c), and let $u\colon\mls F\to\mls G$ be a morphism. Then $u$ is an isomorphism if and only if $u$ restricts to an isomorphism on $\Alg_R$.
\end{lem}
\begin{proof}
    We first consider the map $T_u\colon T_{\mls F}\rightarrow T_{\mls G}$ induced by $u$, which by the Yoneda lemma is induced by a map $u^*\colon\rL\Omega ^1_{\mls G}\rightarrow \rL \Omega ^1_{\mls F}$. The dual of this map is the value of $T_u$ on the ring of dual numbers $R[\varepsilon]:=R[\varepsilon]/\varepsilon^2$.  Since this is an ordinary algebra our assumption that $u$ is an isomorphism on $\Alg_R$ implies that the dual of $u^*$ is an equivalence. By (b) the source and target of $u^*$ are both perfect complexes, so $u^*$ is also an equivalence.
    
    We now prove the result. To show that the value of $u$ on an animated $R$-algebra $A$ is an equivalence it suffices by (a) to consider the case when $A$ is $n$-truncated for some $n\geq 0$. We proceed by induction on $n$. The base case of $n=0$ follows from (b). For the induction step we will show that if $u$ is an equivalence on the truncation $A_n:=\tau_{\leq n}A$ then $u$ is also an equivalence on $A_{n+1}$. For this we note that given a square-zero extension as in (c) we have a morphism of fiber sequences
    \[
    \begin{tikzcd}
        \mls F(A')\arrow{d}{u_{A'}}\arrow{r}&\mls F(A)\arrow{d}{u_A}\arrow{r}&\R\Hom_R(\rL\Omega^1_{\mls F},M)\arrow{d}{u^*}\\
        \mls G(A')\arrow{r}&\mls G(A)\arrow{r}&\R\Hom_R(\rL\Omega^1_{\mls G},M).
    \end{tikzcd}
    \]
    Since $u^*$ is an equivalence we conclude that $u_A$ is an equivalence if and only if $u_{A'}$ is an equivalence. Applying this in particular to the square-zero extension $A_{n+1}\rightarrow A_n$ we conclude that if $\mls F(A_n)\rightarrow \mls G(A_n)$ is an equivalence then so is $\mls F(A_{n+1})\rightarrow \mls G(A_{n+1})$.
\end{proof}

    \begin{cor}\label{C:9.21}
        Let $f\colon X\rightarrow S=\Sp (R)$ be a proper morphism of noetherian schemes with $S$ reduced and affine, and let $\mls F\in \Shv ^\circ (X^\ani )$ be an object. Suppose that $f_*^\ani \mls F$ admits a morphism $f_*^\ani \mls F\rightarrow \mls U$ to an object $\mls U\in \Shv ^\circ (S^\ani )$ which induces an isomorphism when restricted to $\Alg _R$.  Then the conclusion of \eqref{T:8.1} holds for $(X, \mls F)$.
    \end{cor}
    \begin{proof}
        This follows from combining Lemmas \ref{L:9.20b} and \ref{L:9.19b}.
    \end{proof}

\begin{lem}\label{lem:reduction reduction}
    Let $f\colon X\to S$ be a proper morphism of noetherian schemes with $S$ reduced. Let $\bG$ be a commutative finite flat group scheme over $X$ and set $\mls G:=\rH_{\bG^{\ani}}$. The conclusion of \ref{T:8.1} holds for $(X,\mls G)$ if and only if it holds for $(X_{\red},\mls G|_{X_{\red}})$, where $X_{\red}\subset X$ is the reduction.  
\end{lem}
\begin{proof}
    Let $\mls I\subset \mls O_X$ be the ideal sheaf of nilpotent elements. For $n\geq 1$ set $X_n:=V(\mls I^{n})$, so that we have a sequence of closed subschemes
    \[
        X_{\red} = X_1\subset X_2\subset \cdots \subset X_N = X,
    \]
    and let $a_n\colon X_{n}\hookrightarrow X$ denote the inclusion. Set $\bG_n:=\bG|_{X_n}$ and $\mls G_n:=\rH_{\bG_n^{\ani}}$. To prove the lemma it suffices by \ref{lem:observation in reductions} to show that the conclusion of \ref{T:8.1} holds for the cocone of the natural map $\mls G\to a^{\ani}_{1*}\mls G_1$.
   For this we proceed by induction on $n\geq 1$ such that $\mls I^n=0$. The base case of $n=1$ is trivial, and the induction step follows from \cite[5.2.8]{CS}, which gives a fiber sequence
    \[
       a_{n+1*}^{\ani}\mls G_{n+1}\rightarrow a_{n*}^\ani\mls G_n\rightarrow \rH^{\ani}_{\mls E}
    \]
    where $\mls E:=\RHom (\ell_{\bG/X}, \mls I^n/\mls I^{n+1})$, and noting that by \ref{lem:animated coho of complex} the conclusion of \ref{T:8.1} holds for $\rH^{\ani}_{\mls E}$.
\end{proof}

\begin{prop}\label{prop:reduced subscheme lemma}
Fix a reduced noetherian scheme $S$. Then the conclusion of \ref{T:8.1} holds for all pairs $(f\colon X\to S,\mls F)$ with $f\colon X\to S$ proper and $\mls F\in\Shvf(X^{\ani})$ if and only if the conclusion of \ref{T:8.1} holds for all such pairs with $X$ furthermore assumed reduced.
\end{prop}
\begin{proof}
    The ``only if'' direction is immediate. For the ``if'' direction assume that the conclusion of \ref{T:8.1} holds for all pairs $(Y, \mls G)$ with $Y$ reduced and proper over $S$ and $\mls G\in\Shvf(Y^{\ani})$. Let $X$ be a noetherian scheme (possibly non reduced) and let $f\colon X\to S$ be a proper morphism. By \ref{lem:observation in reductions} to prove that the conclusion of \ref{T:8.1} holds for $(X,\mls F)$ for all $\mls F\in \Shvf(X^{\ani})$ it suffices to consider the case when $\mls F = w_*^\ani \rH$, with $w\colon W\rightarrow X$ a finite morphism and $\rH$ equal to either $\rH_{\mls V}^\ani $ for a perfect complex $\mls V$ on $W$ or to $\rH _{\bG^\ani}$ for a finite flat group scheme $\bG$ over $W$. Shrinking on $S$ if necessary so that $W$ is flat over $S$ and replacing $X$ in the statement by $W$ we are further reduced to the case when $W = X$. In this case the statement in the case of a perfect complex follows from \ref{lem:animated coho of complex}, while in the latter case the statement follows from \ref{lem:reduction reduction}.
\end{proof}

\begin{prop}\label{P:9.5b} Let $f\colon X\rightarrow S$ be a proper morphism of noetherian schemes with $X$ and $S$ reduced and let $\bG_X$ be a finite flat group scheme over $X$. Let 
\[
    X = X_1\cup X_2
\]
be a decomposition of $X$ into the union of two reduced closed subschemes such that there exists an open subset $U\subset X$ for which the intersections $U\cap X_i\subset X_i$, $i=1,2$, are dense. Let $Z$ denote the intersection $X_1\cap X_2$ with the reduced structure. Let
\[
    a_Z\colon Z\hookrightarrow X,\hspace{1cm}a_{X_i}\colon X_i\hookrightarrow X \ \ (i=1,2)
\]
be the inclusions, write $\mls G_X:=\rH_{\bG^{\ani}}$, and for a closed subscheme $Y\subset X$ write $\mls G_Y:=\mls G_X|_Y$. If the conclusion of \ref{T:8.1} holds for $(X_1, \mls G_{X_1})$ and $(Z, \mls G_Z)$, then the conclusion of \ref{T:8.1} holds for $(X, \mls G_X)$ if and only if it holds for $(X_2, \mls G_{X_2})$.
\end{prop}
\begin{proof}
For $i=1,2$ let $r_i\colon\mls G_X\to a^{\ani}_{X_i*}\mls G_{X_i}$ be the maps induced by adjunction and set
\[
    \varphi:=(r_1,r_2)\colon\mls G_X\to a^{\ani}_{X_1*}\mls G_{X_1}\oplus  a^{\ani}_{X_2*}\mls G_{X_2}.
\]
We claim that the conclusion of \ref{T:8.1} holds for $\cone(\varphi)$. This will imply the result by \ref{lem:observation in reductions}.

Let $\mls I\subset \mls O_X$ be the ideal defining $Z$. For $n\geq 1$ let $Z_n$ denote the closed subscheme of $X$ defined by $\mls I^n$ and let $a_{Z_n}\colon Z_n\hookrightarrow X$ denote the inclusion. For $n\geq 1$ and $i=1,2$ let  $Z_{i,n}$ denote the closed subscheme of $X_i$ defined by the image of $\mls I^n$ in $\mls O_{X_i}$ and let $a_{Z_{i,n}}\colon Z_{i,n}\hookrightarrow X$ denote the inclusion. For each $n\geq 1$ we obtain a diagram
\begin{equation}\label{E:9.21.1}
    \begin{tikzcd}
        \mls G_X\arrow{d}[swap]{\varphi}\arrow{r}&\widehat{\mls G}_X\arrow{d}[swap]{\widehat{\varphi}}\arrow{r}&a^{\ani}_{Z_n*}\mls G_{Z_n}\arrow{d}[swap]{\varphi_n}\\
        a_{X_{1}*}^\ani \mls G_{X_1}\oplus a_{X_{2}*}^\ani \mls G_{X_2}\arrow{r}&a_{X_{1}*}^\ani \widehat{\mls G}_{X_1}\oplus a_{X_{2}*}^\ani \widehat{\mls G}_{X_2}\arrow{r}&a_{Z_{1,n}*}^\ani \mls G_{Z_{1, n}}\oplus  a_{Z_{2,n}*}^\ani \mls G_{Z_{2, n}}
    \end{tikzcd}
\end{equation}
where the completions are with respect to $Z$. This diagram induces maps
\begin{equation}\label{eq:two maps on cones}
    \cone(\varphi)\iso\cone(\widehat{\varphi})\to\cone(\varphi_n),
\end{equation}
where the first map is an isomorphism because \ref{C:keystatement} applied to the finite morphism $X_1\coprod X_2\rightarrow X$ implies that the left square is a homotopy pushout square. Define $Q$ to be the quotient in the short exact sequence
\[
    0\to\mls O_X\to a_{X_1*}\mls O_{X_1}\oplus a_{X_2*}\mls O_{X_2}\to Q\to 0.
\]
Note that the left map is injective because $X$ is reduced. Since $Q$ is coherent and supported on $Z$ there exists an integer $N>0$ such that $\mls I^NQ = 0$.

\begin{claim}\label{claim:9.21}
For any $n\geq N$ there exists a dense open affine subset $\Sp (R)\subset S$ such that 
    the restriction of the square
    \begin{equation}\label{E:9.21.2}
    \begin{tikzcd}
        f_*^\ani \widehat{\mls G}_X\arrow{d}[swap]{\widehat{\varphi}}\arrow{r}&f_*^\ani(a^{\ani}_{Z_n*}\mls G_{Z_n})\arrow{d}[swap]{\varphi_n}\\
      f_*^\ani( a_{X_{1}*}^\ani \widehat{\mls G}_{X_1})\oplus f_*^\ani (a_{X_{2}*}^\ani \widehat{\mls G}_{X_2})\arrow{r}&f_*^\ani (a_{Z_{1,n}*}^\ani \mls G_{Z_{1, n}})\oplus f^{\ani}_*(a_{Z_{2,n}*}^\ani \mls G_{Z_{2, n}})
    \end{tikzcd}
    \end{equation}
        (obtained by applying $f^{\ani}_*$ to the right square of~\eqref{E:9.21.1}) to $\Alg _R$ is homotopy cartesian.
\end{claim}

Assuming the claim we obtain the proposition as follows. Applying $f^{\ani}_*$ to~\eqref{eq:two maps on cones} we obtain morphisms
\[
    f^{\ani}_*\cone(\varphi)\iso f^{\ani}_*\cone(\widehat{\varphi})\to f^{\ani}_*\cone(\varphi_n).
\]
By \ref{lem:reduction reduction} and our assumption that the conclusion of \ref{T:8.1} holds for $(Z,\mls G_Z)$, we have that the conclusion of \ref{T:8.1} also holds for $f^{\ani}_*\cone(\varphi_n)$ for all $n$. By the claim, if $n\geq N$ then after replacing $S$ by a dense affine open subscheme $\Spec R\subset S$ the right map restricts to an isomorphism on $\Alg_R$. Thus by \ref{C:9.21} the conclusion of \ref{T:8.1} holds for $\cone(\varphi)$, and the lemma follows.

We now prove Claim \ref{claim:9.21}. Shrinking on $S$ we may assume that $X$, $Z_n$, $Z_{i, n}$, and $Q$ are all flat over $S$ and that $S=\Spec R$ is affine. By the definition of $f^{\ani}_*$~\eqref{E:4.6b}, for a sheaf $\mls F\in\Shv(X^{\ani})$ the value of $f^{\ani}_*\mls F$ on an ordinary $R$-algebra $A$ is given by
\[
    (f^{\ani}_*\mls F)(A)=\holim_{B\in\Alg_{X_{A}}}\mls F_B(B),
\]
where $X_A=X\otimes_RA$ and $\mls F_B\in\Shv(\Alg_B^{\ani,\op})$ is the image of $\mls F$ under the natural restriction map. Moreover, we may take this homotopy limit only over those $B\in\Alg_{X_A}$ that are flat over $X_A$, hence flat over $A$. So, let $A$ be an ordinary $R$-algebra, fix $B\in\Alg_{X_A}$ flat over $R$, write $\Spec(B)\times_XX_i\simeq\Spec (B_i)$, let $B_n$ (resp. $B_{i,n}$) denote the reduction of $B$ (resp. $B_i$) modulo $\mls I^n$, and let $\widehat{B}$ (resp. $\widehat{B}_i$) denote the $\mls I$-adic completion of $B$ (resp. $B_i$). Using \ref{L:8.8}, to show that~\eqref{E:9.21.2} is homotopy cartesian it will suffice to show that the square
\[
    \begin{tikzcd}
        \R\Gamma(\widehat{B},\bG_X)\arrow{r}\arrow{d}&\R\Gamma(B_n,\bG_X)\arrow{d}\\
        \R\Gamma(\widehat{B}_1,\bG_{X})\oplus\R\Gamma(\widehat{B}_2,\bG_{X})\arrow{r}&\R\Gamma(B_{1,n},\bG_X)\oplus\R\Gamma(B_{2,n},\bG_X)
    \end{tikzcd}
\]
is homotopy cartesian. This is a statement about flat cohomology. However, we can reduce it to a question about \'etale cohomology using the B\'egueri resolution \ref{pg:begueri resolution}
\[
    0\rightarrow \bG_X\rightarrow \bB^0_X\rightarrow \bB^1_X\rightarrow 0
\]
for $\bG_X$. To prove the claim it suffices to show that for $j=0, 1$ the above square with $\bG$ replaced by $\bB^j$ is homotopy cartesian.
Since pushforward along a closed immersion is exact for the \'etale topology \cite[\href{https://stacks.math.columbia.edu/tag/04CA}{Tag 04CA}]{stacks-project}, for this in turn it suffices to show that for $j=0,1$ the 
 diagram
\begin{equation}\label{eq:square for B}
    \begin{tikzcd}
        \bB^j_X(\widehat{B})\arrow[two heads]{r}\arrow{d}&\bB^j_X(B_n)\arrow{d}\\
        \bB^j_X(\widehat{B}_1)\oplus\bB^j_X(\widehat{B}_2)\arrow[two heads]{r}&\bB^j_X(B_{1,n})\oplus\bB^j_X(B_{2,n})
    \end{tikzcd}
\end{equation}
is cartesian, where the horizontal morphisms are surjective since $\bB^j_X$ is smooth. To see this note that considering the exact sequence (note that the exactness follows from the flatness of $Q$ over $R$)
\[
    0\rightarrow B\rightarrow B_1\oplus  B_2\rightarrow Q\otimes _{\mls O_X}B\rightarrow 0
\]
we see that a pair 
\[
(\alpha _1, \alpha _2)\in \bB^j_X(\widehat B_1)\oplus \bB^j_X(\widehat B_2)
\]
lies in the image of $\bB^j_X(\widehat B)$ if and only if the composition
\[
    \xymatrix{
        \mls O_{\bB^j_{X}}\ar[r]^-{(\alpha _1, \alpha _2)}& \widehat {B}_1\times \widehat {B}_2\ar[r] & Q\otimes _{\mls O_X}B}
\]
is zero.  Now since $\mls I^NQ = 0$ this map factors through $B_{1, n}\times B_{2, n}$, and therefore vanishes if the image of $(\alpha _1, \alpha _2)$ in $\bB^j_X(B_{1, n})\oplus \bB^j_X(B_{2, n})$ is in the image of $\bB^j_X(B_n)$. Thus for $n\geq N$ the square~\eqref{eq:square for B} is cartesian.
\end{proof}

\begin{lem}\label{lem:schematically dense} Let $f\colon X\rightarrow \Sp (R)$ be a finite type morphism of noetherian schemes with $R$ reduced, and let $j\colon U\hookrightarrow X$ be a schematically dense open subset of $X$.  Then there exists a dense affine open subscheme $\Sp (R')\subset \Sp (R)$ such that for any $R'$-algebra $A$ the base change $j_A\colon U_A\hookrightarrow X_A$ is schematically dense.
\end{lem}
\begin{proof}
    This is essentially contained in the proof of \cite[\href{https://stacks.math.columbia.edu/tag/0573}{Tag 0573}]{stacks-project}.  To prove the lemma it suffices to consider the case when $X = \Sp (B)$ is affine.  Since $B$ is assumed noetherian the ideal $I\subset B$ defining $Z$ is finitely generated, say $I = (g_1, \dots, g_m)$.  By \cite[\href{https://stacks.math.columbia.edu/tag/0565}{Tag 0565}]{stacks-project} if $A$ is an $R$-algebra then $U_A\hookrightarrow \Sp (B\otimes _RA)$ is schematically dense if and only if the base change of the map
    \[
        \begin{tikzcd}
            B\arrow{r}{(g_1,\ldots,g_m)}&B^{\oplus m}
        \end{tikzcd}
    \]
    to $A$ is injective. Let $M$ denote the cokernel of this map. By \cite[\href{https://stacks.math.columbia.edu/tag/051T}{Tag 051T}]{stacks-project} we can, after shrinking on $\Sp (R)$, arrange that $M$ is flat over $R$, in which case the base change of the sequence
    \[
        \begin{tikzcd}
            0\arrow{r}&B\arrow{r}{(g_1,\ldots,g_m)}&B^{\oplus m}\arrow{r}& M\arrow{r}& 0
        \end{tikzcd}
    \]
    along any map $R\rightarrow A$ remains exact.
\end{proof}

\begin{lem}\label{L:9.26} Let $R\rightarrow B$ be a finite type morphism of noetherian rings.  Let $J\subset B$ be an ideal, let $Q$ be a $B$-module annihilated by some power of $J$, and for  $n\geq 1$ let $B_n$ denote $B/J^n$.
\begin{enumerate}
    \item There exists an integer $N\geq 1$ such that for all $n\geq N$ the map
    \[
        \Ext^1_B(Q, B)\rightarrow \Ext^1_B(Q, B_n)
    \]
    is injective.
    \item If $R$ is furthermore assumed reduced then for every $n\geq N$ there exists a dense open affine subscheme $\Sp (R')\subset \Sp (R)$ (depending on $n$) such that for any $R'$-algebra $A$ the maps
    \[
        \Ext^1_B(Q, B)\otimes _RA\rightarrow \Ext^1_B(Q, B\otimes _RA)
    \]
    and
    \[
        \Ext^1_B(Q, B_n)\otimes _RA\rightarrow \Ext^1_B(Q, B_n\otimes _RA)
    \]
    are isomorphisms, and the restriction map
    \[
        \Ext^1_B(Q, B\otimes _RA)\rightarrow \Ext^1_B(Q, B_n\otimes _RA)
    \]
    is injective.
\end{enumerate}
\end{lem}
\begin{proof}
Let $m$ be a constant such that $J^mQ = 0$, and let $F_\bullet \rightarrow Q$ be a free resolution of $Q$ so that for a $B$-module $M$ we have $\Ext^i_B(Q, M)= \rH ^i(F_\bullet ^\vee \otimes _BM).$  Let $Z^1$ denote the kernel of the map $F_1^\vee \rightarrow F_2^\vee $.  By the Artin--Rees lemma \cite[\href{https://stacks.math.columbia.edu/tag/00IN}{Tag 00IN}]{stacks-project} there exists a constant $c$ such that for $n\geq c$ 
\[
    \ker(J^nF_1^\vee \rightarrow J^nF_2^\vee ) = Z^1\cap J^nF_1^\vee \subset J^{n-c}(Z^1\cap J^cF_1^\vee ).
\]
This implies that the image of the map
\[
    \Ext^1_B(Q, J^n)\rightarrow \Ext^1_B(Q, A)
\] 
is contained in $J^{n-c}\Ext^1_B(Q, A)$. Therefore if $n-c\geq m$ then this image is $0$, since $Q$ is annihilated by $J^m$, and then considering the long exact sequence of Ext-groups associated to the sequence
\[
    0\rightarrow J^n\rightarrow A\rightarrow A_n\rightarrow 0
\]
we conclude that (1) holds with $N = c+m$.

To get statement (2) choose $R'$ such that the cohomology groups of the complexes 
\[
    F_0^\vee \rightarrow F_1^\vee \rightarrow F_2^\vee, \ \ F_0^\vee \otimes _BB_n\rightarrow F_1^\vee \otimes _BB_n\rightarrow F_2^\vee \otimes _BB_n
\]
restrict to flat $R'$-modules over $R'$, which implies that the cohomology groups of these complex commutes with arbitrary base change $R'\rightarrow A$, and furthermore the cokernel of the map of Ext-groups in (1) is flat over $R'$.
\end{proof}

\begin{prop}\label{P:9.8}
Let $f\colon X\rightarrow S$ be a proper morphism of noetherian schemes with $S$ reduced. Let $\varphi\colon\bG\rightarrow\bH$ be a morphism of commutative finite locally group schemes over $X$, and assume that $\varphi$ is an isomorphism over the complement of a nowhere dense closed subscheme $Z\subset X$. Set $\mls G:=\rH_{\bG^{\ani}}$, $\mls H:=\rH_{\bH^{\ani}}$, write also $\varphi\colon\mls G\to\mls H$ for the induced map, and set
\[
    \mls K:=\cone(\mls G\xrightarrow{\varphi}\mls H)\in\Shvf(X^{\ani}).
\]
If the conclusion of \ref{T:8.1} holds for $(Z,\mls K_Z)$, then the conclusion of \ref{T:8.1} holds for $(X,\mls G)$ if and only if it holds for $(X,\mls H)$.
\end{prop}
\begin{proof}
This is very similar to the proof of \ref{P:9.5b}.
Using \ref{prop:reduced subscheme lemma} we may without loss of generality assume that $X$ is reduced, $f$ is flat, and that $S = \Sp (R)$ is affine.
Let $U$ denote $X\setminus Z$ and let $j\colon U\hookrightarrow X$ be the inclusion.  After shrinking further on $\Sp (R)$ we may also assume by \ref{lem:schematically dense} that for every $R$-algebra $A$ the base change $j_A\colon U_A\hookrightarrow X_A$ of $j$ to $A$ is schematically dense.

Let $\mls I\subset\mls O_X$ be the ideal defining $Z$. For $n\geq 1$ let $Z_n\subset X$ be the subscheme defined by $\mls I^n$ and let $a_{n}\colon Z_n\hookrightarrow X$ denote the inclusion. Consider the diagram
\begin{equation}\label{eq:two squares 9.24}
    \begin{tikzcd}
        \mls G\arrow{d}[swap]{\varphi}\arrow{r}&\widehat{\mls G}\arrow{d}[swap]{\widehat{\varphi}}\arrow{r}&a_{Z_n*}^{\ani}\mls G_{Z_n}\arrow{d}[swap]{\varphi_n}\\
        \mls H\arrow{r}&\widehat{\mls H}\arrow{r}&a_{Z_n*}^{\ani}\mls H_{Z_n}
    \end{tikzcd}
\end{equation}
where the completions are with respect to $Z$, and the induced maps
\[
    \mls K=\cone(\varphi)\iso\cone(\widehat{\varphi})\to\cone(\varphi_n).
\]
Here the left map is an isomorphism because by \ref{T:5.8} the left square is a homotopy pushout square. We make the following claim.

\begin{claim}\label{C:9.25}
    There exists an integer $N\geq 1$ such that for any $n\geq N$, after replacing $\Spec R$ with a dense affine open subscheme we have that the restriction of the square
    \begin{equation}\label{eq:square 9.25}
        \begin{tikzcd}
            f^{\ani}_*\widehat{\mls G}\arrow{d}{\widehat{\varphi}}\arrow{r}&f^{\ani}_*(a^{\ani}_{Z_n*}\mls G_{Z_n})\arrow{d}{\varphi_n}\\
            f^{\ani}_*\widehat{\mls H}\arrow{r}&f^{\ani}_*(a^{\ani}_{Z_n*}\mls H_{Z_n})
        \end{tikzcd}
    \end{equation}
    (obtained by applying $f^{\ani}_*$ to the right square of~\eqref{eq:two squares 9.24}) to $\Alg_R$ is homotopy cartesian.
\end{claim}

As in the proof of \ref{P:9.5b}, this will imply the result. Indeed, by \ref{lem:reduction reduction} and our assumption that the conclusion of \ref{T:8.1} holds for $(Z, \mls K_Z)$ we know that the conclusion of \ref{T:8.1} also holds for $\cone(\varphi_n)$ for all $n$. Given the claim, we will have that the composition
\[
    f^{\ani}_*\cone(\varphi)\iso f^{\ani}_*\cone(\widehat\varphi)\to f^{\ani}_*\cone(\varphi_n)
\]
restricts to an isomorphism over a dense open, and so the proposition will follow from \ref{C:9.21}.

We prove \ref{C:9.25}. Consider the B\'egueri resolutions for $\bG$ and $\bH$. To distinguish between the two, we denote them by the top and bottom row of the diagram
\[
    \begin{tikzcd}
        0\arrow{r}&\bG\arrow{r}\arrow{d}{\varphi}&\bB^0\arrow{r}\arrow{d}{\varphi^0}&\bB^1\arrow{r}\arrow{d}{\varphi^1}&0\\
        0\arrow{r}&\bH\arrow{r}&\bC^0\arrow{r}&\bC^1\arrow{r}&0,
    \end{tikzcd}
\]
where the vertical morphisms are isomorphisms over the complement of $Z$. Consider the maps on coordinate rings 
\[
    (\varphi^{j})^*\colon \mls O_{\bC^j}\rightarrow \mls O_{\bB^j}
\]
induced by $\varphi^j$. Since $X$ is reduced, this map is injective. Fix a coherent subsheaf $\mls M_j\subset \mls O_{\bB^j}$ which generates $\mls O_{\bB^j}$ as an $\mls O_X$-algebra and set $\mls N_j=\mls O_{\bC^j}\cap \mls M_j$. After possibly enlarging $\mls M_j$ we may assume that $\mls N_j$ also generates $\mls O_{\bC^j}$ as an $\mls O_X$-algebra. Let $\mc Q_j$ denote the quotient $\mls M_j/\mls N_j$.
Fix a finite cover $X = \cup _{i=1}^s\Sp (B_i)$. By \ref{L:9.26} we can, after replacing $\Sp (R)$ by an affine dense open subset, find an integer $N\geq 1$ such that for every $R$-algebra $A$ and flat morphism $\Sp (C)\rightarrow \Sp (B_i\otimes _RA)$ the following hold for $j=0,1$:
\begin{enumerate}
    \item The map $\text{Ext}^1_C(\mc Q_j|_{\Sp (C)}, C)\rightarrow \text{Ext}^1_C(\mc Q_j|_{\Sp (C)}, C_n)$ is injective, where $C_n$ denotes the quotient of $C$ by the ideal defining the preimage of $Z_n$.
    \item The preimage of $X\setminus Z$ in $\Sp (C)$ is schematically dense in $\Sp (C)$.
\end{enumerate}
Take $n\geq N$. As in the proof of \ref{P:9.5b}, to show that the restriction of the square~\eqref{eq:square 9.25} to $\Alg_R$ is homotopy cartesian it suffices to show that the square
\begin{equation}\label{eq:diagramBC}
    \begin{tikzcd}
        \bB^j(\widehat{C})\arrow{d}\arrow[two heads]{r}&\bB^j(C_n)\arrow{d}\\
        \bC^j(\widehat{C})\arrow[two heads]{r}&\bC^j(C_n)
    \end{tikzcd}
\end{equation}
is cartesian for $j=0,1$, where $\widehat C$ is the completion of $C$. Note that the horizontal morphisms are surjective since $\bB^j$ and $\bC^j$ are smooth. Let $M_j$ (resp. $N_j$) denote the $C$-module corresponding to $\mls M_j|_{\Sp (C)}$ (resp. $\mls N_j|_{\Sp (C)}$), and fix a homomorphism $c\colon \mls O_{\bC ^j, C}\rightarrow \widehat C$.  Also let $W\subset \Sp (\widehat {C})$ denote the preimage of $Z$, which by (2) is schematically dense.  We then have a commutative diagram of solid arrows
$$
\xymatrix{
\text{Sym}^\bullet N_j\ar@{->>}[d]\ar[r]& \text{Sym}^\bullet M_j\ar@{->>}[d]\\
\mls O_{\bC ^j, C}\ar@{^{(}->}[r]\ar[d]^-c& \mls O_{\bB ^j, C}\ar@{-->}[ld]\ar[ddl]^-{b_W}\\
\widehat C\ar@{^{(}->}[d]& \\
\Gamma (W, \mls O_W),}
$$
from which it follows that $c$ extends to an algebra homomorphism $b\colon\mls O_{\bB ^j, C}\rightarrow \widehat C$ if and only if the restriction $c|_{N_j}\colon N_j\rightarrow \widehat C$ extends to a $C$-module homomorphism $M_j\rightarrow \widehat C$.  Equivalently, if and only if the pushout of the short exact sequence
$$
0\rightarrow N_j\rightarrow M_j\rightarrow \mc Q_j|_{\Sp (C)}\rightarrow 0
$$
along $c|_{N_j}\colon N_j\rightarrow \widehat C$ splits.  This follows from (1) and the fact that the image of $c$ in $\bC^j(C_n)$ lifts to $\bB^j(C_n)$. We conclude that the square~\eqref{eq:diagramBC} is cartesian for $j=0,1$. This completes the proof of Claim \ref{C:9.25} and hence Proposition \ref{P:9.8}.
\end{proof}

\begin{prop}[Invariance under blowing up]\label{prop:blow up inductive step v2}
    Let $f\colon X\to S$ be a projective morphism of noetherian schemes of characteristic $p$ with $S$ reduced. Let $\pi\colon X'\rightarrow X$ be a blowup (in the ordinary sense) of $X$ along a nowhere dense closed subscheme $Z\subset X$ and let $f'\colon X'\rightarrow S$ be the composition of $f$ with $\pi$. Suppose that $f$ has generic relative dimension $d$ and that the conclusion of \ref{T:8.1} holds for all pairs $(Y,\mls G)$ where $Y\to S$ is a projective morphism of relative dimension $\leq d-1$ and $\mls G\in\Shvf(Y^{\ani})$. Then for $\mls F\in\Shvf(X^{\ani})$, the conclusion of \ref{T:8.1} holds for $(X,\mls F)$ if and only if it holds for $(X',\mls F|_{X'})$.
\end{prop}
\begin{proof}
    By \ref{lem:animated coho of complex} and \ref{lem:observation in reductions} it suffices to consider the case when $\mls F=\rH_{\bG^{\ani}}$ for a finite locally free group scheme $\bG$ over $X$. We may assume that $S$ is affine. Let $\mls I\subset\mls O_X$ be the ideal defining $Z$. As $X$ is projective over $S$ we may find an ample invertible sheaf $\mls L$ on $X$ and a surjection 
    \[
        (\mls L^{\otimes n})^{\oplus r}\twoheadrightarrow\mls I
    \]
    for some $r$ and $n$, and so extend $\pi$ to a derived blowup say $\pi^{\ani}\colon X^{\prime\,\ani}\to X$. Let $Y:=\pi_0(X^{\prime\,\ani})$ be the underlying scheme of the derived blowup. We obtain a diagram
    \[
        \begin{tikzcd}
            Y\arrow[hook]{r}&X^{\prime\,\ani}\arrow{d}{\pi^{\ani}}\\
            X'\arrow{r}{\pi}\arrow[hook]{u}&X.
        \end{tikzcd}
    \]
    Define $Z^{\ani}$ as in~\eqref{pg:derived blowups, again}. For $i\geq 0$ let $Z_i$ denote the derived scheme $\tau_{\leq i}Z^{\ani}$. We have a sequence of maps
    \[
        Z = Z_0\hookrightarrow Z_1\hookrightarrow \cdots \hookrightarrow Z_n = Z^\ani.
    \]
    Each map $Z_i\hookrightarrow Z_{i+1}$ is a square-zero thickening in the sense of \cite[5.1.9]{CS}, with ideal $\mls J_n$ a bounded complex of coherent sheaves on $Z$. As $Z$ has relative dimension $\leq d-1$ over $S$, our assumptions imply that the conclusion of \ref{T:8.1} holds for $(Z,\mls F|_Z)$. Arguing as in the proof of \ref{prop:reduced subscheme lemma} using \cite[5.2.8]{CS} we find that there exists a dense open subscheme $U\subset S$ over which the cocone of the natural map 
    \[
        f_{Z^\ani *}^\ani \mls F|_{Z^\ani }\to f_{Z*}^{\ani}\mls F|_Z
    \]
    restricts to an object of $\Shvf(U^{\ani})$.
     Applying \ref{lem:derived blow up pushout} and sheafifying, and using \ref{lem:observation in reductions}, we obtain that the conclusion of \ref{T:8.1} holds for $(X,\mls F)$ if and only the same is true for the cohomology of $\mls F|_{X^{\prime\,\ani}}$. Arguing as above, this is the case if and only if the conclusion of \ref{T:8.1} holds for $(Y,\mls F|_Y)$. Now, by
    \ref{L:9.7} the scheme $Y=\pi _0(X^{\prime\,\ani})$ underlying $X^{\prime\,\ani}$ can be decomposed as a union
    \[
        Y=X'\cup\mathbf{P}^r_Z\cup T
    \]
    for some integer $r$ and closed subscheme $T$ of dimension $<\mathrm{dim}(X)$. By our assumptions the conclusion of \ref{T:8.1} holds for $(T,\mls F|_{T})$. Applying \ref{prop:main rep thm, animated version} to $\mls F|_{\mathbf{P}^r_Z}$ and the morphism $\mathbf{P}^r_Z\to Z$ and using our assumption again, we see that the same is true for $(\mathbf{P}_Z,\mls F|_{\mathbf{P}^r_Z})$. By \ref{P:9.5b} (and using \ref{prop:reduced subscheme lemma} if needed to pass to reduced subschemes) we get that the conclusion of \ref{T:8.1} holds for $(Y,\mls F|_Y)$ if and only if it holds for $(X',\mls F|_{X'})$.
\end{proof}

The following result will allow us to reduce the case of a general finite flat group scheme to the cases of $\bmu_p$, $\balpha_p$, and $\bZ/q$.

\begin{prop}\label{P:8.13}
Let $f\colon X\rightarrow S$ be a finite type morphism of noetherian separated integral schemes of characteristic $p$ and let $\bG$ be a commutative finite flat group scheme over $X$. There exists a commutative diagram 
\begin{equation}\label{E:6.6.1}
\begin{tikzcd}
    X'\arrow{r}\arrow{d}[swap]{f'}&X\arrow{d}{f}\\
    S'\arrow{r}{g}&S
\end{tikzcd}
\end{equation}
with the following properties:
\begin{enumerate}
    \item [(i)] The morphism $g\colon S'\rightarrow S$ is a dominant quasi-finite morphism of schemes with $S'$ regular.
    \item [(ii)] The morphism $f'\colon X'\rightarrow S'$ admits a factorization
    \[
        \begin{tikzcd}
            X'\arrow{r}{d}&X^{\prime\prime}\arrow{r}&S',
        \end{tikzcd}
    \]
    where $d$ is a blowup and $X^{\prime\prime}\to S'$ is smooth and quasi-projective.
    \item [(iii)] The induced morphism $X'\rightarrow X\times _SS'$ admits a factorization
    \[
        \begin{tikzcd}
            X'\arrow{r}{a}&W\arrow{r}{b}&Y\arrow{r}{c}&X\times_SS',
        \end{tikzcd}
    \]
    where $a$ is a finite flat surjection which identifies $W$ with the quotient $X'/H$ of $X'$ by an action of a finite group $H$, $b$ is a finite flat surjection inducing a purely inseparable field extension $k(W)/k(Y)$, and $c$ is the blowup of a nowhere dense closed subscheme $Z\subset X\times _SS'$.
    \item [(iv)] The pullback $\bG'$ of $\bG$  to $X'$ admits a filtration
    \[
        0=\bG_{-1}\subset\bG_0\subset \bG_1\subset \cdots \subset \bG_s=\bG'
    \]
    where each $\bG_i$ is a commutative finite flat group scheme and for each of the successive quotients $\bG_{i}/\bG_{i-1}$ there exists a group scheme $\bH_i$ over $X'$ which is isomorphic to either $\bmu_p$, $\balpha_p$, or $\mathbf{Z}/q$ for a prime $q$ (possibly equal to $p$) and a morphism $\bH_i\to\bG_{i}/\bG_{i-1}$ which is an isomorphism over a dense open subscheme of $X'$.
\end{enumerate}
\end{prop}
\begin{proof}
Let $K$ denote the function field of $X$, let $K\hookrightarrow \overline K$ be an algebraic closure, and let $\bG_{\overline K}$ be the pullback of $\bG$ to $\overline K$. The simple objects in the category of commutative finite flat group schemes over $\overline K$ are of the form $\bmu_p$, $\balpha_p$, and $\mathbf{Z}/q$ for a prime $q$. We therefore have a filtration
\[
    0=\bG_{-1,\overline{K}}\subset\bG_{0, \overline K}\subset \bG_{1, \overline K}\subset \cdots \subset \bG_{s, \overline K}\subset \bG_{\overline K}
\]
such that for each $i$ the quotient $\bG_{i, \overline K}/\bG_{i-1, \overline K}$ is isomorphic to one of $\bmu_p$, $\balpha_p$, or $\mathbf{Z}/q$ for a prime $q$. By a standard limit argument this filtration descends to $\bG_L$ for a finite extension $L/K$.  Let $V\rightarrow X$ be the normalization of $X$ in $L$.  Let $\bG_{i, V}\subset \bG_V$ be the scheme-theoretic closure of $\bG_{i, L}$ in $\bG_V$. By Raynaud--Gruson \cite[I, 5.2.2]{RG} there exists a blowup $V'\rightarrow V$ such that the strict transform $\bG_{i, V'}$ of $\bG_{i, V}$ is a subgroup scheme of $\bG_{V'}$ flat over $V'$. After a further blowup up we may arrange for the existence of a morphism of group schemes $\bH_i\to\bG_i/\bG_{i-1}$ for each $i$ as in the conclusion of (iv).

Let $\kappa _S$ denote the function field of $S$ and let $X_\eta $, $V_\eta $, etc. denote the generic fibers. Arguing as above we can by  \cite[7.3]{deJong} find a finite extension $E/\kappa _S$ and a generically finite proper morphism $X^{\prime \prime }_\eta \rightarrow V_\eta '\otimes _{\kappa _S}E$ with $X^{\prime \prime }_\eta $ smooth and projective over $E$ equipped with an action of a finite group $H$ such that the induced map $X^{\prime \prime }_\eta /H\rightarrow V_\eta '\otimes _{\kappa _S}E$ is generically purely inseparable.  
Applying another blowup $Y\rightarrow X$ and setting $X'$ equal to the strict transform of $X^{\prime \prime }$ we then obtain the desired diagram \eqref{E:6.6.1} over the generic point of $S$.  Spreading out, we then obtain the conclusion of the proposition.

\end{proof}

\begin{pg}{\bf Proof of \ref{T:8.1}.} 
We now prove \ref{T:8.1}. Let $f\colon X\to S$ be a projective morphism of noetherian schemes of characteristic $p$ with $S$ reduced and $\mls F\in\Shvf(X^{\ani})$.
    We begin with some reductions. Let $d$ be the generic relative dimension of $f$ (the maximum of the relative dimensions of $f$ over the generic points of $S$). Note that if $d=0$ then the result is tautological owing to how we have defined the categories $\Shvf(X^{\ani})$. By induction on $d$ we may therefore assume that \ref{T:8.1} holds for morphisms of generic relative dimension $\leq d-1$. By \ref{prop:reduced subscheme lemma} we may assume that $X$ is reduced, and applying \ref{P:9.5b}, our induction hypothesis, and using \ref{prop:reduced subscheme lemma}, we may assume that $X$ is integral. By shrinking on $S$ we may assume that $S$ is integral and separated, hence $X$ is also separated, and that $f$ is flat. Finally, by \ref{lem:animated coho of complex} and \ref{lem:observation in reductions} it suffices to consider the case when $\mls F=\rH_{\bG^{\ani}}$ for a finite locally free group scheme $\bG$ on $X$.
\end{pg}

\begin{pg}\label{pg:last step in proof} 
It will suffice to prove the result after base change by a dominant quasi-finite morphism $S'\to S$, as such a map is generically flat because $S$ is reduced. Applying \ref{P:8.13} and replacing $S$ by such an $S'$, we may find a smooth projective morphism $X^{\prime\prime}\to S$ and a commutative diagram
\begin{equation}\label{E:7.18.1}
\begin{tikzcd}
    X'\arrow{r}{a}\arrow{d}[swap]{d}&W\arrow{r}{b}&Y\arrow{r}{c}&X\arrow{d}{f}\\
    X^{\prime \prime}\arrow{rrr}&&&S
\end{tikzcd}
\end{equation}
where $a$ is a finite flat surjection which identifies $W$ with the quotient $X'/H$ of $X'$ by an action of a finite group $H$, $b$ is a finite flat surjection which induces a purely inseparable extension of function fields, $c$ and $d$ are blowups, and the pullback $\bG|_{X'}$ of $\bG$ to $X'$ admits a filtration as in \ref{P:8.13} (iv). We will show that the conclusion of \ref{T:8.1} holds for $(X,\mls F)$.
By \ref{prop:blow up inductive step v2}, the conclusion of \ref{T:8.1} holds for $(X,\mls F)$ if and only if it holds for $(Y,\mls F|_Y)$, so we may further assume that $Y = X$. As $X^{\prime\prime}$ is smooth over $S$, \ref{prop:main rep thm, animated version} show that the conclusion of \ref{T:8.1} holds for $(X^{\prime\prime},\rH_{\bG^{\ani}})$ for $\bG$ any of the $X^{\prime\prime}$-group schemes $\bmu_p$, $\balpha_p$, or $\bZ/q$ for a prime $q$. By the induction hypothesis and \ref{prop:blow up inductive step v2} the conclusion of \ref{T:8.1} holds also for the animated sheaves associated to any of these group schemes on $X'$ and the morphism $f'\colon X'\to S$. By \ref{P:9.8} the conclusion of \ref{T:8.1} holds for the animated sheaves associated to each of the successive quotients $(\bG_i/\bG_{i-1})|_{X'}$, and thus the conclusion \ref{T:8.1} holds for $(X',\mls F|_{X'})$. By construction, the map $f'$ factors as the composition of the finite flat surjection $X'\to X$ followed by $f\colon X\to S$. For $n\geq 0$ set
\[
    X'_n:= \underbrace{X'\times_{X}X'\times_{X}\cdots \times _{X}X'}_{n+1}
\]
and let $f'_n\colon X'_n\to S$ be the structure morphism. There is a spectral sequence relating the cohomologies $f^{\prime\,\ani}_{n*}\mls F|_{X'_n}$ to $f^{\ani}_*\mls F$. From this we see that to prove that the conclusion of \ref{T:8.1} holds for $(X,\mls F)$ it will suffice to prove the same for $(X'_n,\mls F|_{X'_n})$ for each $n\geq 0$.

For this we proceed as follows. For $n\geq 0$ set        
\[
    \widetilde X_n:= \underbrace{X'\times _{W}X'\times _{W}\cdots \times _{W}X'}_{n+1}\quad\text{and}\quad W_n:= \underbrace{W\times _{X}W\times _{X}\cdots \times _{X}W}_{n+1}
\]
    and consider the morphism
    \[
        z_n:\coprod _{(g_1, \dots, g_n)\in H^n}X'\rightarrow X'_n
    \]
    given by taking the coproduct of the maps
    \[
        X'\rightarrow X'_n, \ \ w\mapsto (x, g_1x, \dots, g_nx).
    \]
    Since $H$ acts on $X'$ over $W$ the map $z_n$ admits a factorization of the form
    \[
        \begin{tikzcd}
            \coprod_{H^n}X'\arrow{r}{\widetilde{z}_n}\arrow[bend right=25]{rr}[swap]{z_n}&\widetilde{X}_n\arrow[hook]{r}&X'_n
        \end{tikzcd}
    \]
    where $\widetilde{z}_n$ is an isomorphism over a dense open subscheme of the target and $\widetilde{X}_n\hookrightarrow X'_n$ is a closed immersion defined by a nilpotent ideal. We have already shown that the conclusion of \ref{T:8.1} holds for $(X',\mls F|_{X'})$, which implies the same for $(\coprod_{H^n}X',\mls F|_{\coprod_{H^n}X'})$ for each $n\geq 0$. By \ref{prop:blow up inductive step v2} this implies that the conclusion of \ref{T:8.1} holds for $(\widetilde{X}_n,\mls F|_{\widetilde{X}_n})$. Using that $\widetilde{X}_n\hookrightarrow X'_n$ is in infinitesimal thickening and \ref{prop:reduced subscheme lemma} we conclude that the conclusion of \ref{T:8.1} holds for $(X'_n,\mls F|_{X'_n})$ for each $n\geq 0$. This completes the proof of \ref{T:8.1}.
    
\qed
\end{pg}

\section{Compactly supported cohomology}\label{S:compact}

While not strictly necessary for the proofs of the main results,  it is natural to consider a notion of compactly supported flat cohomology, which we now define, and the generalization \ref{T:5.12} of \ref{T:8.1}.  This notion of compactly supported cohomology is the natural extension of the theory for \'etale sheaves and the theory for coherent sheaves considered in \cite{Hartshornecompact}.


\begin{defn} Let $X$ be a noetherian algebraic space and let $Z\subset X$ be a closed subspace. For $\mls F\in \Shvf(X^{\ani})$ we define the \emph{compactly supported cohomology of $\mls F$ with respect to $Z$}, denoted $\R\Gamma _c((X, Z), \mls F)$, by
\[
    \R\Gamma _c((X, Z), \mls F):= \R\Gamma (X, \mls S_{(X,Z)}(\mls F)),
\]
where $\mls S_{(X, Z)}(\mls F)$ is defined as in \ref{P:Scomplexdef}.
If $f\colon X\rightarrow S$ is a morphism of algebraic spaces we also consider the relative version, denoted $f_!^{(X, Z)}\mls F$, defined by
\[
    f_!^{(X, Z)}\mls F:=f_*^\ani \mls S_{(X,Z)}(\mls F)\in \Pro(\Shv(S^{\ani})).
\]
\end{defn}
 
It follows from the definition that there is a distinguished triangle
\[
    \R\Gamma _c((X, Z), \mls F)\rightarrow \R\Gamma (X, \mls F)\rightarrow \R\Gamma (X, \widehat {\mls F})\rightarrow \R\Gamma _c(X, \mls F)[1].
\]

\begin{rem} Observe that in the case when $\mls F$ is the functor associated to a coherent sheaf our definition recovers the definition of compactly supported cohomology in \cite{Hartshornecompact}.
\end{rem}

\begin{notation}
    Let $\Shv_{\dag}(X^{\ani})\subset\Pro(\Shv(X^{\ani}))$ denote the smallest full stable $\infty$-subcategory containing the image of $\Shvf(X^{\ani})$ under~\eqref{eq:yoneda embedding in pro category} and the image of $\Pro(\Shv^{\pf}(X^{\ani}))$ under~\eqref{eq:embedding of pro category 2}, where $\Shv^{\pf}(X^{\ani})\subset\Shv(X^{\ani})$ is the full stable $\infty$-category generated by the animated sheaves associated to perfect complexes on $X$.
\end{notation}

\begin{thm}\label{T:5.12} Let $f\colon X\rightarrow S$ be a projective morphism of noetherian schemes of characteristic $p$ with $S$ reduced and let $Z\subset X$ be a closed subscheme. 
    \begin{enumerate}
        \item For any $\mls F\in \Shvf(X^{\ani})$ there exists a dense open subscheme $U\subset S$ such that $f_*^\ani \widehat {\mls F}$ restricts to an object of $\Shv_\dag (U^\ani )$.
        \item For any $\mls F\in \Shvf(X^{\ani})$ there exists a dominant flat quasi-finite morphism $U\rightarrow  S$ such that the restriction of $f_!^{(X, Z)}(\mls F)$ to $U$ lies in $\Shv_\dag (U^{\ani})$.
    \end{enumerate}
\end{thm}
\begin{proof}
Statement (2) follows from (1) by considering the
 fiber sequence
\[
    f^{(X,Z)}_{!}\mls F\to f^{\ani}_*\mls F\to f^{\ani}_*\widehat{\mls F}\to f^{(X,Z)}_{!}\mls F[1]
\]
in $\Pro (\Shv(S^{\ani}))$ and \ref{T:8.1}.

To prove statement (1) note that the collection of objects $\mls F\in \Shvf(X^{\ani})$ for which the conclusion of (1) holds is a full stable $\infty $-subcategory so it suffices to verify the statement for $\mls F = w^\ani _*\rH $ for a finite morphism $w\colon W\rightarrow X$ and $\rH = \rH _{\mls V }^\ani $ for a perfect complex $\mls V $ on $W$ and $\rH = \rH _{\bG ^\ani }$ for a finite locally free group scheme $\bG /W$.  Replacing $X$ by $W$ and using \ref{L:8.7b} we are further reduced to the case when $W= X$.

Shrinking on $S$ if necessary we may assume that the following hold:
\begin{enumerate}
    \item [(i)] $f\colon X\rightarrow S$ and $f_Z\colon Z\rightarrow S$ are flat. 
    \item [(ii)] $S$ is regular.  
    \item [(iii)] If $\mls J\subset \mls O_X$ denotes the ideal sheaf of $Z$ in $X$ then $\mls J^n/\mls J^{n+1}$ is flat over $S$ for all $n\geq 0$.  Indeed consider the scheme $\widetilde Z:=\underline {\Sp }_Z(\oplus _{n\geq 0}\mls J^n/\mls J^{n+1})$.  Then to arrange that $\mls J^n/\mls J^{n+1}$ is flat over $S$ for all $S$ it suffices to shrink on $S$ so that the finite type $S$-scheme $\widetilde Z$ is flat over $S$.
    \item [(iv)] $f_{Z*}^\ani \rH|_Z\in \Shvf(S^{\ani})$ (using \ref{T:8.1}).
    \end{enumerate}
    Let $Z_n\subset X$ denote the closed subscheme defined by $\mls J^n$ and let $f_{Z_n}\colon Z_n\rightarrow S$ be the restriction of $f$.  To prove the theorem it suffices to show that under the above conditions the fiber of the restriction map $f_{Z_n*}^\ani \rH |_{Z_n}\rightarrow f_{Z*}^\ani \rH |_Z$ is in $\Shv^{\pf}(S^\ani)$, and for this in turn it suffices to show that the fiber $\mc K_n$ of the map $f_{Z_n*}^\ani \rH |_{Z_n}\rightarrow f_{Z_{n-1}*}^\ani \rH |_{Z_{n-1}}$ is the sheaf associated to a perfect complex.  Using cohomology and base change \cite[\href{https://stacks.math.columbia.edu/tag/07VK}{Tag 07VK}]{stacks-project}, in the case when $\rH = \rH _{\mls V }^\ani $ the sheaf $\mls K_n$ is the animated sheaf associated to $\R f_{*} (\mls V \lotimes \mls J^{n-1}/\mls J^n)$, and in the case when $\rH  = \rH _{\bG ^\ani }$ for a finite locally free group scheme $\bG /X$ we have by \cite[5.2.8]{CS} that $\mls K_n$ is the animated sheaf associated to $\R f_{*}(\RHom (\ell_{\bG /X}, \mls J^{n-1}/\mls J^n)$. As $S$ is assumed regular, any bounded complex of coherent sheaves is perfect. Thus, in both cases we conclude that $\mls K_n$ is the animated sheaf associated to a perfect complex on $S$.
\end{proof}

\section{Examples}\label{sec:examples}
In this section we record some examples of flat cohomology groups, illustrating our main results.

\begin{example}[Flat cohomology of $\balpha_p$]\label{ex:cohomology of alpha_p}
    Let $k$ be a field of characteristic $p$ and let $f\colon X\to\Spec k$ be a proper $k$-scheme. As described in \ref{pg:methods of proof}, the flat cohomology of $\balpha_p$ may be computed in terms of the coherent cohomology of $\mls O_X$ using the long exact sequence obtained by applying $\R f_*$ to the short exact sequence~\eqref{eq:alpha_p sequence, first appearence}, combined with the canonical isomorphisms $\bV(\rH^n(X,\mls O_X))\simeq\R^nf_*\bG_a$. For example, if $X=E$ is an elliptic curve, then $\rH^n(E,\mls O_E)=0$ for $n\geq 2$, hence $\R^nf_*\balpha_p=0$ for $n\geq 3$, and in low degrees we have $f_*\balpha_p\simeq\balpha_p$ and an exact sequence
    \[
        0\to\R^1f_*\balpha_p\to\R^1f_*\bG_a\xrightarrow{\F}\R^1f_*\bG_a\to\R^2f_*\balpha_p\to 0.
    \]
    As $\rH^1(E,\mls O_E)$ is one dimensional, we have $\R^1f_*\bG_a\simeq\bG_a$. If $E$ is ordinary then $\F$ is an isomorphism, so $\R^1f_*\balpha_p=\R^2f_*\balpha_p=0$, and if $E$ is supersingular then $\F=0$, so we have $\R^1f_*\balpha_p\simeq\R^2f_*\balpha_p\simeq\bG_a$ as group schemes.
\end{example}

\begin{example}[Cohomology of curves]
    Let $C$ be a curve over an algebraically closed field $k$ of characteristic $p$ with structural morphism $f\colon C\to\Spec k$ and let $q$ be a nonzero integer (possibly divisible by $p$). The flat cohomology groups $\rH^n(C,\bmu_q)$ are well known, and may be computed using the Kummer sequence and Tsen's theorem. We give a similar computation of the flat cohomology sheaves $\R^nf_*\bmu_q$. Applying $\R f_*$ to the Kummer sequence on $C$, we obtain $\R ^0f_*\bmu_q\simeq\bmu_q$ and an exact sequence
    \[
        0\to\R^1f_*\bmu_q\to\Pic_C\xrightarrow{\cdot q}\Pic_C\to \R^2f_*\bmu_q\to\R^2f_*\bG_m\to\ldots.
    \]
    This gives $\R^1f_*\bmu_q\simeq\Pic_C[q]$. We claim that the above induces an isomorphism $\Pic_C/q\simeq\R ^2f_*\bmu_q$ of group schemes, and hence $\R ^2f_*\bmu_q\simeq\bZ/q$ as group schemes, and that $\R ^nf_*\bmu_q=0$ for $n\geq 3$. For this we note that by \ref{C:1} the sheaves $\R ^nf_*\bmu_q$ are representable by finite type $k$-group schemes for all $n\geq 0$. Moreover, to show that a map $\bH\to\bH'$ of finite type $k$-group schemes is an isomorphism, it suffices to check that it induces a bijection $\bH(A)\iso\bH'(A)$ on $A$-points for all artinian local $k$-algebras $A$. Thus, to complete our computation it will suffice to show that $(\R ^nf_*\bG_m)(A)=0$ for all $n\geq 2$ and all artinian local $k$-algebras $A$. For this we use Grothendieck's theorem \cite[Th\'{e}or\`{e}me 11.7]{MR0244271} that the flat cohomology of $\bG_m$ agrees with its \'{e}tale cohomology and the vanishing $\rH^n_{\et}(C,\bG_m)=0$ for $n\geq 2$ \cite[03RM]{stacks-project} to obtain $(\R ^nf_*\bG_m)(k)=\rH^n(C,\bG_m)=0$ for all $n\geq 2$. Furthermore, if $A$ is an artinian local $k$-algebra, then this combined this with the vanishing of the coherent cohomology of $C_A$ in degrees $\geq 2$ we obtain $\rH^n(C_A,\bG_m)=0$ for $n\geq 2$. We now use that any fppf cover $U\to\Spec A$ where $A$ is an artinian local $k$-algebra may be refined by an fppf cover $\Spec B\to\Spec A$ where $B$ is also an artinian local $k$-algebra \cite[0DET]{stacks-project}. It follows that $(\R ^nf_*\bG_m)(A)=0$ for any such $A$, as claimed.
\end{example}

\begin{example}[Cohomology of K3 surfaces]\label{ex:k3 surfaces}
    Let $X$ be a K3 surface over an algebraically closed field $k$ of characteristic $p$. The structure of the flat cohomology groups $\rH^n(X,\bmu_p)$ is well known (see e.g. \cite[\S2.7]{MR4585353}). We describe the flat cohomology sheaves $\R ^nf_*\bmu_p$ in terms of invariants of $X$, focusing on the case $n=2$. Let $\bH$ be a group scheme representing $\R ^2f_*\bmu_p$, let $\bU\subset\bH$ be the connected component of the identity, and let $\bD:=\bH/\bU$ be the component group, so that we have a short exact sequence
    \begin{equation}\label{eq:UHL}
        0\to\bU\to\bH\to\bD\to 0.
    \end{equation}
    We first consider the connected component $\bU$. We recall that the formal Brauer group $\widehat{\Br}(X)$ of $X$ is a 1-dimensional smooth formal group, hence isomorphic as a formal scheme to $\Spf k[[t]]$, and that the group structure on $\widehat{\Br}(X)$ is determined by a discrete invariant $h$, known as the \emph{height} $h$ of $X$, which is either an integer $1\leq h\leq 10$ or is $\infty$ (see \cite[18, \S3]{MR3586372}). Furthermore, taking cohomology of the Kummer sequence on $X$ yields the exact sequence
    \[
        0\to\Pic_{X}/p\to\R ^2f_*\bmu_p\to(\R ^2f_*\bG_m)[p]\to 0.
    \]
    The Picard scheme of a K3 surface is discrete, so taking completions at the identity yields isomorphisms
    \[
        \widehat{\bU}\simeq\widehat{\R^2f_*\bmu_p}\simeq\widehat{\Br}(X)[p].
    \]
    From this we can determine the isomorphism type of $\bU$ in terms of the height of $X$. Indeed, if $h<\infty$, then the group $\rH^2(X,\bmu_p)$ is finite. Thus, $\bU$ is a finite purely infinitesimal $k$-group scheme, and so by the above is isomorphic to the $p$-torsion of the formal group law of height $h$. For example, when $h=1$ we have $\widehat{\Br}(X)\simeq\widehat{\bG}_m$, so $\bU\simeq\bmu_p$, and when $h=2$ we have $\widehat{\Br}(X)\simeq\widehat{E}$ for a supersingular elliptic curve $E$, and therefore $\bU\simeq E[p]$. When $h=\infty$, we have $\widehat{\Br}(X)\simeq\widehat{\bG}_a$, which implies that $\widehat{\bU}\simeq\widehat{\bG}_a$ and therefore $\bU\simeq\bG_a$.
    
    \begin{rem}
        If $k$ is not algebraically closed then $\bU$ for a K3 surface with $h=\infty$ need only be a $k$-form of $\bG_a$. An example where this form is nontrivial is when $k$ is the generic point of the moduli space of lattice polarized supersingular K3 surfaces and $X$ is the universal family (see \cite[3.1.16, 3.5.10]{BL}). In particular, over non closed fields the isomorphism type of $\bU$ is not determined by the height.
    \end{rem}

    To complete our calculation we consider the component group $\bD$. If $h<\infty$ then $\bU(k)$ is trivial, so we have that $\bD(k)$ is isomorphic to $\bH(k)=(\R^2f_*\bmu_p)(k)\simeq\rH^2(X,\bmu_p)\simeq(\bZ/p)^{\oplus 22-2h}$. As $\bD$ is a finite \'{e}tale $k$-group this determines its structure as a group scheme. Noting that the reduction map gives a canonical splitting of~\eqref{eq:UHL}, we conclude that when $h<\infty$ we have a canonical isomorphism
    \[
        \bH\simeq\bU\times\left(\bZ/p\right)^{\oplus 22-2h}
    \]
    of group schemes, where $\bU$ is isomorphic to the $p$-torsion in the 1-dimensional formal group law of height $h$. When $h=\infty$, it follows from computations of Artin \cite[\S4]{ArtinSSK3} that $\bD(k)\simeq(\bZ/p)^{\oplus 22-2\sigma_0}$, where $\sigma_0$ is the Artin invariant of $X$ (a certain discrete invariant of $X$ satisfying $1\leq\sigma_0\leq 10$), and we conclude that $\bH$ sits in a short exact sequence of group schemes
    \[
        0\to\bG_a\to\bH\to\left(\bZ/p\right)^{\oplus 22-2\sigma_0}\to 0.
    \]
\end{example}

\begin{example}\label{ex:r^2 f_*}
    Let $f\colon X\to S$ be a smooth proper morphism of schemes of characteristic $p$ with geometrically connected fibers, let $q$ be a nonzero integer (possibly divisible by $p$), and assume that $\Pic_{X/S}[q]$ is flat over $S$. These hypothesis hold, for example, if $f$ is a family of K3 surfaces, abelian varieties, or Enriques surfaces. We then have $\R^0f_*\bmu _{q} = \bmu _{q}$, by our assumptions on the fibers of $f$, and by Kummer theory we have $\R^1f_*\bmu _{q}=\Pic_{X/S}[q]$. Applying \ref{cor:global rep corollary} with $\bG=\bmu_{q}$, we deduce that $\R^2f_*\bmu_{q}$ is representable.
\end{example}

    \begin{example}[Non-representable cohomology in families]\label{ex:non representable cohomology in families}
        Without the flatness assumption in Corollary \ref{cor:global rep corollary} the conclusion can fail. For an example with $\bG=\balpha_p$, consider a family $f\colon E\to S$ of elliptic curves. Suppose that the fiber $E_s$ is supersingular for some $s\in S$ whose complement is dense in $S$ and is ordinary otherwise. Taking cohomology of the defining exact sequence for $\balpha_p$ gives an exact sequence
            \[
                0\to\R^1f_*\balpha_p\to\R^1f_*\bG_a\xrightarrow{\F}\R^1f_*\bG_a\to\R^2f_*\balpha_p\to 0.
            \]
        The pushforward $\R^1f_*\bG_a$ is a one dimensional vector group over $S$, and the map $\F$ vanishes at $s$ and is invertible otherwise. Thus, $\R^1f_*\balpha_p$ is non-flat and supported only at $s$, and $\R^2f_*\balpha_p$ is not representable.
            
        For an example with $\bG=\bmu_p$, consider a family $f\colon X\to S$ of K3 surfaces. Then $\R^2f_*\bmu_p$ is representable (by Example \ref{ex:r^2 f_*}), but need not be flat over $S$. For instance, $\R^2f_*\bmu_p$ has relative dimension 0 over the finite height locus, and has relative dimension 1 over the supersingular locus. If the family $f$ involves a specialization from the finite height locus to the supersingular locus, then one can check that $\R^3f_*\bmu_p$ is indeed not representable.
    \end{example}


\providecommand{\bysame}{\leavevmode\hbox to3em{\hrulefill}\thinspace}
\providecommand{\MR}{\relax\ifhmode\unskip\space\fi MR }
\providecommand{\MRhref}[2]{%
  \href{http://www.ams.org/mathscinet-getitem?mr=#1}{#2}
}
\providecommand{\href}[2]{#2}

\end{document}